\documentclass[11pt,reqno,a4paper]{amsart}
\usepackage[english]{babel}
\usepackage[applemac]{inputenc}
\usepackage[T1]{fontenc}
\usepackage{palatino}
\usepackage{amsfonts}
\usepackage{amsmath}
\usepackage{amssymb}
\usepackage{amsthm}
\usepackage{graphicx}
\usepackage{wrapfig}
\usepackage{type1cm}
\usepackage[bf]{caption}
\usepackage{esint}
\usepackage{version}
\usepackage[active]{srcltx}  % for the inverse search
\usepackage[usenames]{color}
\usepackage{caption}
\usepackage{tikz}
\usepackage{wasysym}
\usepackage{verbatim}
\usepackage{psfrag}

% [[enviroments]]

% Note

% [[commands]]

\newcommand{\R}{\mathbb{R}}
\newcommand{\Rd}{\mathbb{R}^{d\times d}}

\newcommand{\N}{\mathbb{N}}

\newcommand{\cM}{\mathcal{M}}
\newcommand{\cH}{\mathcal{H}}

\newcommand{\cE}{\mathcal{E}}
\newcommand{\cF}{\mathcal{F}}

\newcommand{\cG}{\mathcal{G}}
\newcommand{\cC}{\mathcal{C}}

\newcommand{\cU}{\mathcal{U}}
\newcommand{\cA}{\mathcal{A}}

\newcommand{\eps}{\varepsilon}

\renewcommand{\epsilon}{\varepsilon}
\newcommand{\e}{\varepsilon}
\renewcommand{\rho}{\varrho}
\renewcommand{\phi}{\varphi}

\newcommand{\w}{\wedge}

\newcommand{\p}{\mathtt{p}}

\newcommand{\bA}{\mathbf{A}}
\newcommand{\bB}{\mathbf{B}}
\newcommand{\qtext}[1]{\quad\text{#1}}
\newcommand{\interior}{\textrm{interior}}

\newcommand{\emdef}{\emph}

% [[theorems]]

\theoremstyle{plain}
\newtheorem{thm}{Theorem}
\newtheorem{theorem}[thm]{Theorem}
\newtheorem{lemma}[thm]{Lemma}
\newtheorem{proposition}[thm]{Proposition}

\newtheorem{corollary}[thm]{Corollary}

\newtheorem*{claim*}{Claim}

\theoremstyle{definition}
\newtheorem{definition}{Definition}
\newtheorem{example}{Example}

\newtheorem*{examples*}{Examples}
\newtheorem*{example*}{Example}
\newtheorem{remark}{Remark}

\newtheorem*{notations*}{Notations}
\newtheorem*{notation*}{Notation}

% [[numbering]]

\numberwithin{equation}{section}
\numberwithin{thm}{section}
%\numberwithin{lemma}{section}
%\numberwithin{proposition}{section}
%\numberwithin{cor}{section}
\numberwithin{claim}{section}
\numberwithin{definition}{section}
\numberwithin{conjecture}{section}
\numberwithin{example}{section}
\numberwithin{remark}{section}
\numberwithin{notations}{section}
\numberwithin{notation}{section}

% [[geometry]]

\addtolength{\hoffset}{-1.15cm}
\addtolength{\textwidth}{2.3cm}
\addtolength{\voffset}{0.45cm}
\addtolength{\textheight}{-0.9cm}

\pagestyle{headings}

% [[title]]

\author{De-Jun Feng}
\address{
Department of Mathematics,
The Chinese University of Hong Kong,
Shatin,  Hong Kong}

\email{djfeng@math.cuhk.edu.hk}

\author{Pablo Shmerkin}
\address{Torcuato Di Tella University, Buenos Aires, Argentina}
\email{pshmerkin@utdt.edu}
\subjclass[2010]{Primary 37C45, 37D35, 37H15, Secondary 28A80}
\keywords{topological pressure, self-affine sets, singularity dimension, sub-additive thermodynamic formalism}

\thanks{The research of D.J. Feng  was partially supported by the RGC grant in CUHK; the research of P. Shmerkin  was partially supported by a Leverhulme Early Career Fellowship and by grant PICT 2011-0436 (ANPCyT)}
\title[Continuity of pressure for matrix cocycles]{Non-conformal repellers and the continuity of pressure for matrix cocycles}

% [[document]]

\begin{document}

\begin{abstract}
The pressure function $P(\bA, s)$ plays a fundamental role in the calculation of the dimension of ``typical'' self-affine sets, where $\bA=(A_1,\ldots, A_k)$ is the family of linear mappings in the corresponding generating iterated function system. We prove that this function
depends continuously on $\bA$. As a consequence, we show that the dimension of  ``typical'' self-affine sets is a continuous function of the defining maps. This resolves a folklore open problem in the community of fractal geometry. Furthermore we extend the continuity result to more general sub-additive pressure functions generated by the norm  of matrix products or generalized singular value functions for matrix cocycles, and obtain applications on the continuity of equilibrium measures and the Lyapunov spectrum of matrix cocycles.
\end{abstract}

\maketitle

\section{Introduction and statement of main results}

\label{sec:introduction}

\subsection{Continuity of pressure for self-affine systems}

The topological pressure plays a key role in the dimension theory of dynamical systems. Indeed, there is a powerful heuristic principle, going back to Bowen and Ruelle \cite{Bowen79, Ruelle82}, that says that the dimension of a set $X$ invariant under a conformal map $f$ can be calculated as the (often unique) number $s$ satisfying $P(-s \log \|Df\|)=0$, where $Df$ denotes an appropriate notion of derivative for $f$. Although this formula has been shown to work in a very wide variety of settings (see, e.g., \cite{Barreira96}), the assumption that $f$ is conformal (in some, possibly weak, sense) cannot be dispensed with (in the case in which $f$ is hyperbolic, $f$ needs to be conformal on unstable leaves). We note that under quite general assumptions, in the conformal world the topological pressure $P(g)$ is a continuous functional of the function $g$ (in the appropriate topology), and thus the Hausdorff dimension of invariant sets varies continuously with the dynamics.

The dimension theory of non-conformal dynamical systems does not admit such a general principle. Nevertheless, different notions of pressure have been defined, which often give an upper bound for the dimension of the set, and sometimes give the right dimension in ``typical'' situations. However, as soon as one leaves the conformal situation, the norm of the derivative does not give sufficient information anymore, and one needs to consider finer geometric information given by the singular values of the Jacobian. Recall that if $A\in\R^{d\times d}$, the singular values $\alpha_1(A)\ge \cdots\ge \alpha_d(A)$ are the square roots of the eigenvalues of $A^* A$. Alternatively, they are the lengths of the semi-axes of the ellipsoid $A(B(0,1))$, where $B(0,1)$ is the unit ball in $\R^d$.

Before defining a general notion of non-conformal pressure, we start by considering an important special case that motivated this work. Fix an ambient dimension $d\ge 1$, an integer $k\ge 2$, and set
\begin{align*}
\cA_{d,k}&=\{ {\bf A}=(A_1,\ldots,A_k):A_i\in \R^{d\times d} \},\\
\cA^C_{d,k}&=\{ {\bf A}=(A_1,\ldots,A_k):A_i\in \R^{d\times d}, \|A_i\|<1\},\\
\cG_{d,k}&=\{ {\bf A}=(A_1,\ldots,A_k):A_i\in GL_d(\R) \},\\
 \cG^C_{d,k}&=\{ {\bf A}=(A_1,\ldots,A_k):A_i\in GL_d(\R), \|A_i\|<1\},
\end{align*}
where $GL_d(\R)$ denotes the collection of all  $d\times d$ invertible real matrices,  $\|\cdot\|$ denotes the standard Euclidean norm (the superscript $C$ stands for ``contraction'').
Given $\bA\in\cA^C_{d,k}$ and translations $t=(t_1,\ldots, t_k)\in \R^{kd}$, it is well-known that there exists a unique nonempty compact set $F=F(\bA,t)$ such that
\[
F = \bigcup_{i=1}^k A_i(F)+t_i.
\]
Such set is called a {\em self-affine set}. We note that if the pieces $A_i(F)+t_i$ do not intersect, then $F$ can be seen as a repeller for a piecewise affine expanding dynamical system, but the definition makes sense even if the pieces do intersect substantially.

Finding an exact general formula for the Hausdorff or box counting dimension of $F$ is considered to be an untractable problem. However, Falconer \cite{Falconer88} found an appropriate topological pressure equation for which the zero is always an upper bound for the dimension, and in many cases it is equal to the dimension. To state his result, for $0\le s\le d$, we define the {\em singular value function} $\phi^s:\R^{d\times d}\to [0,\infty)$ as
\[
\phi^s(A) = \alpha_1(A)\cdots\alpha_m(A)\alpha_{m+1}(A)^{s-m},
\]
where $m=\lfloor s\rfloor$. Here we make the convention $0^0=1$.  For completeness, if $s>d$, then we also define
\[
\phi^s(A) = |\det(A)|^{s/d}.
\]
The singular value function is sub-multiplicative, i.e. $\phi^s(AB)\le \phi^s(A)\phi^s(B)$; see \cite[Lemma 2.1]{Falconer88}. For $\bA\in\cA_{d,k}$, define
\[
P(\bA,s) = \lim_{n\to\infty} \frac{1}{n}\log\left( \sum_{|\mathbf{i}|=n} \phi^s(A_{\mathbf{i}}) \right) =\inf_n\frac{1}{n}\log\left( \sum_{|\mathbf{i}|=n} \phi^s(A_{\mathbf{i}}) \right) \in [-\infty,\infty),
\]
where $A_{\mathbf{i}}:=A_{i_1}\cdots A_{i_n}$ for $\mathbf{i}=i_1\cdots i_n$. (The existence of the limit and the second equality follow from the sub-multiplicativity of $\phi^s$.) Furthermore for $\bA\in\cA_{d,k}^C$, define
$$
s(\bA)=\inf\{s\geq 0: P(\bA,s)\leq 0\}.
$$
We call $s(\bA)$ the {\it singularity dimension} of $F(\bA,t)$. If $\bA\in \cG_{d,k}^C$, then $s(\bA)$ is the unique positive number $s$ such that $P(\bA,s)= 0$ (cf. \cite{Falconer88}).

For a set $F\subset \R^d$, let $\overline{\dim}_B (F)$, $\dim_B(F)$ and $\dim_H(F)$ denote the upper box counting dimension, the box counting dimension
 and the Hausdorff dimension of $F$, respectively (cf. \cite{Falconer03}).
We can now state Falconer's Theorem.
\begin{theorem}[{\cite[Theorem 5.3]{Falconer88}}]
\label{Falconer-thm}
Let $\bA\in\cA_{d,k}^C$.  Then  the following holds:

\begin{enumerate}
\item $\overline{\dim}_B(F(\bA,t))\le  \min(s(\bA),d)$ for all $t\in\R^{kd}$.
\item If $\|A_i\|<1/2$ for all $i$, then
\[
\dim_H(F(\bA,t)) = \dim_B(F(\bA,t))  = \min(s(\bA),d)
\]
for Lebesgue almost all $t\in\R^{kd}$.
\end{enumerate}
\end{theorem}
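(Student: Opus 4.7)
The proof splits naturally into the two parts: (1) is an upper bound via an explicit covering argument, while (2) is a lower bound via a potential-theoretic argument in symbolic space combined with a transversality estimate that crucially uses $\|A_i\| < 1/2$.

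For Part (1), iterating the self-similarity identity gives $F(\bA,t) \subseteq \bigcup_{\ii \in \Sigma_n}(A_\ii F(\bA,t) + t_\ii^*)$, where $\Sigma_n = \{1,\ldots,k\}^n$ and $t_\ii^* = \sum_{m=1}^n A_{i_1\cdots i_{m-1}} t_{i_m}$. Since $F(\bA,t)$ is bounded, each piece lies inside an ellipsoid with semi-axes comparable to the singular values $\alpha_1(A_\ii),\ldots,\alpha_d(A_\ii)$. For $s = m+\tau$ with $m = \lfloor s\rfloor$, such an ellipsoid can be covered by $C_d\prod_{j\le m}(\alpha_j(A_\ii)/\alpha_{m+1}(A_\ii))$ balls of radius $\alpha_{m+1}(A_\ii)$, contributing $\lesssim \phi^s(A_\ii)$ to the $s$-dimensional Hausdorff covering mass. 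Since $\sum_{|\ii|=n}\phi^s(A_\ii) \to 0$ exponentially for any $s > s(\bA)$, this gives $\dim_H(F(\bA,t)) \le s$. A standard stopping-time trick (stop each $\ii$ the first time $\alpha_{m+1}(A_\ii)$ drops below a threshold $\delta$, yielding balls of common scale $\sim\delta$) upgrades the estimate to upper box dimension. Letting $s \searrow s(\bA)$ and using $F \subseteq \R^d$ gives the bound $\min(s(\bA),d)$.

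For Part (2), by (1) and $\dim_H \le \dim_B$, only the lower bound $\dim_H(F(\bA,t)) \ge \min(s(\bA),d)$ for Lebesgue-a.e.\ $t$ is needed. Fix $0 < s < \min(s(\bA),d)$ and consider the symbolic coding $\pi_t:\Sigma = \{1,\ldots,k\}^{\N} \to F(\bA,t)$, $\pi_t(\ii) = \sum_{n \ge 1} A_{i_1\cdots i_{n-1}} t_{i_n}$. Construct a probability measure $\mu$ on $\Sigma$ satisfying $\mu([\omega]) \lesssim \phi^s(A_\omega) e^{-n P(\bA,s)}$ for every $\omega$ of length $n$ (such $\mu$ comes from a sub-additive equilibrium state for the potential $\log\phi^s$; alternatively, a Bernoulli measure with suitable weights plus Cauchy--Schwarz works with worse constants). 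Push forward to $\mu_t := (\pi_t)_*\mu$ on $F(\bA,t)$. By Frostman's energy criterion, $\dim_H(F(\bA,t)) \ge s$ on $\{t : I_s(\mu_t) < \infty\}$, where $I_s(\mu_t) = \iint |\pi_t(\ii)-\pi_t(\jj)|^{-s} d\mu(\ii) d\mu(\jj)$, so by Fubini it suffices to verify $\int_{[-R,R]^{kd}} I_s(\mu_t) \, dt < \infty$.

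The crux is the transversality estimate $\int_{[-R,R]^{kd}} |\pi_t(\ii) - \pi_t(\jj)|^{-s} dt \lesssim \phi^s(A_\omega)^{-1}$, where $\omega = \ii \wedge \jj$ has length $n$. Decompose $\pi_t(\ii) - \pi_t(\jj) = A_\omega\xi(t)$ with $\xi(t) = \pi_t(\sigma^n\ii) - \pi_t(\sigma^n\jj)$ linear in $t$, isolate the dependence on the translations $t_{i_{n+1}}, t_{j_{n+1}}$ at the first differing symbols, and perform a linear change of variables whose Jacobian is bounded below by a geometric-series estimate exploiting $\sum_{k \ge 1}\|A_i\|^k < \sum_{k \ge 1}(1/2)^k = 1$; this is precisely where $\|A_i\| < 1/2$ is invoked (the sharp $1/2$ threshold, due to Solomyak, refines Falconer's original $1/3$ bound via a more delicate argument). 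The integral reduces to $\int_{|v|\le C'R} |A_\omega v|^{-s} dv$, which is $\lesssim \phi^s(A_\omega)^{-1}$ for $s < d$ by a singular-value-decomposition computation. Combining with the choice of $\mu$,
\[
\int I_s(\mu_t) \, dt \lesssim \sum_n \sum_{|\omega|=n} \mu([\omega])^2 \phi^s(A_\omega)^{-1} \lesssim \sum_n \sum_{|\omega|=n} \phi^s(A_\omega) e^{-2n P(\bA,s)} \lesssim \sum_n e^{-n P(\bA,s)} < \infty,
\]
the last step using $\sum_{|\omega|=n} \phi^s(A_\omega) \lesssim e^{n P(\bA,s)}$ and $P(\bA,s) > 0$ for $s < s(\bA)$. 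The main obstacle is the transversality estimate itself: the change-of-variables step needs careful linear-algebraic bookkeeping, and both the sharp $1/2$ contraction threshold and the singular-value integral estimate require nontrivial work; moreover, guaranteeing a measure $\mu$ with the stated Gibbs-type upper bound demands the machinery of sub-additive thermodynamic formalism (for which continuity of $P(\bA,s)$ in $\bA$, the main theme of the paper, becomes an independent technical issue).
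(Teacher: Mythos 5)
This theorem is not proved in the paper at all: it is imported as background, cited to Falconer \cite{Falconer88} with the remark that Solomyak \cite{Sol98} improved the contraction threshold from $1/3$ to $1/2$ and that invertibility of the $A_i$ is not needed. So there is no in-paper argument to compare against; what you have written is a reconstruction of the Falconer--Solomyak proof itself, and as a sketch it is faithful to that source: the covering/stopping-time argument for the upper box bound, the reduction of part (2) to a Frostman energy estimate for the pushforward $\mu_t=(\pi_t)_*\mu$, the factorization $\pi_t(\ii)-\pi_t(\jj)=A_{\ii\wedge\jj}\bigl(\pi_t(\sigma^n\ii)-\pi_t(\sigma^n\jj)\bigr)$, the change of variables in the difference $t_{i'_1}-t_{j'_1}$ of the translations at the first differing symbols (which is exactly how the two geometric series of Falconer's $1/3$ argument collapse to one, giving $\sum_{k\ge1}\|A_i\|^k<1$ under $\|A_i\|<1/2$), and the singular-value estimate $\int_{|v|\le R}|A_\omega v|^{-s}\,dv\lesssim \phi^s(A_\omega)^{-1}$ (cleanest for non-integer $s<d$, which is no loss).

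Two points in your final summation are stated incorrectly or too optimistically, though both are repairable. First, $\sum_{|\omega|=n}\phi^s(A_\omega)\lesssim e^{nP(\bA,s)}$ is the wrong direction: since $P(\bA,s)$ is the \emph{infimum} of $\frac1n\log\sum_{|\omega|=n}\phi^s(A_\omega)$, one only has $\sum_{|\omega|=n}\phi^s(A_\omega)\ge e^{nP(\bA,s)}$, with $\le e^{n(P(\bA,s)+\epsilon)}$ for large $n$. The fix is immediate: either use the $\epsilon$-version (harmless because $P(\bA,s)>0$), or better, apply the Gibbs-type bound to only one factor, writing $\mu[\omega]^2\phi^s(A_\omega)^{-1}\le C e^{-nP(\bA,s)}\mu[\omega]$ and summing $\sum_\omega\mu[\omega]=1$. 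Second, a measure satisfying the uniform bound $\mu[\omega]\le C\phi^s(A_\omega)e^{-nP(\bA,s)}$ for \emph{all} cylinders is not automatic from a sub-additive equilibrium state (Gibbs properties can fail in this generality); you either need an Egorov/Shannon--McMillan--Breiman argument restricting to a good set with $\epsilon$-losses, or Falconer's original route, which avoids thermodynamic formalism entirely: a Frostman-type (net measure) lemma on the code space produces, for any $s<s'<s(\bA)$, a measure with $\mu[\omega]\le C\phi^{s'}(A_\omega)$, and the gap $s'-s$ makes the energy sum converge via $\phi^{s'}(A_\omega)\le\phi^{s}(A_\omega)\|A_\omega\|^{s'-s}$. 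With either repair the argument closes, so these are slips in the write-up rather than gaps in the approach.
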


We remark that Falconer proved the second part under the assumption $\|A_i\|<1/3$ and $A_i\in GL_d(\R)$. Solomyak \cite{Sol98} later pointed out a modification in the proof that allows us to replace $1/3$ by $1/2$. By an observation of Edgar \cite{Edg92}, $1/2$ is optimal. The invertibility of the $A_i$ is not needed and Falconer's proof goes through without this assumption. Although Falconer's theorem holds only for generic translations, in several later works it was shown that the singularity dimension equals the box counting or the Hausdorff dimension for several concrete classes of self-affine sets, see \cite{Falconer92, HueterLalley95, KaenmakiShmerkin09}. This further highlights the significance of the singularity dimension.

In light of Theorem \ref{Falconer-thm}, the question of whether $s(\bA)$ and $P(\bA,s)$ depend continuously on $\bA$ arises naturally. Upper semi-continuity of $P(\cdot, s)$ and $s(\cdot)$ follows immediately from the definition (as $P(\cdot,s)$ is an infimum of continuous functions). Falconer and Sloan \cite{FalconerSloan09} proved the continuity of $P(\bA,s)$ in a number of special cases; in particular, they proved it under a rather strong irreducibility assumption on $\bA$ and, in the special case $s=1$, for upper triangular systems. Falconer and Sloan raised explicitly the question of continuity in general (though the problem has been known to the fractal geometry community long before that). In this article we resolve this problem.

\begin{theorem} \label{thm:continuity-pressure}
\begin{itemize}
\item[(1)]
For any $s\geq 0$, the map $\bA\to P(\bA,s)$ is continuous on $\cA_{d,k}$.
\item[(2)]The map $\bA\to s(\bA)$ is continuous on $\cA_{d,k}^C$.
\item[(3)]Moreover, the map $(\bA, s)\to P(\bA,s)$ is continuous at each point $(\bA,s)\in \cA_{d,k}\times [0,\infty)$ with $\bA\in \cG_{d,k}$ or $s\not\in \{0, 1,\ldots, d-1\}$.
\end{itemize}
\end{theorem}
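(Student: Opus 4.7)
Upper semi-continuity of $\bA \mapsto P(\bA, s)$ is immediate: each map $\bA \mapsto \frac{1}{n}\log \sum_{|\ii|=n} \phi^s(A_\ii)$ is continuous in $\bA$ (the singular values $\alpha_i(A)$ being continuous in $A$), and $P(\bA, s)$ equals their infimum over $n$ by sub-multiplicativity. Hence the entire content lies in lower semi-continuity. Part (2) follows from (1) via the strict monotonicity of $s \mapsto P(\bA, s)$ on $\cA_{d,k}^C$ and a standard implicit-function argument, so I focus on (1) and the joint statement in (3).

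The main engine is the sub-additive variational principle of Cao--Feng--Huang,
\[
P(\bA, s) = \sup_\mu \bigl\{ h(\mu) + \Lambda_s(\bA, \mu) \bigr\},
\]
where $\mu$ ranges over shift-invariant probability measures on $\{1,\ldots,k\}^{\N}$, $h(\mu)$ is the measure-theoretic entropy, and $\Lambda_s(\bA, \mu) = \lim_n \frac{1}{n}\int \log\phi^s(A_\ii)\,d\mu$. Fix $\bA \in \cA_{d,k}$, a sequence $\bA' \to \bA$, and $\epsilon > 0$. Choose an ergodic measure $\mu$ with $h(\mu) + \Lambda_s(\bA, \mu) > P(\bA, s) - \epsilon$. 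The target is a measure $\mu'$ (depending on $\bA'$) with $h(\mu') + \Lambda_s(\bA', \mu') > P(\bA, s) - 2\epsilon$ for $\bA'$ close enough to $\bA$.

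To produce $\mu'$ I would induct on $d$. The case $d=1$ is trivial since $\phi^s(A)=|A|^s$. For $d>1$, apply the Oseledets theorem to $(\bA,\mu)$ to obtain Lyapunov exponents $\chi_1 \geq \cdots \geq \chi_d$ and the identity $\Lambda_s(\bA,\mu) = \chi_1+\cdots+\chi_m+(s-m)\chi_{m+1}$, where $m=\lfloor s\rfloor$. If there is a spectral gap $\chi_m>\chi_{m+1}$, one has a measurable dominated splitting of $\R^d$ at the ergodic level, and this splitting persists under small perturbations of $\bA$; restricting the cocycle to the two invariant subbundles --- each of dimension strictly less than $d$ --- and invoking the inductive hypothesis handles this case, with the H\"older interpolation identity $\phi^s = (\phi^m)^{1-\theta}(\phi^{m+1})^\theta$ for $s=m+\theta\in(m,m+1)$ transporting the argument through non-integer values of $s$.

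The chief obstacle is the no-gap case $\chi_m=\chi_{m+1}$, where small perturbations can in principle mix exponents (the Bochi--Ma\~n\'e--Viana phenomenon for Lyapunov exponents). I would circumvent this by approximating $\mu$ --- simultaneously in entropy and in Lyapunov spectrum --- by a measure $\nu$ supported on a Katok horseshoe for $(\bA,\mu)$. Since $\nu$ sits essentially on finitely many hyperbolic periodic orbits, the quantities $\int \log\phi^s(A_\ii)\,d\nu$ depend continuously on $\bA$ through finitely many bounded matrix products, and hence $\Lambda_s(\bA',\nu) \to \Lambda_s(\bA,\nu)$ as $\bA'\to\bA$. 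Setting $\mu':=\nu$ (with $\nu$ tuned to an $\epsilon$-level of approximation) then delivers the desired lower bound, closing the inductive step. Joint continuity in (3) is obtained by following the same argument with $(\bA,s)$ varied together: for $\bA\in\cG_{d,k}$ all singular values of every $A_\ii$ are bounded below, making $\phi^s$ locally Lipschitz in $s$; for $s\notin\{0,1,\ldots,d-1\}$ the factor $\alpha_{m+1}^{s-m}$ with $s-m\in(0,1)$ provides H\"older regularity in $A$, which suppresses the integer-rank pathologies that would otherwise destroy joint continuity.
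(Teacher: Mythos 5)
There is a genuine gap, and it sits exactly where the real difficulty of the theorem lies. In your ``spectral gap'' case you assert that the Oseledets splitting associated to the single ergodic measure $\mu$ is a \emph{dominated} splitting that ``persists under small perturbations of $\bA$''. This is not justified: a gap $\chi_m>\chi_{m+1}$ in the Lyapunov exponents of one ergodic measure produces only a \emph{measurable} splitting, and measurable Oseledets splittings are not robust --- robustness is precisely what uniform domination (an invariant cone field with uniform contraction of angles) would give, and that is not implied by an exponent gap for $\mu$. Moreover, even granting a continuation of the splitting, the restricted cocycles are merely measurable (not locally constant), so your inductive hypothesis, stated for $\cA_{d,k}$, does not apply to them, and the perturbed cocycle $\bA'$ does not preserve the unperturbed subbundles in any case. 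In the ``no-gap'' case the decisive claim --- that $\Lambda_s(\bA',\nu)\to\Lambda_s(\bA,\nu)$ because $\nu$ ``sits essentially on finitely many hyperbolic periodic orbits'' --- proves too little: a measure carried by finitely many periodic orbits has zero entropy, so it cannot approximate $h(\mu)$, while for a positive-entropy measure on a horseshoe the continuity of $\Lambda_s(\cdot,\nu)$ is not automatic; it requires a \emph{uniform} cone/domination structure on the horseshoe for the relevant exterior powers, which is exactly the statement you have not established. So both branches of your induction rest on an unproved robustness statement, which is the heart of the matter (continuity of Lyapunov-type quantities under perturbation is false in general, as you yourself note).

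For comparison, the paper does not induct on dimension and never needs continuity of $\Lambda_s(\cdot,\nu)$ for a fixed measure. After choosing a near-optimal ergodic $\mu$ via the sub-additive variational principle, it combines Oseledets' theorem with Egorov and Khintchine's recurrence theorem to extract, for infinitely many $n$ (a set with bounded gaps), a family $\Delta_n$ of freely concatenable words of length $n$ carrying $\mu$-measure $\ge\eta/2$ (hence $\#\Delta_n\gtrsim e^{n(h_\mu-\e)}$), on which the exterior-power cocycles $A^{\wedge t_r}$ satisfy a cone condition. The cone condition is an open condition in $L^\infty$, so it survives perturbation to $B$ near $A$, and it yields almost-multiplicativity of norms along concatenations (Lemma \ref{lem:cone-implies-almost-multiplicative}); one then lower-bounds the partition sums of $B$ over concatenations of words in $\Delta_n$ directly, recovering $P(B,s)\ge P(A,s)-O(\e)$. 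In effect the paper builds the ``horseshoe with uniform domination for all relevant exterior powers'' whose existence your sketch implicitly assumes; supplying that construction (and the accompanying almost-multiplicativity estimate) is what your argument is missing.
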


\begin{remark}
The map $P(\cdot, \cdot)$ has some discontinuity points $(\bA, s)$ with $\bA\not\in \cG_{d,k}$ and $s\in \{1,\ldots, d-1\}$. For instance let $s\in \{1,\ldots, d-1\}$ and $\bA=(A, A,\ldots, A)$ for some non-invertible $d\times d$ matrix $A$  satisfying $\alpha_s(A)>0$ and $\alpha_{s+1}(A)=0$.  Then $(\bA,s)$ is a discontinuity point of $P$. Indeed, $P(\bA,s)\in \R$ and $P(\bA,t)=-\infty$ for any $t>s$.
\end{remark}

\subsection{Continuity of pressure for the norm of matrix products} Here we  present a related but in some sense simpler result concerning the norms of matrix products. Again let $\bA\in \cA_{d,k}$. Given $s>0$, define
\begin{equation*} \label{eq:def-matrix-pressure}
M(\bA,s) = \lim_{n\to\infty}\frac1n \log\left( \sum_{|\mathbf{i}|=n} \|A_{\mathbf{i}}\|^s \right).
\end{equation*}
The limit is easily seen to exist (and possibly equal $-\infty$) from sub-additivity.

The quantity $M(\bA,s)$ is rather natural. On one hand, it is closely linked to the Lyapunov exponent of an IID random matrix product (see, e.g. \cite[Chapter V]{BougerolLacroix85}), and to the Lyapunov spectrum of matrix products (cf. \cite{FengLau02, FengHuang10}). On the other hand, the limit as $s\to 0$  (which in the thermodynamic formalism is known as a ``zero temperature limit'') is the joint spectral radius of the matrices $(A_1,\ldots,A_k)$, which is an important quantity in a wide variety of fields, see for example \cite{HMST11} and references therein. Although the joint spectral radius is well-known to be continuous, it is far from clear from the definition whether $M(\bA,s)$ is always continuous. Nevertheless, similar to Theorem \ref{thm:continuity-pressure}, we have the following.

\begin{theorem} \label{thm:matrix-pressure-continuity}
$M(\bA,s)$  is  continuous on  $\cA_{d,k}\times [0,\infty)$.
\end{theorem}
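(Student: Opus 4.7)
The plan is to establish upper and lower semicontinuity of $M$ on $\cA_{d,k}\times[0,\infty)$ separately.

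\emph{Upper semicontinuity.} For each $N\ge 1$ the map $(\bA,s)\mapsto a_N(\bA,s):=\sum_{|\mathbf{i}|=N}\|A_\mathbf{i}\|^s$ is continuous on $\cA_{d,k}\times[0,\infty)$, under the conventions $0^0=1$ and $0^s=0$ for $s>0$. Hence $\tfrac{1}{N}\log a_N$ is upper semicontinuous with values in $[-\infty,\infty)$. Submultiplicativity $\|A_{\mathbf{i}\mathbf{j}}\|^s\le\|A_\mathbf{i}\|^s\|A_\mathbf{j}\|^s$ makes $\log a_N$ subadditive in $N$, so Fekete's lemma yields
$$M(\bA,s)=\inf_{N\ge 1}\tfrac{1}{N}\log a_N(\bA,s),$$
exhibiting $M$ as an infimum of USC functions, hence USC.

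\emph{Lower semicontinuity.} The starting point is the subadditive variational principle
$$M(\bA,s)=\sup_\mu\bigl(h(\mu)+s\Lambda(\mu,\bA)\bigr),$$
where $\mu$ ranges over shift-invariant Borel probabilities on $\{1,\dots,k\}^\N$, $h(\mu)$ is the Kolmogorov--Sinai entropy, and $\Lambda(\mu,\bA):=\lim_n\tfrac{1}{n}\int\log\|A_{x_1\cdots x_n}\|\,d\mu(x)$ is the top subadditive Lyapunov exponent. Given $\epsilon>0$ and a base point $(\bA_0,s_0)$, I choose an ergodic $\mu$ with $h(\mu)+s_0\Lambda(\mu,\bA_0)>M(\bA_0,s_0)-\epsilon$. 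By Shannon--McMillan--Breiman and Kingman's theorem, for $N$ sufficiently large there is a set $\cP\subseteq\{1,\dots,k\}^N$ of ``typical'' words with $|\cP|\ge e^{N(h(\mu)-\epsilon)}$ and $\log\|A_{0,\mathbf{i}}\|\ge N(\Lambda(\mu,\bA_0)-\epsilon)$ for every $\mathbf{i}\in\cP$. For the Bernoulli measure $\nu$ on $\cP^\N$ (viewed inside $\{1,\dots,k\}^\N$ in the natural way), $h(\nu)\ge h(\mu)-\epsilon$, and submultiplicativity gives the inequality $\Lambda(\nu,\bA)\le\tfrac{1}{N|\cP|}\sum_{\mathbf{i}\in\cP}\log\|A_\mathbf{i}\|$, whose right-hand side is continuous in $\bA$ on a neighbourhood of $\bA_0$ (note $\|A_{0,\mathbf{i}}\|>0$ for $\mathbf{i}\in\cP$).

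\emph{Main obstacle.} The heart of the matter is to match the submultiplicativity upper bound on $\Lambda(\nu,\bA)$ with a lower bound that is lower semicontinuous (ideally continuous) in $\bA$ and is tight at $\bA_0$ up to an error that is controlled uniformly in a neighbourhood. I anticipate handling this by induction on the dimension $d$: when $\bA_0$ is irreducible (no common invariant subspace), classical continuity theorems for the top Lyapunov exponent of i.i.d.\ matrix products (Furstenberg, Le~Page) yield continuity of $\Lambda(\nu,\cdot)$ at $\bA_0$ for the relevant Bernoulli measures; when $\bA_0$ admits a common nontrivial invariant subspace, the induced block-triangular decomposition reduces the computation of $M(\bA_0,s_0)$ to strictly lower-dimensional subcocycles for which the inductive hypothesis applies, and a separate perturbative argument is needed to match $M(\bA_n,s_n)$ for $\bA_n\to\bA_0$ (where $\bA_n$ need not preserve the subspace) to these block contributions. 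A structural simplification of the norm setting, compared with the singular-value pressure of Theorem~\ref{thm:continuity-pressure}, is that $\|\cdot\|=\alpha_1$ is globally Lipschitz, so the potential $\log\|A_\mathbf{i}\|$ suffers no rank-jump discontinuities: only the limiting (Lyapunov) behaviour causes trouble, and this is exactly what the irreducible/reducible dichotomy is designed to control.
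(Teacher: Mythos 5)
Your upper semicontinuity argument and the reduction of lower semicontinuity to the subadditive variational principle match the paper's starting point, but the proof stops exactly where the theorem's real content begins, and the route you sketch for that part does not go through. Two concrete problems. First, even at the base point $\bA_0$, selecting $\cP$ merely by Shannon--McMillan--Breiman and Kingman typicality does not give you $\Lambda(\nu,\bA_0)\gtrsim\Lambda(\mu,\bA_0)$: for a $\nu$-typical concatenation $\mathbf{i}_1\mathbf{i}_2\cdots$ of words from $\cP$ the norm $\|A_{0,\mathbf{i}_1}\cdots A_{0,\mathbf{i}_\ell}\|$ can be far smaller than $\prod_j\|A_{0,\mathbf{i}_j}\|$ (and can even vanish when the $A_i$ are not invertible), because nothing in your selection controls the alignment of expanding and contracting directions between consecutive blocks. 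Second, the irreducible/reducible dichotomy cannot supply the missing lower bound in the stated generality: the theorem is on $\cA_{d,k}$, so the matrices may be non-invertible, and the i.i.d.\ continuity theorems you invoke (Furstenberg, Le~Page; in full generality only Bocker--Viana in $d=2$, and only for $GL_2$) require invertibility and (strong) irreducibility hypotheses that need not pass to the sub-family $\{A_{0,\mathbf{i}}:\mathbf{i}\in\cP\}$ even when $\bA_0$ is irreducible; and at reducible points the top Lyapunov exponent of a \emph{fixed} measure is genuinely discontinuous, so no continuity statement for $\Lambda(\nu,\cdot)$ can hold there --- the compensation must come from trading exponent against entropy, and your ``separate perturbative argument'' for perturbations $\bA_n$ that destroy the invariant subspace is precisely the crux and is not supplied. (The paper itself notes that a Bocker--Viana-based argument works, but only for $d=2$ and invertible cocycles.)

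The missing idea in the paper is Theorem~\ref{thm:sub-cocycle-cone-condition}: using Oseledets' theorem together with Egorov and Khintchine recurrence, one chooses the words not just to be entropy/exponent-typical but to start and end where the Oseledets splitting is $\delta$-close to a fixed splitting, so that the exterior powers $A^{\w t_r}(x,n)$ map a fixed cone strictly inside itself. By Lemma~\ref{lem:cone-implies-almost-multiplicative} this yields almost-multiplicativity of $\|B(x,\ell n)\|$ along \emph{arbitrary} concatenations with a uniform constant $c$, and --- crucially --- the cone condition is an $L^\infty$-open condition, so the same lower bound $\|B(x,\ell n)\|\ge c^\ell e^{\ell n(\lambda_1-\e)}$ holds for every cocycle $B$ in a neighbourhood of $A$, with no irreducibility or invertibility assumptions. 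Summing over the $(\#\Delta_n)^\ell$ concatenations then gives $M(\bB,s')\ge M(\bA,s)+(s'-s)\lambda_1-o(1)$, which is the lower semicontinuity you need. Without an argument of this type (or a genuinely new substitute for the reducible, possibly non-invertible case), your proposal has a gap at its self-identified ``main obstacle.''
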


The related problem of continuity of Lyapunov exponents of linear cocycles (as a function of the cocycle and of the matrix) is an ongoing problem of great interest, partly because of the connection to Schr\"{o}dinger operators. In a sense, the pressure is a more robust quantity as it is defined ``topologically'', without reference to a specific measure (although it can also be defined in terms of measures, see Section \ref{subsec:TF} below).

\subsection{A more general statement}
\label{subsec:generalpressure}

Falconer's result (Theorem \ref{Falconer-thm}) has been generalized in many directions; we will review some of these later in Section~\ref{sec:generalizations}.  Many of these more general results rely on an appropriate, correspondingly more general, notion of non-conformal topological pressure; this immediately raises whether continuity of the pressure can be established in a more general framework. We present our general result here, deferring a detailed discussion of its applications to Section~\ref{sec:generalizations}.

Let $(X,T)$ be a subshift of finite type (cf. \cite{Bowen75}). Here and in the rest of the article, subshifts of finite type are assumed to be defined on a finite alphabet. Recall that any map $A:X\to \R^{d\times d}$ induces a matrix cocycle: for $x\in X$ and $n\in\N$,
\[
A(x,n) =  A(T^{n-1}x)\cdots A(x).
\]
% In this article we consider only the case in which $X$ is a subshift of finite type (and $T$ is the shift map on $X$).
We denote the collection of finite words allowed in $X$ by $X^*$, and the subset of $X^*$ of words of length $n$ by $X^*_n$. A matrix cocycle $A$ on $X$ is said to be \emdef{locally constant} if $A(x)$ only depends on the first coordinate of $x$. Locally constant cocycles are naturally identified with elements of $\cA_{d,k}$.

Let $C(X)$ denote the collection of real continuous functions on $X$.  Given $g\in C(X)$,  a  cocycle $A:X\to\R^{d\times d}$ and $s\ge 0$, we define
\[
P_g(A,s) = \lim_{n\to\infty} \frac{1}{n}\log\left( \sum_{\mathbf{i}\in X^*_n} \;\sup_{y\in [\mathbf{i}]} \exp(S_ng(y))\phi^s(A(y,n)) \right),
\]
where $S_ng(x):=\sum_{i=0}^{n-1}g(T^ix)$.
The limit can be easily seen to exist, and equal the infimum, by sub-multiplicativity of the expression between parenthesis. In the important special case  $g\equiv 0$, we write $P(A,s)$ instead of $P_g(A,s)$.

Let $\cM(X, d)$ denote the collection of all $d\times d$ matrix cocycles on $X$.   We can now state the main result of the paper.
\begin{theorem} \label{thm:general-result}
Let $X$ be a subshift of finite type and $g\in C(X)$. Then the following statements hold: \begin{itemize}
\item[(1)]For fixed  $s>0$, any  locally constant cocycle $A$ is a continuity point of the pressure map $B\to P_g(B,s)$ on $\cM(X, d)$, in the $L^\infty(X,\R^{d\times d})$ topology.
 \item[(2)] Let $A\in \cM(X, d)$ be  locally constant and $s\geq 0$. If  $s\not\in \{0, 1,\ldots, d-1\}$, or if
 $A$ takes values in $GL_d(\R)$, then $(A,s)$ is a continuity point of the  map $(B,t)\to P_g(B,t)$ on $\cM(X, d)\times [0,\infty)$, where on $\cM(X,d)$ we consider the $L^\infty(X,\R^{d\times d})$ topology.
 \end{itemize}
\end{theorem}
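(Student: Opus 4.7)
The upper semi-continuity half of both parts is immediate: writing
$$\Sigma_n(B, s) := \sum_{\ii \in X^*_n} \sup_{y \in [\ii]}\exp(S_ng(y))\, \phi^s(B(y, n)),$$
we have $P_g(B, s) = \inf_n \tfrac{1}{n} \log \Sigma_n(B, s)$, and each $\Sigma_n(\cdot, \cdot)$ is jointly continuous on $\cM(X, d) \times [0,\infty)$ in the $L^\infty$ topology, since $\phi^s$ is continuous in the matrix entries and in $s$, and the sum over $X^*_n$ is finite. An infimum of continuous functions is upper semi-continuous, giving the ``$\leq$'' direction in both (1) and (2). The remaining task is lower semi-continuity at the locally constant cocycle $A$.

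My strategy for the lower bound is \emph{not} to argue via lower semi-continuity of the sub-additive Lyapunov exponent $\Lambda^s(B, \mu)$ at a fixed $\mu$ (which in fact fails, since $\Lambda^s(\cdot,\mu)$ is itself upper semi-continuous), but rather to construct for each perturbation $B$ a concrete family of words witnessing a large $\Sigma_{Nn}(B, s)$. Precisely, I would fix $\eps > 0$, apply the sub-additive variational principle of Cao--Feng--Huang to select an ergodic measure $\mu$ with
$$h_\mu(T) + \int g\, d\mu + \Lambda^s(A, \mu) \geq P_g(A, s) - \eps,$$
and use Shannon--McMillan, Oseledets and Birkhoff applied to $(\mu, A)$ to produce, for large $n$, a set $G_n \subset X^*_n$ of ``good'' words satisfying $|G_n| \gtrsim e^{n(h_\mu - \eps)}$, $\phi^s(A_\ii) \gtrsim e^{n(\Lambda^s(A,\mu) - \eps)}$, and $S_n g \gtrsim n(\int g\, d\mu - \eps)$ on $[\ii]$ for each $\ii \in G_n$.

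The core technical step is a \emph{concatenation lemma} for $\phi^s$ on the locally constant cocycle $A$: for every $N$, there is a large sub-family of concatenations $\ii_1 * \cdots * \ii_N$ of good words (padded with bridging words of bounded length to respect the SFT constraint) such that $\phi^s\bigl(A_{\ii_1 * \cdots * \ii_N}\bigr) \geq c^N \prod_j \phi^s(A_{\ii_j})$. I would establish this through a Grassmannian argument placing the Oseledets subspaces of $A$ at $\mu$-typical points in generic relative position. Crucially, the lemma should be \emph{stable} under $L^\infty$ perturbations (with tolerance depending on $n$ but not on $N$): since $B(y, n)$ is close to $A_\ii$ for $y \in [\ii]$ when $\|B - A\|_{L^\infty}$ is small, and the concatenation estimate only uses Grassmannian positions preserved under small perturbation at scale $n$, the same concatenations give $\phi^s(B(y, Nn + O(N))) \geq (c/2)^N \prod_j \phi^s(A_{\ii_j})$ uniformly in $N$ for $B$ in a fixed neighborhood of $A$. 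Summing over these concatenations yields
$$\tfrac{1}{Nn}\log \Sigma_{Nn + O(N)}(B, s) \geq h_\mu + \int g\, d\mu + \Lambda^s(A, \mu) - C\eps - \tfrac{C\log N}{n},$$
and letting $N \to \infty$ gives $P_g(B, s) \geq P_g(A, s) - O(\eps)$.

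The main obstacle is the concatenation lemma together with its uniformity under perturbation. For integer $s = m$, the function $\phi^m = \|\wedge^m \cdot\|$ can degenerate when $\alpha_m(A_\ii) = \alpha_{m+1}(A_\ii)$ or when $\alpha_{m+1}(A_\ii) = 0$, and a perturbation of $A$ can flip the rank of $\wedge^m B$ in a non-robust way. The assumption in part (2) that $A \in GL_d(\R)$ or that $s \notin \{0, 1, \ldots, d-1\}$ rules out exactly these degeneracies: in the invertible case $\wedge^m A$ never drops rank, while for non-integer $s$ the interpolation $\phi^s = \|\wedge^m \cdot\|^{m+1-s}\,\|\wedge^{m+1}\cdot\|^{s-m}$ depends jointly continuously on $(B, s)$ near $A$. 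This is precisely what delivers the joint continuity in $(B, s)$ of part (2), while in part (1) the exponent $s$ is held fixed so no such degeneration in $s$ can occur.
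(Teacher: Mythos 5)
Your overall architecture coincides with the paper's: variational principle to pick $\mu$, Egorov/Shannon--McMillan/Oseledets to produce exponentially many good words of length $n$, an almost-super-multiplicativity statement along concatenations that is robust under $L^\infty$ perturbation with tolerance depending on $n$ but not on the number of blocks, and then a sum over concatenations. The upper semi-continuity half is fine (though $\Sigma_n(\cdot,\cdot)$ is only jointly \emph{upper} semi-continuous, not continuous, at integer $s$ when singular values vanish; that is all you need). The problem is that your ``concatenation lemma'' is precisely the entire difficulty of the theorem, and the sketch of how you would prove it does not address the two points where a naive Grassmannian/transversality argument breaks down. First, if $m=\lfloor s\rfloor$ (or $m+1$) lies strictly inside an Oseledets block, i.e.\ $\lambda_m=\lambda_{m+1}$, there is no spectral gap at that index, hence no domination and no ``generic relative position'' argument for $\wedge^m$: the top exponent of $A^{\wedge m}$ has multiplicity, and almost-multiplicativity of $\|\wedge^m\cdot\|$ along typical blocks is not something you can extract from positioning Oseledets subspaces. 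The paper only proves almost-multiplicativity at the block boundaries $t_r=d_1+\cdots+d_r$, where a gap does exist, by constructing invariant cones for $A^{\wedge t_r}$ (Khintchine recurrence into a neighbourhood of a fixed splitting, combined with a fixed first symbol so that good words concatenate without bridges), and then recovers $\phi^s$ via the interpolation inequality $\phi^s(B)\ge\bigl(\phi^{t_r}(B)\bigr)^{(t_{r+1}-s)/d_{r+1}}\bigl(\phi^{t_{r+1}}(B)\bigr)^{(s-t_r)/d_{r+1}}$ together with the Egorov upper bounds at level $t_r$. Your sketch contains no substitute for this step.

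Second, the $N$-uniform stability you assert (``Grassmannian positions preserved under small perturbation at scale $n$'') needs a propagation mechanism: after multiplying $N$ perturbed blocks $B(y,n)$ there is no a priori reason the relevant subspaces remain in the good region, and the constant $c$ must not deteriorate with $N$ or with $B$. In the paper this is exactly what the cone condition delivers: the cones are fixed, the condition is open and checked only on the finitely many words of $\Delta_n$, and Lemma~\ref{lem:cone-implies-almost-multiplicative} then gives $c$ depending only on the cones. Relatedly, your bridging-word device is unsafe in the generality of part (1), where the cocycle may be non-invertible: a bridge matrix can drop rank and annihilate $\phi^s$ of the whole product, which is why the paper arranges directly concatenable words instead. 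Finally, your explanation of where the hypotheses of part (2) enter is not quite the real mechanism: for non-integer $s$ the point is the strict inequalities $t_r<s<t_{r+1}$, which leave room to run the same lower bound for nearby $s'$ with an error $(s'-s)\lambda_{r+1}$; for invertible $A$ one gets a Lipschitz-in-$s$ estimate for $P_g(B,\cdot)$, uniform over a neighbourhood of $A$, from uniform two-sided bounds on singular values, and combines it with part (1). As written, then, the proposal reproduces the paper's skeleton but leaves its central lemma unproven, and the route suggested for it would fail in the equal-exponent and non-invertible cases that the theorem must cover.
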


We emphasize that, whenever $A$ is locally constant,  $P(B,s)$ is close to $P(A,s)$  if $B$ is uniformly close to $A$, even if $B$ is not locally constant or even discontinuous. %non-measurable?

We remark that the quantities $P(\bA,s)$, $M(\bA,s)$ and $P_g(A,s)$ are particular cases of topological pressures for sub-additive potentials; see Section~\ref{sec:sub-proofs} for details. The proofs of our continuity results make heavy use of dynamical systems theory, and in particular the sub-additive thermodynamic formalism. Moreover, since the topological pressure is a key component of the thermodynamic formalism, our results have applications to the continuity of other objects of interest, such as equilibrium measures, their entropies, and the Lyapunov spectrum of matrix cocycles. These will be presented in Section~\ref{sec:generalizations}.

The paper is structured as follows.  Section~\ref{sec:linear-algebra} contains some  preliminaries on linear algebra and the cone condition, which will be needed in our proofs. In Section~\ref{sec:cone-condition}, we  construct a ``large'' subsystem of a matrix cocycle which satisfies the cone condition.  Section~\ref{sec:sub-proofs} contains background on the variational principle for the sub-additive topological pressure, and the proofs of our main results. Further generalizations, applications and remarks are given in Section~\ref{sec:generalizations}.

\section{Linear algebraic preliminaries and the cone condition}

\label{sec:linear-algebra}

\subsection{Cones and the cone condition}

The proofs of our main results will be based on finding a sub-system of (an iteration of) the original system which satisfies the cone condition. In this section we deal with elementary linear-algebraic facts related to this. Although most of the material is standard, proofs are given for the convenience of the reader, and since it is difficult to trace down the exact statements in the literature.

A \emdef{cone} $K$ in a finite-dimensional Banach space is a nonempty, convex, closed subset such that $t v\in K$ whenever $t>0, v\in K$, and $K\cap -K=\{0\}$.

\begin{definition}
Let $A:X\to\R^{d\times d}$ be a map generating a linear cocycle. We say that $A$ satisfies the \emdef{cone condition} with cones $K, K'$, if  $K'\setminus\{0\}\subset \interior(K)$, and $A(x)K\subset (K'\cup -K')$ for all $x\in X$.
\end{definition}

The importance of the cone condition for us comes from the following fact.
\begin{lemma} \label{lem:cone-implies-almost-multiplicative}
Let $V$ be a finite dimensional Banach space, and let $K,K'\subset V$ be cones with $K'\setminus\{0\}\subset \interior(K)$. Then there is a constant $c=c(K,K')>0$ such that for any pair of linear maps $A_1, A_2$ such that $A_i K\subset (K'\cup -K')$ $(i=1,2)$,
\[
c \|A_1\| \| A_2\| \le \|A_1 A_2\| \le \|A_1\|\|A_2\|,
\]
{ where $\|A\|$ denotes the operator norm of $A$}.
\end{lemma}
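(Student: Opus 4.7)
The upper bound $\|A_1 A_2\| \le \|A_1\|\|A_2\|$ is just submultiplicativity of the operator norm and requires no cone hypothesis.

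For the lower bound, I plan a two-stage argument. \emph{Stage one} is a standard auxiliary estimate: for any proper cone $K$ with nonempty interior (which is forced here as soon as $K'\neq\{0\}$, since $K'\setminus\{0\}\subset\interior(K)$), there is a constant $c_0=c_0(K)>0$ such that
\[
\sup\bigl\{\|Av\|:v\in K,\ \|v\|=1\bigr\}\ge c_0\|A\|
\]
for every linear map $A:V\to V$. The proof uses that $K$ is generating: any $u\in V$ admits a decomposition $u=u_+-u_-$ with $u_\pm\in K$ and $\|u_\pm\|\le C_K\|u\|$ for a constant $C_K=C_K(K)$; picking $u$ with $\|Au\|$ close to $\|A\|$, decomposing, and normalizing whichever of $Au_\pm$ has larger norm yields the estimate with $c_0=1/(2C_K)$.

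\emph{Stage two} combines this with the cone condition. Apply stage one to $A_2$ to find $v\in K$ with $\|v\|=1$ and $\|A_2 v\|\ge c_0\|A_2\|$. By the cone hypothesis, $A_2 v\in K'\cup -K'$; flipping the sign of $v$ if necessary, we may assume $w:=A_2 v\in K'$. We then aim to show
\[
\|A_1 w\|\ge c_1 \|A_1\|\|w\|
\]
for some $c_1=c_1(K,K')>0$. Granted this, $\|A_1 A_2\|\ge \|A_1 A_2 v\|=\|A_1 w\|\ge c_0 c_1\|A_1\|\|A_2\|$, and the lemma holds with $c=c_0 c_1$.

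The crux, and the main obstacle, is the inequality $\|A_1 w\|\ge c_1\|A_1\|\|w\|$ for unit vectors $w\in K'$. The geometric input is that $K'\setminus\{0\}\subset\interior(K)$, combined with the compactness of the unit sphere of $K'$, yields a uniform $\delta=\delta(K,K')>0$ such that the Euclidean ball $B(\hat w,\delta)\subset K$ for every unit vector $\hat w\in K'$. Hence for every unit vector $u\in V$, both $\hat w\pm\delta u$ lie in $K$, so by the cone condition on $A_1$ their images $A_1(\hat w\pm\delta u)$ both lie in $K'\cup -K'$; the half-sum is $A_1\hat w$ and the half-difference is $\delta A_1 u$. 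Picking $u\in K\cap\{\|u\|=1\}$ so that $\|A_1 u\|\ge c_0\|A_1\|$ (stage one applied to $A_1$) and exploiting that $K'\cup -K'$ contains no nontrivial $2$-dimensional subspace (since $K'\cap -K'=\{0\}$), one forces $\|A_1\hat w\|$ to be comparable to $\|A_1\|$, with a constant depending only on $\delta$ and the geometry of $K,K'$. I expect the cleanest packaging is a compactness and continuity argument on the normalized class $\{A:\|A\|=1,\,AK\subset K'\cup -K'\}$: the desired minimum of $\|A\hat w\|$ over $(A,\hat w)$ in this class is attained, and the convex-geometric rigidity just described rules out the limiting case where the minimum is zero, yielding the uniform constant $c_1$.
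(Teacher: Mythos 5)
The upper bound and your stage one are fine, and your overall architecture --- reduce everything to a uniform estimate $\|A_1 w\|\ge c_1\|A_1\|\,\|w\|$ for $w\in K'$, and prove that estimate by compactness plus ruling out a degenerate limit --- is essentially the architecture of the paper's proof. (The paper proves exactly that uniform claim and applies it twice, to $A_2$ at $w$ and to $A_1$ at $A_2w$ up to sign, which gives $c=c_1^2$; so your stage one, while correct, is not actually needed. Also a small slip: you cannot literally ``flip the sign of $v$'', since $-v\notin K$ when $v\in K\setminus\{0\}$; what you mean is to replace $w$ by $-w$, which is harmless because norms are even.)

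The genuine gap is at your crux, and it cannot be closed in the form you propose. In the limiting case $\|A\|=1$, $A\hat w=0$, your half-sum/half-difference observation only shows that $Au\in K'\cup(-K')$ for \emph{every} unit $u$ (since $\hat w\pm\delta u\in K$ and $A\hat w=0$), i.e.\ the whole image $A(V)$ lies in $K'\cup(-K')$. But $K'\cup(-K')$ does contain one-dimensional subspaces (the line $\R v$ for any $v\in K'\setminus\{0\}$), so the rigidity you invoke (no two-dimensional subspace) gives no contradiction when $A$ has rank one. And this case genuinely occurs: in $\R^2$ take $K=\{(x,y):x\ge|y|\}$, $K'=\{(x,y):x\ge 2|y|\}$, $A(x,y)=(y,0)$; then $AK$ is the whole $x$-axis, which lies in $K'\cup(-K')$, $\|A\|=1$, yet $A(1,0)=0$ with $(1,0)\in K'$ --- and indeed $A_1=A_2=A$ violates the conclusion, since $A^2=0$. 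So with the union condition taken literally the crux (and the lemma itself) is false in this rank-one ``straddling'' case, and no argument could have rescued your sketch as stated. The paper's proof slides past exactly this point: in its compactness step it tacitly assumes $A_n(K)\subset K'$ rather than $K'\cup(-K')$. Under that hypothesis the degenerate limit really is impossible: if $\|A\|=1$, $w\in K'\setminus\{0\}\subset\interior(K)$ and $Aw=0$, pick $u\in K$ with $Au\ne 0$ and $w-u\in K$ (e.g.\ $u=\tfrac12 w+\e x_0$ with $Ax_0\ne0$ and $\e$ small); then $Au\in K'$ and $-Au=A(w-u)\in K'$, contradicting $K'\cap(-K')=\{0\}$. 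The reduction from the union condition to ``$A(K)\subset K'$ or $A(K)\subset -K'$'' is valid except precisely when the convex cone $A(K)$ is a line spanned by a vector of $K'$ (a connectedness argument on $(K'\cup -K')\setminus\{0\}$ shows a convex cone inside the union must otherwise lie in one component), i.e.\ except in the rank-one case above; and in the application the maps do send $K$ into a single one of $\pm K'$, by the eigenvalue-sign argument behind Lemma~\ref{lem:cone-condition} and Corollary~\ref{cor:block-diagonal-satisfies-cone}. So the correct repair of your proof is to add (or derive, outside the rank-one straddling case) the hypothesis that each $A_i(K)$ is contained in $K'$ or in $-K'$; with that, your compactness packaging closes exactly as in the paper.
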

\begin{proof}
This is elementary and most likely known, but we have not been able to find a reference so a proof is given for the reader's convenience. We only need to prove the left-hand inequality.

We first claim that there exists $c_1>0$ such that for any linear map $A:V\to V$ such that $A(K)\subset (K'\cup -K')$ and any $w\in K'\setminus\{0\}$,
{
\[
\frac{|A w|}{|w|}\geq c_1\|A\|.
\]
}
Indeed, suppose this is not the case. Then, for all $n$ we can find a linear map $A_n$ of norm $1$ with $A_n(K)\subset K'$, and $w_n\in K'$ also of norm $1$, such that $\| A_n w_n\|< 1/n$. By compactness, this implies that there are a linear map $A$ on $V$ of norm $1$ (in particular nonzero) such that $A(K)\subset K'$, and a vector $w\in K'$ such that $Aw=0$. Now pick $u\in K$ such that $Au\neq 0$ and $w-u\in K$; this is possible since $K'\setminus\{0\}\subset\interior(K)$. It follows that { $Au=-A(w-u)\in -K'$}, whence $Au\in K'\cap -K'$, contradicting that $K'$ is a cone.

Now the lemma follows easily since, for a fixed $w\in K'$ of unit norm,
{
\[
\| A_1 A_2\| \ge  |A_1 A_2 w| \ge c_1 \| A_1\| |A_2w| \ge c_1^2 \|A_1\| \|A_2\|.
\]
}
\end{proof}

\subsection{Exterior algebra}

As usual in the study of matrix cocycles, we often make use of the exterior algebra generated by the $j$-alternating forms, which we denote $(\R^d)^{\wedge j}$. It is endowed with an inner product $(\cdot|\cdot)$, with the property that
\[
(v_1\wedge\cdots\wedge v_j|w_1\wedge\cdots\wedge w_j) = \det(v_a\cdot w_b)_{1\le a,b\le j},
\]
where $v_a\cdot w_b$ is the usual inner product on $\R^d$. In particular, if $\{ v_i\}_{i=1}^j$ are orthonormal, then $v_1\wedge\cdots\wedge v_j$ has norm $1$. For $A\in\R^{d\times d}$, we recall that the $j$-fold exterior product $A^{\wedge j}$ of $A$ is defined by the condition
\[
A^{\wedge j}(v_1\wedge\cdots\wedge v_j) = A v_1\wedge\cdots \wedge A v_j.
\]
The following properties are well known:
\begin{enumerate}
\item $(AB)^{\w j}=A^{\w j} B^{\w j}$, and in particular $\| (AB)^{\w j}\|\le \| A^{\w j}\| \| B^{\w j}\|$.
\item $\| A^{\w j}\|=\alpha_1(A)\cdots \alpha_j(A)$, and in particular $\| A^{\w j}\|\le \| A\|^j$.
\end{enumerate}

\subsection{Block-diagonal matrices}
It is geometrically clear that if $A\in\R^{d\times d}$ has an eigenvector $v$ of unit norm such that $|Av| \gg \|A\|_{v^\perp}$ (where $\|A\|_{v^\perp}$ denotes the norm of the restriction of $A$ to the hyperplane orthogonal to $v$), then $A$ satisfies the cone condition for suitable  conical neighborhoods of (the half-line containing) $v$. Because of the robustness of the cone condition, this continues to be true if $v$ is only an approximate eigenvector (meaning that the direction of $Av$ is close to that of $v$), and a similar approximation for $v^\perp$. In this section we prove a statement of this kind for certain exterior products in the ``exact'' case; the ``approximate'' case is deduced as a consequence in the next section. We begin with some definitions and basic lemmas.

A matrix $A\in\R^{d\times d}$ will be called \emdef{$(\lambda,\e)$-conformal} if
\[
\exp(\lambda-\e)|x| \le |Ax| \le \exp(\lambda+\e)|x|\qtext{for all }x\in\R^d.
\]
This is equivalent to saying that all singular values of $A$ lie between $\exp(\lambda-\e)$ and $\exp(\lambda+\e)$.

Given $H_i\in\R^{d_i\times d_i}$, $i=1,\ldots,p$, we define their direct sum $H=\bigoplus_{i=1}^p H_i$ as the block-diagonal matrix with blocks $H_1,\ldots, H_p$. Thus $H\in\R^{d\times d}$, where $d=\sum_{i=1}^p d_i$.

We say that $H$ is of \emdef{hyperbolic class $(d_1,\tau_1),\ldots, (d_p,\tau_p)$ with tolerance $\e$} if:
\begin{itemize}
\item $H=\bigoplus_{i=1}^p H_i$,
\item $H_i\in\R^{d_i\times d_i}$ is $(\tau_i,\e)$-conformal,
\item $\tau_i-\e> \tau_{i+1}+\e$ for $i=1,\ldots,p-1$.
\end{itemize}
The family of all such $H$ will be denoted by $\cH_{(d_i,\tau_i)}^\e$.

\begin{lemma} \label{lem:sing-values-hyperbolic}
Suppose  $H\in \cH_{(d_i,\tau_i)}^\e$.  Let $d_0=0$.  If  $\sum_{i=0}^{r-1} d_i< j \le \sum_{i=0}^{r} d_i$ for some $r\in \{1,\ldots, p\}$, then
\[
\exp(\tau_{r}-\e) \le \alpha_j(H) \le \exp(\tau_{r}+\e).
\]
\end{lemma}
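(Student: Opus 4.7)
The plan is to reduce everything to the standard fact that the singular values of a block-diagonal matrix are the union, with multiplicity, of the singular values of the blocks, and then use the hyperbolicity condition to determine the order.

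First I would observe that for $H = \bigoplus_{i=1}^p H_i$ we have $H^*H = \bigoplus_{i=1}^p H_i^* H_i$, so the eigenvalues of $H^*H$ (with multiplicity) are precisely the union of the eigenvalues of the $H_i^* H_i$. Taking square roots, the multiset of $d$ singular values of $H$ is the disjoint union of the multisets of singular values of the $H_i$, where $H_i$ contributes exactly $d_i$ singular values. This is the only non-purely-definitional ingredient, and it is standard.

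Next I would invoke the definition of $(\tau_i,\e)$-conformality: all $d_i$ singular values of $H_i$ lie in the interval $I_i := [\exp(\tau_i-\e),\exp(\tau_i+\e)]$. The hypothesis $\tau_i - \e > \tau_{i+1}+\e$ for $i = 1,\ldots, p-1$ guarantees that the intervals $I_1,\ldots, I_p$ are pairwise disjoint and strictly decreasing, in the sense that every element of $I_i$ exceeds every element of $I_{i+1}$.

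Consequently, when the $d$ singular values of $H$ are listed in non-increasing order $\alpha_1(H) \ge \alpha_2(H) \ge \cdots \ge \alpha_d(H)$, the first $d_1$ are exactly the singular values of $H_1$, the next $d_2$ are those of $H_2$, and so on. Hence for $r \in \{1,\ldots, p\}$ and $j$ with $\sum_{i=0}^{r-1} d_i < j \le \sum_{i=0}^{r} d_i$, the singular value $\alpha_j(H)$ is one of the singular values of $H_r$, which by $(\tau_r,\e)$-conformality lies in $I_r = [\exp(\tau_r-\e),\exp(\tau_r+\e)]$, giving the desired bounds.

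There is essentially no obstacle here: the lemma is a routine bookkeeping statement preparing the way for the later results, where block-diagonal hyperbolic matrices will be used as prototypes of maps satisfying the cone condition, and the explicit interval bounds on $\alpha_j(H)$ will be needed to control the singular value function $\phi^s$.
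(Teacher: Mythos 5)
Your proof is correct and follows essentially the same route as the paper: both reduce the lemma to the standard fact that the singular values of a block-diagonal matrix are the union, with multiplicity, of the singular values of the blocks (the paper does this by assembling a singular value decomposition of $H$ from those of the $H_i$ and reducing to the diagonal case), after which the separation $\tau_i-\e>\tau_{i+1}+\e$ pins down the ordering. Your version merely makes the sorting step more explicit; no gap.
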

\begin{proof}
If $H_i =  A_i D_i B_i$ is a singular value decomposition of $H_i$ (that is, $A_i, B_i$ are orthonormal and $D_i$ is diagonal), then
\begin{equation} \label{eq:SVD}
H = \left(\bigoplus_i A_i\right)\left(\bigoplus_i D_i\right)\left(\bigoplus_i B_i\right)
\end{equation}
 is a singular value decomposition of $H$. This reduces the statement to the case in which $H$ is diagonal, which is obvious.
\end{proof}

\begin{lemma} \label{lem:wedge-contractivity}
Let $H\in \cH_{(d_i,\tau_i)}^\e$. Let $t=\sum_{i=1}^r d_i$ for some $1\le r\le p$. Write $\Gamma=\sum_{i=1}^r d_i\tau_i$. Let $\{e_1, \ldots, e_d\}$ be  the canonical basis of $\R^d$.  Then $e_1\wedge\cdots\wedge e_t$ is an eigenvector of $H^{\w t}$ with eigenvalue $\ge  \exp\left(\Gamma - t\e\right)$ in modulus, and
\[
|H^{\w t}(e_{i_1}\wedge\cdots\wedge e_{i_t})| \le \exp\left(\Gamma +\tau_{r+1}-\tau_r + t\e\right),
\]
whenever $1\leq i_1<\cdots <i_t\leq d$ and $(i_1,\ldots, i_t)\neq (1,\ldots,t)$.
\end{lemma}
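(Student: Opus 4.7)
The plan is to exploit the block-diagonal structure of $H = \bigoplus_{i=1}^p H_i$: each coordinate subspace $V_k = \operatorname{span}\{e_j : j \text{ in block } k\}$ is $H$-invariant, and the $V_k$'s are mutually orthogonal. For the eigenvector claim, I would observe that $H$ preserves the $t$-dimensional subspace $V_1\oplus\cdots\oplus V_r$, so
\[
H^{\w t}(e_1\wedge\cdots\wedge e_t) = \det(H_1\oplus\cdots\oplus H_r)\cdot(e_1\wedge\cdots\wedge e_t) = \prod_{i=1}^r \det(H_i)\cdot(e_1\wedge\cdots\wedge e_t).
\]
Since each $H_i$ is $(\tau_i,\e)$-conformal, Lemma~\ref{lem:sing-values-hyperbolic} (or the definition directly) gives $|\det(H_i)|\geq \exp(d_i(\tau_i-\e))$, and multiplying yields the lower bound $\exp(\Gamma - t\e)$.

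For the second claim, given a basis wedge $\omega = e_{i_1}\wedge\cdots\wedge e_{i_t}$ different from $e_1\wedge\cdots\wedge e_t$, I would group the indices by the block they lie in. Let $m_k$ be the number of chosen indices in block $k$, so $0\leq m_k\leq d_k$, $\sum_k m_k=t$, and $(m_1,\ldots,m_p)\neq (d_1,\ldots,d_r,0,\ldots,0)$. Up to sign, $\omega=\omega_1\wedge\cdots\wedge\omega_p$ where $\omega_k$ is a basis wedge of $m_k$ unit vectors in $V_k$, and since the $V_k$ are pairwise orthogonal the exterior product is isometric; hence
\[
|H^{\w t}\omega| = \prod_{k=1}^p |H_k^{\w m_k}\omega_k| \leq \prod_{k=1}^p \|H_k^{\w m_k}\| \leq \prod_{k=1}^p \exp\bigl(m_k(\tau_k+\e)\bigr),
\]
where I use property~(2) of exterior products and the $(\tau_k,\e)$-conformality of $H_k$.

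The remaining ingredient is the combinatorial inequality $\sum_{k=1}^p m_k\tau_k \leq \Gamma + \tau_{r+1}-\tau_r$ under the above constraints. Setting $\delta:=\sum_{k\leq r}(d_k-m_k) = \sum_{k>r}m_k\geq 1$, I would rewrite
\[
\sum_k m_k\tau_k = \Gamma - \sum_{k\leq r}(d_k-m_k)\tau_k + \sum_{k>r}m_k\tau_k \leq \Gamma - \delta\tau_r + \delta\tau_{r+1},
\]
using $\tau_k\geq\tau_r$ for $k\leq r$ and $\tau_k\leq\tau_{r+1}$ for $k>r$. Since $\tau_{r+1}-\tau_r<0$ and $\delta\geq 1$, the bound $\delta(\tau_{r+1}-\tau_r)\leq\tau_{r+1}-\tau_r$ finishes the estimate.

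I do not anticipate any substantial obstacle: the argument is essentially bookkeeping once the block-invariance of $H$ and the orthogonality of wedges coming from distinct blocks are in hand. The only small subtlety is the combinatorial optimization, which might look trivial at first glance but genuinely requires the strict monotonicity of the $\tau_k$ together with the ``at least one unit has moved'' lower bound $\delta\geq 1$ to produce the precise gap $\tau_{r+1}-\tau_r$ asserted in the statement.
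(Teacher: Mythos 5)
Your proof is correct and follows essentially the same route as the paper: split the basis wedge $e_{i_1}\wedge\cdots\wedge e_{i_t}$ according to the blocks, bound each block's contribution by $\exp\bigl(m_k(\tau_k+\e)\bigr)$ via conformality, and use that at least one chosen index lies in a block beyond $r$ to extract the gap $\tau_{r+1}-\tau_r$. The only cosmetic difference is that you work with $H$ directly, using the isometry of wedge products over mutually orthogonal subspaces, whereas the paper routes the same estimate through a block-compatible singular value decomposition $H=ADB$; the explicit optimization with $\delta\ge 1$ that you carry out is precisely the step the paper compresses into its final line ``where in the last step we used that $t_j>0$ for some $j>r$.''
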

\begin{proof}
Recall that, since $H_i\in \R^{d_i\times d_i}$, the $d_i$ exterior product of $H_i$ is simply multiplication by the determinant. Identifying $H_i$ with the restriction of $H$ to the corresponding subspace of $\R^d$, we deduce that
\[
H^{\w t}(e_1\wedge\cdots\wedge e_t) = \left(\prod_{i=1}^r \det(H_i)\right) (e_1\wedge\cdots\wedge e_t),
\]
which implies the first claim.

For the second part, let $H= ADB$ be a singular value decomposition of $H$ with the form \eqref{eq:SVD}. It follows that
\[
|H^{\w t}(e_{i_1}\wedge\cdots\wedge e_{i_t})| = | D^{\w t} B^{\w t}(e_{i_1}\wedge\cdots\wedge e_{i_t})|.
\]
In this case, we have that
\[
B^{\w t}(e_{i_1}\wedge\cdots\wedge e_{i_t}) = \bigwedge_{j=1}^p w_1^j \wedge \cdots \wedge w_{t_j}^j,
\]
where, for each $j$, $\{ w_i^j\}$ is an orthonormal system in $\R^{d_j}$ (embedded in $\R^d$ as before), possibly empty; however, at least one of them is nonempty for some $j>r$ (here we use that $(i_1,\ldots, i_t)\neq (1,\ldots, t)$). Using the fact that each $w_1^j \wedge \cdots \wedge w_{t_j}^j$ is a unit vector in $(\R^{d_j})^{\w t_j}$, and the block structure once again, we estimate
\begin{align*}
|H^{\w t}(e_{i_1}\wedge\cdots\wedge e_{i_t})| &\le \prod_{j=1}^p \left\| D_j^{\w t_j}\right\|\\
 &\le \prod_{j=1}^p \|D_j\|^{t_j}\\
 &\le \prod_{j=1}^p \exp(t_j(\tau_j+\e))\\
 &= \exp\left(\left(\sum_{j=1}^p t_j\tau_j\right) +t\e\right)\\
 &\le \exp(\Gamma+\tau_{r+1}-\tau_r+t\e),
\end{align*}
where in the last step we used that $t_j>0$ for some $j>r$.
\end{proof}

In order to apply the last result, we recall some simple facts in an abstract setting. Let { $(V,\cdot|\cdot)$} be a finite dimensional inner product space, and denote the associated norm by $|\cdot|$. Given a nonzero vector $v\in V$ and $0<r<1$, we define the $r$-cone around $v$ as
\[
\cC(v,r) = \{ w\in V: (v|w) \ge (1-r)|v||w|  \}.
\]

\begin{lemma} \label{lem:cone-condition}
Let $A:V\to V$ be a linear map and let { $v\in V\backslash\{0\}$} and $\lambda>0$ be such that
\[
Av=\lambda v \qtext{ and }\quad \|A|_{v^\perp}\|<\lambda/{ 18}.
\]
Then ${ A}\cC(v,1/2)\subset \cC(v,1/5)$.
\end{lemma}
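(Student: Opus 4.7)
The plan is to reduce to a concrete two-parameter calculation by splitting vectors in $\mathcal C(v,1/2)$ along $v$ and $v^\perp$, and then using the gap between the eigenvalue $\lambda$ and the operator norm on $v^\perp$ to control how much $A$ twists a vector away from the $v$-direction.

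First I would normalize so that $|v|=1$, and write an arbitrary $w\in \mathcal C(v,1/2)$ as
\[
w = \alpha v + u, \qquad u\in v^\perp, \qquad \alpha=(v|w).
\]
The membership $w\in\mathcal C(v,1/2)$ means $\alpha\ge \tfrac12|w|$, and squaring $|w|^2=\alpha^2+|u|^2$ gives the key inequality $|u|\le \sqrt{3}\,\alpha$ (in particular $\alpha\ge 0$, and the case $\alpha=0$ forces $w=0$).

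Next, applying $A$ and using $Av=\lambda v$ together with $Au\in v^\perp$ (since $A$ maps $v^\perp$ into itself — wait, this must be used or verified carefully; actually the hypothesis only bounds the norm of $A|_{v^\perp}$, not that $A$ preserves $v^\perp$; however, the decomposition $Aw=\alpha\lambda v + Au$ only needs us to separate the $v$-component from the rest, so I would instead split $Aw = (v|Aw)v + Aw^\perp$ with $Aw^\perp\perp v$ and bound $|Aw^\perp|\le |Aw - \alpha\lambda v|=|Au|$ if $A$ preserves $v^\perp$, or more generally just compute $(v|Aw)=\alpha\lambda + (v|Au)$ with $(v|Au)$ small). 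In the natural reading, $\|A|_{v^\perp}\|$ refers to the operator norm of the restriction so $A(v^\perp)\subset v^\perp$ is implicit; taking this, one has
\[
(v|Aw)=\alpha\lambda, \qquad |Aw|^2 = \alpha^2\lambda^2+|Au|^2.
\]
From the hypothesis $\|A|_{v^\perp}\|<\lambda/18$ and the bound on $|u|$,
\[
|Au|\le \frac{\lambda}{18}|u|\le \frac{\sqrt{3}}{18}\,\alpha\lambda.
\]

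Finally I would verify the target inclusion $Aw\in\mathcal C(v,1/5)$, which is equivalent to
\[
\frac{(v|Aw)}{|Aw|}=\frac{\alpha\lambda}{\sqrt{\alpha^2\lambda^2+|Au|^2}}\ge \frac45,
\]
i.e.\ to $|Au|\le \tfrac34\,\alpha\lambda$. Since $\sqrt{3}/18<3/4$, the inequality above delivers this with room to spare, completing the proof.

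The only real subtlety is the point flagged in the second paragraph: one must read the hypothesis $\|A|_{v^\perp}\|<\lambda/18$ as implying $A(v^\perp)\subset v^\perp$ (the natural meaning of restricting $A$ to $v^\perp$); otherwise the splitting of $Aw$ requires an additional term $(v|Au)v$, which nonetheless is absorbed by the same estimate since $|(v|Au)|\le |Au|$ is still small compared with $\alpha\lambda$, and the constants $1/2,1/5,1/18$ were clearly chosen to leave a comfortable margin. No step looks like a real obstacle; it is essentially bookkeeping around the inner-product decomposition.
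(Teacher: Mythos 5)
Your argument is correct and is essentially the paper's: decompose $w=\alpha v+u$ with $u\in v^\perp$, extract $|u|\le\sqrt3\,\alpha$ from $w\in\cC(v,1/2)$, and control the angle of $Aw$ using $\|A|_{v^\perp}\|<\lambda/18$. One correction of emphasis, however: the hypothesis does \emph{not} say that $A(v^\perp)\subset v^\perp$ — it only bounds $|Au|$ for $u\in v^\perp$ — and in the application (Corollary \ref{cor:block-diagonal-satisfies-cone}, where $A=H^{\w t}$ and $v=e^{\w t}$) the hyperplane $v^\perp$ is in general not invariant, so the case you relegate to a closing remark is the actual statement, while your main computation proves a weaker special case. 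Fortunately the fix you sketch is exactly the paper's proof: $(v|Aw)=\alpha\lambda+(v|Au)\ge\alpha\lambda-|Au|$ and $|Aw|\le\alpha\lambda+|Au|$, with $|Au|\le(\sqrt3/18)\,\alpha\lambda$, give $(v|Aw)/|Aw|\ge(1-\sqrt3/18)/(1+\sqrt3/18)>4/5$, the last inequality holding because $\sqrt3/18<1/9$. So the proof stands once the non-invariant case is written out as the main argument rather than an afterthought; note that the margin there ($\sqrt3/18$ versus $1/9$) is tighter than the $3/4$ threshold you quote in the invariant case, but still comfortable.
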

\begin{proof}
{
Without loss of generality,  assume that $|v|=1$.  Let $w\in \cC(v,1/2)$ and $w\neq 0$. We can write $w=av+bz$ for some $a,b\in \R$ and $z\in v^\perp$ of unit norm. The assumption $w\in \cC(v,1/2)$ implies that
\[
a=(v|w)\geq (1-1/2)|w|=\frac{1}{2}\sqrt{a^2+b^2},
\]
i.e. $\sqrt{3}a\geq |b|$.
 A direct calculation shows that
\[
\frac{(v|Aw)}{|v||Aw|}=\frac{(v|(a\lambda v+bAz))}{|a\lambda v+bAz|} \ge \frac{a\lambda -|b|\lambda/18}{a\lambda +|b|\lambda/18} \ge \frac{1-\sqrt{3}/18}{1+\sqrt{3}/18}> 4/5.
\]
Hence $Aw\in \cC(v,1/5)$.
}
\end{proof}

\begin{corollary} \label{cor:block-diagonal-satisfies-cone}
Let $H\in \cH_{(d_i,\tau_i)}^\e$, and suppose that
\[
\sqrt{d\choose {\lfloor d/2\rfloor }}\max_{1\le r\le p-1}\exp(-\tau_r+\tau_{r+1}+2 d\e) < 1/18.
\]
Denote $t_r=\sum_{i=1}^r d_i$ and $e^{\w t}=e_1\wedge\cdots\wedge e_t$. Then, for all $1\le r\le p-1$,
\[
H^{\w t_r}\left(\cC(e^{\w t_r},1/2)\right) \subset (\cC(e^{\w t_r},1/5)\cup -\cC(e^{\w t_r},1/5)).
\]
\end{corollary}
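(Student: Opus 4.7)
The plan is to apply Lemma~\ref{lem:cone-condition} to the linear map $A=\pm H^{\w t_r}$ acting on the inner product space $V=(\R^d)^{\w t_r}$, with the privileged direction $v=e^{\w t_r}$. Lemma~\ref{lem:wedge-contractivity} already identifies $e^{\w t_r}$ as an eigenvector of $H^{\w t_r}$: writing $\Gamma_r=\sum_{i=1}^r d_i\tau_i$, the corresponding eigenvalue $\lambda$ satisfies $|\lambda|\ge \exp(\Gamma_r-t_r\e)$. The sign of $\lambda$ is irrelevant because the right-hand side of the conclusion is symmetric under negation; after replacing $H^{\w t_r}$ by $-H^{\w t_r}$ if necessary, we may run Lemma~\ref{lem:cone-condition} with a positive eigenvalue and then absorb the sign into the union with $-\cC(e^{\w t_r},1/5)$ at the end.

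The only substantive step is therefore bounding $\|H^{\w t_r}|_{(e^{\w t_r})^\perp}\|$. The orthogonal complement of $e^{\w t_r}$ in $(\R^d)^{\w t_r}$ is spanned by the remaining wedge basis vectors $e_{i_1}\w\cdots\w e_{i_{t_r}}$ with $(i_1,\ldots,i_{t_r})\neq(1,\ldots,t_r)$, and these form an orthonormal system. For a unit vector $u=\sum_I c_I\, e_I$ in this complement (with the sum over the remaining $\binom{d}{t_r}-1$ multi-indices), the triangle inequality combined with the bound from Lemma~\ref{lem:wedge-contractivity} gives
\[
|H^{\w t_r}u|\le \Bigl(\sum_I |c_I|\Bigr)\exp(\Gamma_r+\tau_{r+1}-\tau_r+t_r\e).
\]
Cauchy--Schwarz yields $\sum_I |c_I|\le \sqrt{\binom{d}{t_r}-1}\le \sqrt{\binom{d}{\lfloor d/2\rfloor}}$, using $\binom{d}{t_r}\le \binom{d}{\lfloor d/2\rfloor}$. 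Combining this estimate with the lower bound on $|\lambda|$ and using $t_r\le d$,
\[
\frac{\|H^{\w t_r}|_{(e^{\w t_r})^\perp}\|}{|\lambda|}\;\le\;\sqrt{\binom{d}{\lfloor d/2\rfloor}}\,\exp\bigl(-\tau_r+\tau_{r+1}+2d\e\bigr),
\]
which is strictly less than $1/18$ by the hypothesis of the corollary.

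With this gap in hand, Lemma~\ref{lem:cone-condition} applied to $\pm H^{\w t_r}$ (whichever sign makes the eigenvalue positive) gives $\pm H^{\w t_r}\,\cC(e^{\w t_r},1/2)\subset \cC(e^{\w t_r},1/5)$, and incorporating the undetermined sign yields the stated inclusion. I do not foresee a genuine obstacle here; the only mildly delicate point is the sign of the eigenvalue $\prod_{i=1}^r\det(H_i)$, and this is handled cleanly by the presence of $-\cC(e^{\w t_r},1/5)$ on the right-hand side. All other ingredients are combinatorial (the binomial bound) and direct consequences of Lemmas~\ref{lem:wedge-contractivity} and~\ref{lem:cone-condition}.
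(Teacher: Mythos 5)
Your proof is correct and follows essentially the same route as the paper: use Lemma~\ref{lem:wedge-contractivity} for the eigenvalue lower bound and the off-eigenvector bound, apply Cauchy--Schwarz over the orthonormal wedge basis to control $\|H^{\w t_r}|_{(e^{\w t_r})^\perp}\|$ by $\sqrt{\binom{d}{\lfloor d/2\rfloor}}\exp(\Gamma+\tau_{r+1}-\tau_r+t_r\e)$, and conclude via Lemma~\ref{lem:cone-condition}. Your explicit treatment of the possibly negative eigenvalue $\prod_{i\le r}\det(H_i)$ (passing to $-H^{\w t_r}$ and absorbing the sign into the union with $-\cC(e^{\w t_r},1/5)$) is a detail the paper leaves implicit, and it is handled correctly.
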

\begin{proof}
{ Fix $1\le r\le p-1$ and set $t=t_r$. Let $\cA$ denote the collection of indices $(i_1,\ldots,i_t)$ with $1\leq i_1<\cdots<i_t\leq d$. Then
$\#\cA={d\choose  t}\leq {d\choose {\lfloor d/2\rfloor}}$. Set $v=e^{\w t}$.
By Lemma \ref{lem:wedge-contractivity}, $v$ is an eigenvector of $H^{\w t}$ with eigenvalue $\geq  \exp(\Gamma-t\epsilon)$ in modulus, where $\Gamma=\sum_{i=1}^r d_i\tau_i$. Moreover
$$
|H^{\w t}(e_{i_1}\wedge\cdots\wedge e_{i_t})|
 \le \exp(\Gamma+\tau_{r+1}-\tau_r+t\e)
$$
for any $(i_1,\ldots,i_t)\in \cA\setminus \{(1,\ldots, t)\}$.
Observe that
$$
\{e_{i_1}\wedge\cdots\wedge e_{i_t}:\; (i_1,\ldots,i_t)\in \cA\}
$$
is an orthonormal basis of $\R^{\w t}$.  By the Cauchy-Schwartz inequality, we have
\begin{align*}
\|H|_{v^{\perp}}\|&\leq \sqrt{\#\cA}\sup\{|H(e_{i_1}\wedge\cdots\wedge e_{i_n})|:\; (i_1,\ldots,i_t)\in \cA\setminus \{(1,\ldots, t)\}\}\\
&\leq \sqrt{d\choose {\lfloor d/2\rfloor }} \exp(\Gamma+\tau_{r+1}-\tau_r+t\e)\leq   \exp(\Gamma-t\e)/18.
\end{align*}
Now the corollary follows from  Lemma \ref{lem:cone-condition}.}
\end{proof}

\subsection{Space of splittings}

The Grassmanian $G(d,k)$ is the manifold of $k$-dimensional subspaces of $\R^d$. It is endowed with the metric $d(V,W)=\inf \|O-I\|$, where the infimum is over all orthogonal maps $O$ with $OV=W$.

Given numbers $ d_1,\ldots,  d_p\in\N$ with $\sum_i  d_i=d$, we denote by $\mathcal{S}=\mathcal{S}_{ d_1,\ldots, d_p}$ the collection of all splittings $\mathbf{V}= \bigoplus_{i=1}^p V_i$ of $\R^d$, where $V_i\in G(d, d_i)$. Then $\mathcal{S}$ is naturally identified with an open subset of the product $\times_{i=1}^p G(d, d_i)$ and inherits the $\ell^\infty$ metric: $d(\mathbf{V},\mathbf{W}) = \max_{i=1}^p d(V_i,W_i)$. We fix the canonical splitting $\mathbf{V}^* = \bigoplus_{i=1}^p V_i^*$, where $V_i^*$ is the subspace generated by the canonical vectors $e_j$, { $ d_0+\ldots+ d_{i-1} < j\le d_0+ d_1+\ldots+ d_{i}$ with $d_0:=0$}.

\begin{lemma} \label{lem:cone-condition-general}
There is $\delta=\delta(d)>0$ such that the following holds. Let $A:\R^d\to \R^d$ be a linear map such that:
\begin{enumerate}
\item \label{it:splitting-near-canonical} There are splittings $\mathbf{V},\mathbf{W}$ such that $A\mathbf{V}=\mathbf{W}$ (i.e. $A V_i=W_i$) and
\[
d(\mathbf{V},\mathbf{V}^*),d(\mathbf{W},\mathbf{V}^*)<\delta.
\]
\item \label{it:contraction-on-splitting} $|Av|\in [\exp(\tau_i-\e)|v|,\exp(\tau_i+\e)|v|]$ for all $v\in V_i\setminus\{0\}$, where the $\tau_i$ and $\e$ satisfy
\[
\sqrt{d\choose {\lfloor d/2\rfloor }}\max_{1\le r\le p-1}\exp(-\tau_r+\tau_{r+1}+2 d\e) < 1/18.
\]
\end{enumerate}
Then
\[
{ A}^{\w t_r}\left(\cC(e^{\w t_r},1/3)\right) \subset (\cC(e^{\w t_r},1/4)\cup -\cC(e^{\w t_r},1/4))\quad\text{for } r=1,\ldots,p-1,
\]
where $t_r$ and $e^{\w t_r}$ are as in Corollary \ref{cor:block-diagonal-satisfies-cone}.
\end{lemma}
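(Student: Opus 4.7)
The plan is to reduce to the block-diagonal case already treated in Corollary~\ref{cor:block-diagonal-satisfies-cone} by conjugating $A$ with linear maps close to the identity, exploiting the closeness of $\mathbf{V}$ and $\mathbf{W}$ to the canonical splitting $\mathbf{V}^*$. For each $i$ the assumption $d(V_i,V_i^*)<\delta$ furnishes an orthogonal map $O_i$ with $O_iV_i^*=V_i$ and $\|O_i-I\|<\delta$, and similarly $O_i'$ with $O_i'V_i^*=W_i$ and $\|O_i'-I\|<\delta$. Assembling these block by block yields linear maps $P,Q:\R^d\to\R^d$ with $PV_i^*=V_i$, $QV_i^*=W_i$, and $\|P-I\|, \|Q-I\|=O(\delta)$, so that $P$ and $Q$ are invertible with $\|P^{\pm 1}\|, \|Q^{\pm 1}\|=1+O(\delta)$ for $\delta$ small. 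Setting $H:=Q^{-1}AP$ and using that $AV_i=W_i$, the map $H$ preserves each $V_i^*$, so $H=\bigoplus_i H_i$ is block-diagonal with respect to $\mathbf{V}^*$. The $(\tau_i,\e)$-conformality of $A|_{V_i}$ together with the $1+O(\delta)$ norm bounds on the blocks of $P$ and $Q^{-1}$ yield that each $H_i$ is $(\tau_i, \e+c(\delta))$-conformal, where $c(\delta)\to 0$ as $\delta\to 0$; consequently $H\in\cH^{\e+c(\delta)}_{(d_i,\tau_i)}$.

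From the factorization $A^{\w t_r}=Q^{\w t_r}\,H^{\w t_r}\,(P^{\w t_r})^{-1}$ I would then chain three cone inclusions. Since $(P^{\w t_r})^{\pm 1}$ and $Q^{\w t_r}$ are themselves $O(\delta)$-close to the identity (as $t_r$-fold exterior powers depend continuously on the underlying maps), for $\delta=\delta(d)$ small enough one obtains
\[
(P^{\w t_r})^{-1}\,\cC(e^{\w t_r},1/3)\subset \cC(e^{\w t_r},1/2),\qquad Q^{\w t_r}\,\cC(e^{\w t_r},1/5)\subset \cC(e^{\w t_r},1/4),
\]
together with the analogous inclusions for the opposite cones. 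The middle step $H^{\w t_r}\,\cC(e^{\w t_r},1/2)\subset \cC(e^{\w t_r},1/5)\cup -\cC(e^{\w t_r},1/5)$ is what Corollary~\ref{cor:block-diagonal-satisfies-cone} provides; composing the three inclusions yields the statement.

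The delicate point, and main obstacle, is that $H$ has tolerance $\e'=\e+c(\delta)$ rather than $\e$, whereas the hypothesis of Corollary~\ref{cor:block-diagonal-satisfies-cone} is phrased in terms of $\e$; since $\delta$ must be chosen depending only on $d$ (not on $\e$ or the $\tau_i$), the perturbed inequality $\sqrt{{d\choose \lfloor d/2\rfloor}}\max_r\exp(-\tau_r+\tau_{r+1}+2d\e')<1/18$ is not an automatic consequence of our strict hypothesis $<1/18$. I would handle this by revisiting the proof of Lemma~\ref{lem:cone-condition} with the looser cones $\cC(\cdot,1/3)$ and $\cC(\cdot,1/4)$: a direct re-computation shows that the weaker inclusion $\cC(v,1/3)\to \cC(v,1/4)$ holds whenever $\|A|_{v^\perp}\|<c\lambda$ for some universal $c>1/18$, which in turn upgrades Corollary~\ref{cor:block-diagonal-satisfies-cone} to give the $\cC(\cdot,1/3)\to\cC(\cdot,1/4)\cup -\cC(\cdot,1/4)$ conclusion under the strictly weaker hypothesis $<c$. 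Since $c(\delta)\to 0$ as $\delta\to 0$, choosing $\delta(d)$ small enough ensures $\sqrt{{d\choose \lfloor d/2\rfloor}}\max_r\exp(-\tau_r+\tau_{r+1}+2d\e')<c$, so this slacker variant of the corollary applies to $H$, and chaining the inclusions completes the proof.
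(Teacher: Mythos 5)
Your reduction is structurally the same as the paper's: you assemble block maps $P,Q$ (the paper's $O,O'$) from orthogonal maps close to the identity, conjugate to $H=Q^{-1}AP$, which is block-diagonal for $\mathbf V^*$, apply Corollary \ref{cor:block-diagonal-satisfies-cone} to $H$, and absorb the outer factors by noting that their exterior powers are near the identity and hence map $\cC(e^{\w t_r},1/3)$ into $\cC(e^{\w t_r},1/2)$ and $\cC(e^{\w t_r},1/5)$ into $\cC(e^{\w t_r},1/4)$. However, the ``delicate point'' you identify is not actually there, and spotting this is the one idea you missed. Since each $O_i$ is orthogonal on all of $\R^d$ and carries $V_i^*$ onto $V_i$, the restriction $P|_{V_i^*}$ is an \emph{exact isometry} onto $V_i$; likewise $Q|_{V_i^*}$ is an exact isometry onto $W_i$, so $Q^{-1}|_{W_i}$ is an exact isometry onto $V_i^*$. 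Hence for $u\in V_i^*$ one has $|Hu|=|Q^{-1}(APu)|=|APu|$ and $|Pu|=|u|$, so each block $H_i$ is $(\tau_i,\e)$-conformal with the \emph{same} tolerance $\e$, i.e.\ $H\in\cH^{\e}_{(d_i,\tau_i)}$, and Corollary \ref{cor:block-diagonal-satisfies-cone} applies verbatim. The global bounds $\|P^{\pm1}\|,\|Q^{\pm1}\|=1+O(\delta)$ are only needed for the outer cone inclusions, not for the conformality of the blocks; this is precisely how the paper's proof avoids any dependence of $\delta$ on $\e$ and the $\tau_i$.

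Because of this, your patch is unnecessary, and as written it also does not compose. The slacker variant you propose gives $H^{\w t_r}\cC(e^{\w t_r},1/3)\subset \cC(e^{\w t_r},1/4)\cup-\cC(e^{\w t_r},1/4)$, but in the factorization $A^{\w t_r}=Q^{\w t_r}H^{\w t_r}(P^{\w t_r})^{-1}$ the outer maps only send cones into slightly \emph{larger} cones: $(P^{\w t_r})^{-1}\cC(e^{\w t_r},1/3)$ need not lie in $\cC(e^{\w t_r},1/3)$, so the variant's hypothesis cone is not met, and $Q^{\w t_r}\cC(e^{\w t_r},1/4)$ need not lie in $\cC(e^{\w t_r},1/4)$, so the conclusion is not preserved; the inclusions $\cC(1/3)\to\cC(1/2)$ and $\cC(1/5)\to\cC(1/4)$ you quote are calibrated for the original middle step $\cC(1/2)\to\cC(1/5)$ and do not chain with the new one. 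The idea behind the patch is salvageable --- there is genuine slack in Lemma \ref{lem:cone-condition} even with the original radii (the computation for $\cC(v,1/2)\to\cC(v,1/5)$ goes through whenever $\|A|_{v^\perp}\|<\lambda/(9\sqrt3)$, and $1/(9\sqrt3)>1/18$), so one could keep the original chain and absorb a small $c(\delta)$, after also checking that the gap condition $\tau_i-\e'>\tau_{i+1}+\e'$ still holds for $\e'=\e+c(\delta)$ --- but the final sentence ``chaining the inclusions completes the proof'' is not justified in the form you give it. The cleanest repair is the exact-isometry observation above, which removes the need for any modified corollary.
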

\begin{proof}
Let $O_i$ be an orthogonal map of $\R^{ d}$ such that $O_i V^*_i=V_i$ and $\|O_i-I\|\le \delta$, and let $O$ be linear map that equals $O_i$ on $V^*_i$. If $\delta$ is small, different subspaces $V_i$ make an angle close to $\pi/2$, and it follows that $\|O-I\|<C_d \delta$. Likewise, we define $O'$ such that $O'|_{V^*_i}$ is an orthogonal map onto $W_i$ and $\|O'-I\|<C_d \delta$.

Now, by \eqref{it:splitting-near-canonical} and \eqref{it:contraction-on-splitting}, $H:=(O')^{-1} A O\in\mathcal{H}_{\tau_i,d_i}^\e$ whence, by Corollary \ref{cor:block-diagonal-satisfies-cone},
\[
H^{\w t_r}\left(\cC(e^{\w t_r},1/2)\right) \subset (\cC(e^{\w t_r},1/5)\cup -\cC(e^{\w t_r},1/5))\quad\text{for } r=1,\ldots,p-1.
\]
Then we only need to pick $\delta$ small enough to ensure that if $\|U-I\|<C_d \delta$, then
\[
U\left(\cC(e^{\w t},{ 1/3})\right)  \subset \cC(e^{\w t}, { 1/2})\,\,\text{ and }\,\,U\left(\cC(e^{\w t},1/5)\right)\subset \cC(e^{\w t},1/4)\text{ for } t=1,\ldots,d.
\]
\end{proof}

\section{Sub-cocycles satisfying the cone condition}

\label{sec:cone-condition}

From now on $X$ is a fixed subshift of finite type, and the shift map is denoted by $T$. The family of $T$-invariant ergodic measures on $X$ will be denoted by $\cE$.  We will require the following version of Oseledets Ergodic Theorem, due to Froyland, Lloyd and Quas \cite[Theorem 4.1]{FLQ10}:

\begin{theorem} \label{thm:oseledets}
Given a measurable map $A:X\to\Rd$ and $\mu\in\cE$ such that
\[
\int \log^+\|A(x)\|d\mu(x)<+\infty,
\]
there exist  $\lambda_1>\cdots>\lambda_p\ge-\infty$, dimensions $d_1,\ldots, d_p$ with $\sum_{i=1}^p d_i=d$, and a measurable family of splittings $\mathbf{E}(x)=\bigoplus_{i=1}^p E_i(x)\in\mathcal{S}_{d_1,\ldots,d_p}$, such that for $\mu$-almost all $x$ the following holds:
\begin{enumerate}
\item $A(x)E_i(x) \subset E_i(Tx)$, with equality if $\lambda_i > -\infty$,
\item For all $v\in E_i(x)\setminus\{0\}$,
\[
\lim_{n\to\infty} \frac{\log |A(x,n)v|}{n}=\lambda_i,
\]
with uniform convergence on any compact subset of $E_i(x)\setminus\{0\}$.
\end{enumerate}
\end{theorem}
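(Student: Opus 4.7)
The plan is to follow the classical singular-value approach to Oseledets' theorem via exterior powers and Kingman's sub-additive ergodic theorem, and then to upgrade the resulting filtration to a splitting by passing to the natural extension, as in Froyland--Lloyd--Quas.

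First, for each $1\le k\le d$, the sequence $\Lambda_k(n,x) := \log \|A(x,n)^{\w k}\|$ is sub-additive in $n$, since $\|(AB)^{\w k}\| \le \|A^{\w k}\|\,\|B^{\w k}\|$, and the integrability hypothesis transfers from $A$ to $A^{\w k}$ because $\|A^{\w k}\|\le \|A\|^k$. Kingman's theorem then yields limits $\Lambda_k\in[-\infty,\infty)$ with $\Lambda_k(n,x)/n\to\Lambda_k$ for $\mu$-a.e.\ $x$. Setting $\Lambda_0:=0$ and $\lambda_i:=\Lambda_i-\Lambda_{i-1}$ produces a non-increasing extended-real sequence; collapsing equal consecutive values yields the distinct exponents $\lambda_1>\cdots>\lambda_p\ge -\infty$ together with their multiplicities $d_1,\ldots,d_p$.

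Second, I would define the Oseledets filtration
$$F_i(x) := \left\{ v\in\R^d \;:\; \limsup_{n\to\infty} \tfrac{1}{n}\log|A(x,n)v| \le \lambda_i\right\},$$
with $F_{p+1}(x):=\{0\}$. Standard linear-algebraic arguments, comparing growth of single vectors with growth of wedge products $v\w w_1\w \cdots \w w_{k-1}$ via the relation $\|A^{\w k}\|=\alpha_1(A)\cdots\alpha_k(A)$, would show that $F_i(x)$ is a linear subspace of dimension $d_i+\cdots+d_p$, the inclusions $F_{i+1}(x)\subsetneq F_i(x)$ are strict, and $A(x)F_i(x)\subset F_i(Tx)$.

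To upgrade this filtration to a splitting $\mathbf{E}(x)=\bigoplus_i E_i(x)$, I would pass to the invertible natural extension $(\hat X,\hat T,\hat \mu)$ of $(X,T,\mu)$ and lift $A$ by pre-composition with the canonical projection $\pi:\hat X\to X$. Although $A$ itself may fail to be invertible, the invertibility of the base allows one to construct $E_i(\hat x)$ as the stabilised limit, in the Grassmannian, of the most-expanding $d_i$-dimensional directions of $A(\hat T^{-n}\hat x,n)$ pushed forward to $\hat x$ and intersected with $F_i(\hat x)$. The technical heart is the almost-sure convergence of this decreasing sequence of subspaces to a $d_i$-dimensional subspace of $F_i(\hat x)$ complementary to $F_{i+1}(\hat x)$. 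Uniqueness of the splitting then forces $E_i$ to be measurable with respect to the forward $\sigma$-algebra and so to descend to a family $E_i(x)$ on $X$. Equivariance $A(x)E_i(x)\subset E_i(Tx)$ is built in, with equality when $\lambda_i>-\infty$ because $A$ is then injective on $E_i(x)$. Finally, the uniform convergence on compact subsets of $E_i(x)\setminus\{0\}$ follows from the exact growth rate on $E_i(x)$ combined with a compactness argument bounding angles away from $F_{i+1}(x)$.

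The main obstacle is precisely the splitting step: the two-sided version of Oseledets uses the inverse cocycle to produce fast directions, a tool which is unavailable in the semi-invertible setting considered here. The Froyland--Lloyd--Quas contribution is to show that the decreasing intersection of pullback subspaces along the natural extension stabilises $\hat\mu$-almost surely and has the correct dimension, thus furnishing $E_i$ without ever inverting $A$.
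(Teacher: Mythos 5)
Your outline reproduces the Froyland--Lloyd--Quas strategy (Kingman applied to $\log\|A(x,n)^{\wedge k}\|$ to extract the exponents and multiplicities, the filtration $F_i(x)$, and the construction of the fast spaces through the natural extension), and you explicitly defer the hard stabilisation step to \cite{FLQ10}. That is consistent with what the paper itself does: Theorem \ref{thm:oseledets} is quoted from \cite[Theorem 4.1]{FLQ10} and is not reproved; the only argument the authors actually supply is for the uniform convergence in part (2), which is \emph{not} stated in \cite{FLQ10}. So for the splitting itself your proposal and the paper rest on the same external result, and there is no disagreement there.

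The genuine gap is precisely the part the paper does argue. Claiming that the uniform convergence on a compact $K_i\subset E_i(x)\setminus\{0\}$ ``follows from the exact growth rate on $E_i(x)$ combined with a compactness argument bounding angles away from $F_{i+1}(x)$'' is not a proof: pointwise convergence of the continuous functions $v\mapsto \tfrac1n\log|A(x,n)v|$ to the constant $\lambda_i$ on a compact set does not self-improve to uniform convergence, and inside $E_i(x)$ there is no angle to bound, since every nonzero vector of $E_i(x)$ already has exponent exactly $\lambda_i$; what is needed are growth bounds that are quantitative and uniform in $v$ for all large $n$. The paper gets them from Ruelle's one-sided theorem \cite[Theorem 1.6]{Ruelle79}: almost surely $(A(x,n)^*A(x,n))^{1/2n}\to B(x)$, the distinct eigenvalues of $B$ are $e^{\lambda_1}>\cdots>e^{\lambda_p}$ with eigenspaces $U_1,\ldots,U_p$, and this matrix convergence yields uniform convergence of $\tfrac1n\log|A(x,n)v|$ on compact subsets of $U_i\setminus\{0\}$; then the relations $E_i\subset U_i\oplus\cdots\oplus U_p$ and $E_i\cap(U_{i+1}\oplus\cdots\oplus U_p)=\{0\}$ show that the orthogonal projection of $K_i$ onto $U_i$ is a compact subset of $U_i\setminus\{0\}$, and writing $v=u+w$ with $u\in U_i$, $w\in U_{i+1}\oplus\cdots\oplus U_p$ transfers the uniform two-sided estimates to $K_i$. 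If you wish to stay within your framework you must prove the analogous uniform estimates yourself: an upper bound $\|A(x,n)|_{F_{i+1}(x)}\|\le e^{n(\lambda_{i+1}+\epsilon)}$ for all large $n$ (available via Kingman applied to the restricted norms, which are sub-multiplicative by equivariance of the filtration) together with a uniform lower bound $e^{n(\lambda_i-\epsilon)}$ for the expansion of the component of $v$ transversal to $F_{i+1}(x)$; this last point is the nontrivial one, and it is exactly what the Ruelle matrix limit (or, alternatively, a co-norm/superadditivity argument on $E_i$ identifying the minimal expansion rate with $\lambda_i$) supplies and your sketch omits.
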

We remark that the uniform convergence in part (2) of Theorem \ref{thm:oseledets} is not stated in \cite{FLQ10}. However, this is well-known when the cocycle  $A$ takes values in $\mathrm{GL}_d(\R)$, and we sketch an argument that works also in the general case of $\R^{d\times d}$-valued cocycles. It follows from standard proofs of Oseledets' Theorem (see e.g. \cite[Theorem 1.6]{Ruelle79}) that the limit
\begin{equation} \label{eq:oseldets-convergence-to-matrix}
\lim_{n\to \infty} (A(x,n)^*A(x,n))^{1/2n}:=B
\end{equation}
exists for $\mu$ almost all $x$, and $\exp(\lambda_1)>\ldots> \exp(\lambda_p)$ are the different eigenvalues of $B$ (the matrix $B$ depends on $x$). Moreover, letting $U_1,\ldots, U_\p$ denote the corresponding eigenspaces of $B$,  we have  $(1/n)\log |A(x,n)v|$ converges to $\lambda_i$ uniformly (with respect to $v$) on any compact subset of $U_i\backslash \{0\}$, as $n\to \infty$.  According to Theorem \ref{thm:oseledets}(2), we have $E_i\subset U_i\oplus U_{i+1}\oplus \cdots \oplus U_p$ and $E_i\cap (U_{i+1}\oplus \cdots \oplus U_p)=\{0\}$; moreover, for any compact set $K_i$ of $E_i(x)\backslash \{0\}$, the orthogonal projection of $K_i$ on $U_i$ is a compact subset of $U_i\backslash \{0\}$, from which we deduce the uniform convergence of   $(1/n)\log |A(x,n)v|$  on $K_i$.

The data $(\lambda_i,d_i)_{i=1}^p$ from Theorem \ref{thm:oseledets} is called the \emdef{Lyapunov spectrum} of $(A,\mu)$. When $A(x)$ is invertible for all $x$ this is the classical Oseledets Ergodic Theorem, but we underline that the above is valid even in the non-invertible case (in which case the usual statements of Oseledets' Theorem only provide a flag and not a splitting into subspaces).

We record the following useful fact.
\begin{lemma} \label{lem:oseledets-gives-Lyapunov-for-SV}
Let $A:X\to \R^{d\times d}$ be a measurable cocycle such that $\int \log^+\|A(x)\|\,d\mu(x)<+\infty$, and let $\mu\in\cE$. Let $\{(\lambda_i,d_i)\}_{i=1}^p$ be the Lyapunov spectrum, and write { $d_0=0$} and  $t_r=\sum_{i={ 0}}^r d_i$, $\Gamma_r = \sum_{i={ 0}}^r d_i \lambda_i$. If $t_r < s\le t_{r+1}$ for some $0\le r<p$, then
\[
\lim_{n\to\infty}\frac{1}{n}\int \log\phi^s(A(x,n)) \,d\mu(x) = \Gamma_r + (s-t_r)\lambda_{r+1}.
\]
\end{lemma}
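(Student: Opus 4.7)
The plan is to apply Kingman's subadditive ergodic theorem to reduce the statement to an a.e.\ pointwise identification, and then to carry out that identification using Theorem~\ref{thm:oseledets}. Since $\phi^s$ is submultiplicative \cite[Lemma 2.1]{Falconer88}, the sequence $\bigl(\log\phi^s(A(\cdot,n))\bigr)_n$ is a subadditive cocycle over $T$; combined with the bound $\phi^s(M)\le \|M\|^s$ (valid for $0\le s\le d$) and the hypothesis $\int\log^+\|A\|\,d\mu<\infty$, Kingman's theorem together with the ergodicity of $\mu$ produces a constant $c\in[-\infty,\infty)$ such that $\tfrac{1}{n}\log\phi^s(A(x,n))\to c$ for $\mu$-a.e.\ $x$ and $\tfrac{1}{n}\int\log\phi^s(A(x,n))\,d\mu\to c$. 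It therefore suffices to show $c=\Gamma_r+(s-t_r)\lambda_{r+1}$.

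Setting $m=\lfloor s\rfloor$, one has $\log\phi^s(M)=\sum_{j=1}^m\log\alpha_j(M)+(s-m)\log\alpha_{m+1}(M)$, with the convention that the last term vanishes when $s$ is an integer. The key input is the a.e.\ singular-value asymptotics
$$\tfrac{1}{n}\log\alpha_j(A(x,n))\longrightarrow\Lambda_j\qquad\text{for each }1\le j\le d,$$
where $\Lambda_i:=\lambda_\ell$ whenever $t_{\ell-1}<i\le t_\ell$. Granting this, the condition $t_r<s\le t_{r+1}$ forces $\Lambda_{m+1}=\lambda_{r+1}$ whenever $s-m>0$, so
$$c=\sum_{j=1}^m\Lambda_j+(s-m)\lambda_{r+1}=\Gamma_r+(m-t_r)\lambda_{r+1}+(s-m)\lambda_{r+1}=\Gamma_r+(s-t_r)\lambda_{r+1},$$
with the convention $-\infty+\text{finite}=-\infty$ taking care of the case $\lambda_p=-\infty$.

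To establish the singular-value limit, pick a $\mu$-measurable basis $v_1(x),\ldots,v_d(x)$ of $\R^d$ adapted to the Oseledets splitting $\bigoplus_{\ell=1}^p E_\ell(x)$ so that $v_{t_{\ell-1}+1}(x),\ldots,v_{t_\ell}(x)$ is an orthonormal basis of $E_\ell(x)$. The uniform convergence clause of Theorem~\ref{thm:oseledets}(2) gives $\tfrac{1}{n}\log|A(x,n)v_i(x)|\to\Lambda_i$; coupled with the fact that the angles between the distinct subspaces $E_\ell(T^n x)$ decay only subexponentially along the orbit, this yields
$$\tfrac{1}{n}\log\bigl|A^{\wedge j}(x,n)(v_{i_1}(x)\wedge\cdots\wedge v_{i_j}(x))\bigr|\longrightarrow\Lambda_{i_1}+\cdots+\Lambda_{i_j}$$
for every $1\le i_1<\cdots<i_j\le d$. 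Maximizing over $j$-subsets and using $\|A^{\wedge j}\|=\alpha_1\cdots\alpha_j$ recovers $\tfrac{1}{n}\log\|A^{\wedge j}(x,n)\|\to\Lambda_1+\cdots+\Lambda_j$, and a telescoping argument extracts the individual limits $\tfrac{1}{n}\log\alpha_j(A(x,n))\to\Lambda_j$. The principal obstacle is the subexponential angle estimate in the non-invertible setting of Theorem~\ref{thm:oseledets}, which yields only a splitting (rather than the full flag provided by the classical invertible Oseledets theorem) and so must be extracted directly from the uniform-convergence clause of Theorem~\ref{thm:oseledets}(2).
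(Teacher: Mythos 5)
Your overall skeleton is the same as the paper's: reduce to an a.e.\ statement via the subadditive ergodic theorem, and identify the limit through the exponential growth rates of the singular values $\alpha_j(A(x,n))$ (the paper phrases this through $\|A(x,n)^{\w m}\|$ and $\|A(x,n)^{\w (m+1)}\|$ together with the identity $\phi^s(B)=\|B^{\w m}\|^{m+1-s}\|B^{\w (m+1)}\|^{s-m}$, which is just your telescoped version). The Kingman step and the final arithmetic, including the $\lambda_{r+1}=-\infty$ and integer-$s$ conventions, are fine.

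The genuine gap is in your derivation of the singular-value asymptotics $\tfrac1n\log\alpha_j(A(x,n))\to\Lambda_j$. You try to extract this from the splitting statement of Theorem~\ref{thm:oseledets}(2) via an adapted basis, and the lower bound on $\bigl|A^{\w j}(x,n)(v_{i_1}\w\cdots\w v_{i_j})\bigr|$ then hinges on the claim that angles between the images decay at most subexponentially. You flag this as ``the principal obstacle'' but never prove it, and it is not a soft point: for basis vectors lying in \emph{different} Oseledets blocks the subexponential angle estimate is a nontrivial part of the Oseledets/Ruelle theory (not a formal consequence of the uniform convergence clause), and for two orthonormal vectors inside the \emph{same} block $E_\ell(x)$ nothing in Theorem~\ref{thm:oseledets}(2) prevents their images under $A(x,n)$ from becoming exponentially aligned --- ruling that out is essentially equivalent to the exterior-power growth statement you are trying to establish, so the argument as written is circular at its key step. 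The paper sidesteps all of this by invoking the convergence \eqref{eq:oseldets-convergence-to-matrix} of $(A(x,n)^*A(x,n))^{1/2n}$ to a limit matrix $B$ (from Ruelle's proof, as recorded after Theorem~\ref{thm:oseledets}), whose eigenvalues are $e^{\lambda_i}$ with multiplicities $d_i$; this \emph{directly} gives $\tfrac1n\log\alpha_j(A(x,n))\to\Lambda_j$ a.e., and the subadditive ergodic theorem applied to $\log\|A(\cdot,n)^{\w k}\|$ then passes the limit inside the integral. If you replace your adapted-basis argument by this input, your proof closes and coincides with the paper's.
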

\begin{proof}
{
Fix $r\in \{0,1,\ldots, p-1\}$ and $s\in (t_r,  t_{r+1}]$. Let $m=\lfloor s \rfloor$.   For any $B\in \R^{d\times d}$, we have
\begin{align}
\phi^s(B)=&\alpha_1(B)\ldots \alpha_{m}(B)\alpha_{m+1}(B)^{s-m}\nonumber\\
=&(\alpha_1(B)\ldots \alpha_{m}(B))^{m+1-s}(\alpha_1(B)\ldots \alpha_{m+1}(B))^{s-m}\nonumber\\
=&\|B^{\w m}\|^{m+1-s}\|B^{\w (m+1)}\|^{s-m}, \label{e-inequality}
\end{align}
 using the fact that $\|B^{\w k}\|=\alpha_1(B)\ldots \alpha_{k}(B)$.
 By \eqref{eq:oseldets-convergence-to-matrix} and the subadditive ergodic theorem,
\begin{equation}
\label{eq:Ruelle}
\lim_{n\to\infty}\frac{1}{n}\int \log \| A(x,n)^{\w k}\|d\mu(x) = \Gamma_r+(k-t_r)\lambda_r, \quad k=m, m+1.
\end{equation}
Now the lemma follows from \eqref{e-inequality} and  \eqref{eq:Ruelle}.
}
\end{proof}

The following theorem contains the main idea in the proof of Theorem \ref{thm:general-result}: after suitable iteration, we may find a ``large'' subsystem which satisfies the cone condition for appropriate exterior powers. The idea of using recurrence in connection with Oseledets' Theorem was inspired by the proof of \cite[Theorem 15]{AvilaBochi02}. Invariant cones in connection with matrix cocycles and Oseledets' Theorem were also recently employed by Kalinin in \cite{Kalinin11}, although the setting there is different.

\begin{theorem} \label{thm:sub-cocycle-cone-condition}
 Fix a locally constant cocycle $A:X\to \R^{d\times d}$, and  an ergodic measure $\mu\in\cE$. Let $\lambda_i, 1\le i\le p$   be the distinct Lyapunov exponents in decreasing order, and write $d_i$ for the multiplicity of $\lambda_i$. Further, let $t_r=\sum_{i=1}^r d_i$.

  Then there exist $\eta>0$, a set $S\subset\N$ with bounded gaps, and families $\{\Sigma_n:n\in S\}$ of finite words such that:
 \begin{enumerate}
 \item \label{it:concatenations-allowed} $\Sigma_n\subset X^*_n$; moreover, concatenations of arbitrary words in $\Sigma_n$ are in $X^*$.
 \item \label{it:measure-bounded-below} $\sum_{{\mathbf i}\in\Sigma_n} \mu[{\mathbf i}]\ge \eta$.

 \item \label{it:cone-condition-holds} Moreover  if $p>1$ (i.e. there are at least two distinct Lyapunov exponents), then there exist cones $K_r, K'_r\subset \R^{\w t_r}$, $1\le r\le p-1$, with $K'_r\setminus\{0\}\subset {\interior}(K_r)$ such that $A^{\w t_r}(x,n)K_r\subset (K'_r\cup -K'_r)$ whenever $n\in S$ and $x|n\in \Sigma_n$.
 \end{enumerate}
\end{theorem}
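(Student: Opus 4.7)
The plan is to combine Oseledets' theorem with a Lusin--Egorov argument, Khintchine's recurrence theorem, and Lemma \ref{lem:cone-condition-general} to construct $S$ and the families $\Sigma_n$. First, apply Theorem \ref{thm:oseledets} to obtain the measurable splitting $\mathbf{E}(x) = \bigoplus_{i=1}^p E_i(x)$ realizing the given Lyapunov spectrum. By Lusin's theorem applied to $\mathbf{E}$, followed by a pigeonhole argument against a finite covering of the compact space $\mathcal{S}_{d_1,\ldots,d_p}$ by $\delta$-balls (where $\delta=\delta(d)$ is supplied by Lemma \ref{lem:cone-condition-general}), one obtains a compact set $X_1\subset X$ with $\mu(X_1)>0$ on which $\mathbf{E}$ is continuous and takes values within a single $\delta$-ball around some fixed reference splitting $\mathbf{V}_0$. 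An orthogonal change of coordinates in $\R^d$ sends $\mathbf{V}_0$ to the canonical splitting $\mathbf{V}^*$, conjugating the cones that will be produced; we may therefore assume $\mathbf{V}_0=\mathbf{V}^*$.

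Next, apply Egorov's theorem to the pointwise (and uniform-on-compacts in $v$) convergence of Theorem \ref{thm:oseledets}(2). By continuity of $\mathbf{E}$ on $X_1$, the unit sphere bundle of each $E_i$ over $X_1$ is compact, and a standard Egorov argument yields a subset $Y\subset X_1$ with $\mu(Y)>0$ and a threshold $N\in\N$ such that, for every $x\in Y$, $n\geq N$, and $1\leq i\leq p$, the restriction $A(x,n)|_{E_i(x)}$ is $(n\lambda_i, n\epsilon)$-conformal, where $\epsilon>0$ is fixed in advance small enough that $2d\epsilon < \min_{r<p}(\lambda_r-\lambda_{r+1})$. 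For any such $n$ and any $x\in Y\cap T^{-n}Y$, both $\mathbf{E}(x)$ and $\mathbf{E}(T^n x)$ lie in the $\delta$-ball around $\mathbf{V}^*$; by Theorem \ref{thm:oseledets}(1), $A(x,n)E_i(x)=E_i(T^n x)$ for each $i\leq p-1$ (where $\lambda_i>-\infty$). If additionally $n$ is large enough that $\sqrt{\binom{d}{\lfloor d/2\rfloor}}\exp(-n(\lambda_r-\lambda_{r+1})+2dn\epsilon)<1/18$ for every $r<p$, then Lemma \ref{lem:cone-condition-general} applies and produces cones $K_r, K_r'\subset \R^{\wedge t_r}$ (conical neighborhoods of $e^{\wedge t_r}$ of fixed aperture, in the conjugated coordinates) such that $A^{\wedge t_r}(x,n)K_r\subset K_r'\cup -K_r'$ for every $r\leq p-1$.

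To pass from ``positive measure set of good $x$'' to ``syndetic set of good $n$'', invoke Khintchine's recurrence theorem: since $\mu(Y)>0$, the set $S_0=\{n\in\N : \mu(Y\cap T^{-n}Y)>\mu(Y)^2/2\}$ is syndetic. Set $S=S_0\cap[N',\infty)$, where $N'\geq N$ is large enough that the numerical hypothesis above is met; intersecting a syndetic set with a cofinite one preserves bounded gaps. For $n\in S$, let $\widetilde{\Sigma}_n=\{x|n : x\in Y\cap T^{-n}Y\}$; the disjoint cylinders $[w]$ for $w\in\widetilde{\Sigma}_n$ together cover $Y\cap T^{-n}Y$, so $\sum_{w\in\widetilde{\Sigma}_n}\mu[w]\geq \mu(Y)^2/2$, and by the previous paragraph each $w$ yields the claimed cone condition on $A^{\wedge t_r}$ (for any $x$ with $x|n=w$). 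Finally, to secure the concatenation property (1), fix a symbol $a$ in the alphabet with $aa$ allowed in $X$ and $\mu([a])>0$ (which can be arranged, up to replacing $T$ by a bounded iterate $T^q$, using irreducibility of the subshift supporting $\mu$), and let $\Sigma_n=\{w\in\widetilde{\Sigma}_n : w_1=w_n=a\}$; arbitrary concatenations of such words then lie in $X^*$, and the measure bound degrades by at most a universal constant factor, yielding $\sum_{w\in\Sigma_n}\mu[w]\geq \eta$ for a uniform $\eta>0$.

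The main obstacle will be the joint handling of these three constraints --- localizing the Oseledets splitting to a $\delta$-neighborhood of $\mathbf{V}^*$, upgrading the Oseledets convergence to one uniform in both $x$ and $v$, and returning to the resulting set along a syndetic sequence of times --- while simultaneously keeping everything inside the SFT so cylinders concatenate. A delicate technical point is the potential non-invertibility of the cocycle when $\lambda_p=-\infty$: the Oseledets equivariance $AE_i=E_i\circ T$ can then fail for $i=p$, but fortunately the cone condition is only asked for $r\leq p-1$, and for these indices Theorem \ref{thm:oseledets}(1) does give the required surjection, so Lemma \ref{lem:cone-condition-general} applies via its action on the top $p-1$ blocks.
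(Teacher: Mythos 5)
Your overall architecture is the same as the paper's: Oseledets' theorem, localization of the splitting near the canonical one, Egorov to make the growth rates uniform on a positive-measure set, Khintchine's recurrence theorem to get a syndetic set of return times, and Lemma \ref{lem:cone-condition-general} to produce the cones. However, the step securing the concatenation property (1) has a genuine gap, and it contaminates the measure bound (2). First, a symbol $a$ with $aa$ allowed and $\mu([a])>0$ need not exist in a subshift of finite type (e.g.\ the two-symbol SFT whose only transitions are $1\to 2\to 1$), and your proposed repair --- replacing $T$ by a bounded iterate $T^q$ --- is not carried out: the theorem's conclusion concerns words of $X^*_n$ and the cocycle $A(x,n)$ for the original shift, so one would have to transfer the whole construction back, and neither the existence of a suitable $q$ nor this transfer is addressed. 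Second, and more seriously, even granting such an $a$, your passage from $\widetilde{\Sigma}_n$ to $\Sigma_n=\{w:\ w_1=w_n=a\}$ discards the Khintchine lower bound: Khintchine controls $\mu(Y\cap T^{-n}Y)$, but there is no uniform-in-$n$ lower bound for $\mu\bigl(Y\cap T^{-n}Y\cap [a]\cap T^{-(n-1)}[a]\bigr)$, so the claim that the measure ``degrades by at most a universal constant factor'' is unsupported; as stated, (2) is not proved.

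The paper's fix is to build the cylinder into $Y$ \emph{before} invoking recurrence, and to exploit the $(n+1)$-st coordinate rather than the $n$-th: choose (by pigeonholing over the finite alphabet) a symbol $a$ with $\mu\bigl(\mathbf{E}^{-1}(B(\mathbf{V}^*,\delta))\cap[a]\bigr)>0$, set $Y=\mathbf{E}^{-1}(B(\mathbf{V}^*,\delta))\cap[a]$, and apply Khintchine to this $Y$. Then every $x\in Y\cap T^{-n}Y$ has $x_1=a$ and $x_{n+1}=a$, so each word of $\Sigma_n$ begins with $a$ and its last letter admits an allowed transition to $a$, witnessed by the point $x$ itself; hence arbitrary concatenations of words in $\Sigma_n$ are allowed without ever requiring $aa$ to be an allowed word, and the measure bound survives intact ($\mu(Y\cap T^{-n}Y\cap Z)\ge \mu(Y)^2/2-\mu(Y)^2/3$ in the paper's bookkeeping with a large Egorov set $Z$, or simply $\ge\mu(Y)^2/2$ in your variant where the Egorov property is built into $Y$). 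With this modification the remainder of your argument --- the Lusin/pigeonhole localization, the orthogonal conjugation to the canonical splitting, the choice of $\e$ and of the threshold so that the hypothesis of Lemma \ref{lem:cone-condition-general} holds for $\tau_i=n\lambda_i$ with tolerance $n\e$, and your remark that only the blocks $r\le p-1$ matter when $\lambda_p=-\infty$ --- matches the paper's proof.
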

\begin{proof}
After a change of coordinates (which does not affect the statement) we may assume that the canonical splitting $\mathbf{V}^*$ is in the support of the  push-down measure $\mathbf{E}_*\mu$ on the space of splittings. Let $\delta$ be the constant from Lemma \ref{lem:cone-condition-general}. Then $\mu(\mathbf{E}^{-1}(B(\mathbf{V}^*,\delta)))>0$, and we can pick a symbol $a$  in the alphabet of $X$  such that $\mu(Y)>0$, where $Y=\mathbf{E}^{-1}(B(\mathbf{V}^*,\delta))\cap [a]$.

Let $\e=\frac18\min_{r=1}^{p-1} (\lambda_{r+1}-\lambda_r)$. By Egorov's Theorem and Theorem \ref{thm:oseledets}, there are $N_0\in\N$ and a set $Z\subset X$ with
\[
\mu(Z)>1-\frac{1}{3}\mu(Y)^2,
\]
such that if $x\in Z$ and $n\ge N_0$, then
\[
|A(x,n)v| \in [\exp(n(\lambda_i-\eps))|v|,\exp(n(\lambda_i+\eps))|v|]\quad \text{for all }v\in E_i(x),
\]
for all $i$ such that $\lambda_i>-\infty$. By making $N_0$ larger if necessary, we may assume that $N_0\ge 2d$ and $\exp(N_0\eps) \ge \sqrt{d\choose {\lfloor d/2\rfloor }}$.

By Khintchine's recurrence theorem (see e.g. \cite[Theorem 3.3]{Petersen83}), the set
\[
S:=[N_0,\infty) \cap \{n: \mu(Y\cap T^{-n}Y)> \mu(Y)^2/2\}
\]
has bounded gaps. We let
\[
\Sigma_n=\{ x|n: x\in Y\cap T^{-n}Y\cap Z\}.
\]
By definition, each word in $\Sigma_n$ is allowed and has length $n$. Moreover, since all sequences in $Y$ start with $a$, if $x\in Y\cap T^{-n}Y$ then $x_1=a$ and the transition $x_n\to a$ is allowed in $X$. This shows that concatenations of words in $\Sigma_n$ are allowed. Thus \eqref{it:concatenations-allowed} holds. Also,
\[
\sum_{{\mathbf i}\in\Sigma_n} \mu[{\mathbf i}] \ge \mu(Y\cap T^{-n}Y\cap Z)\ge \frac{\mu(Y)^2}{6}=:\eta>0,
\]
which yields \eqref{it:measure-bounded-below}.

Next assume that $p>1$.
Taking stock, if $x\in Y\cap T^{-n}Y\cap Z$ and $n\ge N_0$, then $A(x,n)$ satisfies the assumptions of Lemma \ref{lem:cone-condition-general}, with $\mathbf{V}=\mathbf{E}(x)$, $\mathbf{W}=\mathbf{E}(T^n x)$ and $\tau_i=n\lambda_i$. Hence $A^{\w t_r}(x,n)K_r\subset (K'_r\cup  -K'_r)$ for all $1\le r\le p-1$,  with $K'_r=\cC(e^{\w t_r},1/4)$ and $K_r=\cC(e^{\w t_r},1/3)$. Hence \eqref{it:cone-condition-holds} is satisfied, and this concludes the proof.
\end{proof}

\section{Sub-additive thermodynamic formalism and proofs of Theorems \ref{thm:continuity-pressure}-\ref{thm:general-result}}
\label{sec:sub-proofs}

\subsection{Variational principle for sub-additive pressure}
\label{subsec:TF}

In order to prove our main results,  we require some elements from the sub-additive thermodynamic formalism.

As before, let $(X, T)$ be a subshift of finite type.  A sequence $\cF=\{\log f_n\}$ of functions on $X$ is said to be a \emdef{sub-additive potential} if
$$0\leq f_{n+m}(x)\le f_n(x)f_m(T^n x)$$ for all $x\in X$ and $n,m\in\N$. The \emdef{topological pressure} of a sub-additive potential $\cF$ (with respect to the shift $T$)  is defined as
$$P(T, \cF)= \lim_{n\to\infty} \frac{1}{n}\log\left( \sum_{\mathbf{i}\in X^*_n} \sup_{y\in [\mathbf{i}]} f_n(y) \right).
$$
The limit can be seen to exist by using a standard sub-additivity argument. We remark that we allow the functions $f_n$ to take the value $0$.

\begin{example}\begin{itemize}
\item[(i)] Given $g\in C(X)$, $s\geq 0$ and a matrix cocycle $A:x\to \R^{d\times d}$, define $\cF=\{\log f_n\}$ by
$
f_n(x)=\exp(S_ng(x))\phi^s(A(x,n))
$. Then $\cF$ is a sub-additive potential and $P(T, \cF)$ recovers the quantity $P_g(A,s)$ introduced in Section~\ref{sec:introduction}.
 \item[(ii)] Let $X=\{1,\ldots, k\}^\N$ be a full shift and  $A$ a locally constant matrix cocycle on $X$. For $s\geq 0$, set $\cF=\{\log f_n\}$ by
 $
f_n(x)=\|A(x,n)\|^s
$. Then $P(T, \cF)$ recovers the quantity $M(\bA,s)$ defined in Section~\ref{sec:introduction}.
\end{itemize}
\end{example}

Next we  prove a simple semi-continuity result.

\begin{lemma} \label{lem:general-semicontinuity}
Let $\mathcal{F}=\{\log f_n\}$, $\mathcal{G}^{(k)}=\{ \log g_n^{(k)}\}, k\in\N$, be sub-additive potentials on a common subshift of finite type $X$. Suppose that for each $n\in\N$, there exists a sequence of positive numbers $(\delta_k)\searrow 0$ (depending on $n$) such that $g_n^{(k)}(x)-f_n(x)\leq \delta_k$ for $x\in X$ and $k\in \N$. Then $\limsup_{k\to\infty} P(T, \mathcal{G}^{(k)}) \le P(T, \mathcal{F})$.
\end{lemma}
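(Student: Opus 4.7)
The plan is to exploit the subadditivity of the sequences defining the pressure so that the pressure of $\mathcal{G}^{(k)}$ is bounded above by any finite-$n$ partition sum, and then use the pointwise hypothesis to push $k\to\infty$ before letting $n\to\infty$.

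First I would note that, by the standard subadditivity argument recorded implicitly in the definition of $P(T,\cdot)$, the sequence
\[
a_n(\mathcal{H}) := \log\Bigl( \sum_{\mathbf{i}\in X^*_n} \sup_{y\in[\mathbf{i}]} h_n(y) \Bigr)
\]
is subadditive in $n$ whenever $\mathcal{H}=\{\log h_n\}$ is a subadditive potential (using that $h_{n+m}(y)\le h_n(y)h_m(T^ny)$ and splitting the supremum over the concatenation). Consequently
\[
P(T,\mathcal{H}) = \lim_{n\to\infty} \tfrac{1}{n} a_n(\mathcal{H}) = \inf_{n\ge 1} \tfrac{1}{n} a_n(\mathcal{H}),
\]
where we adopt the convention $\log 0 = -\infty$ (so the case of vanishing partition sums is handled automatically). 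In particular, $P(T,\mathcal{G}^{(k)})\le \frac{1}{n}a_n(\mathcal{G}^{(k)})$ for every $n$ and every $k$.

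Next, fix $n\in\N$. By hypothesis there exists a sequence $\delta_k=\delta_k(n)\searrow 0$ with $g_n^{(k)}(x)\le f_n(x)+\delta_k$ for all $x\in X$. Taking sup over $[\mathbf{i}]$ and summing over $\mathbf{i}\in X^*_n$ gives
\[
\sum_{\mathbf{i}\in X^*_n} \sup_{y\in[\mathbf{i}]} g_n^{(k)}(y) \;\le\; \sum_{\mathbf{i}\in X^*_n} \sup_{y\in[\mathbf{i}]} f_n(y) \;+\; (\#X^*_n)\,\delta_k.
\]
Since $n$ is fixed, $\#X^*_n<\infty$ and $\delta_k\to 0$, so letting $k\to\infty$ on the right and using monotonicity and continuity of $\log$ on $[0,\infty]$,
\[
\limsup_{k\to\infty} \tfrac{1}{n} a_n(\mathcal{G}^{(k)}) \;\le\; \tfrac{1}{n} a_n(\mathcal{F}).
\]
Combining with $P(T,\mathcal{G}^{(k)})\le \frac{1}{n}a_n(\mathcal{G}^{(k)})$ yields
\[
\limsup_{k\to\infty} P(T,\mathcal{G}^{(k)}) \;\le\; \tfrac{1}{n} a_n(\mathcal{F}) \qquad\text{for every } n\in\N.
\]

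Finally, taking the infimum over $n\ge 1$ on the right-hand side and invoking $\inf_n \frac{1}{n}a_n(\mathcal{F}) = P(T,\mathcal{F})$ gives the desired inequality
\[
\limsup_{k\to\infty} P(T,\mathcal{G}^{(k)}) \;\le\; P(T,\mathcal{F}).
\]
There is no real obstacle here; the only point deserving care is that the $\delta_k$ depend on $n$, so one must perform $k\to\infty$ with $n$ held fixed and only then take $\inf_n$. The degenerate case $P(T,\mathcal{F})=-\infty$ is handled transparently by the convention $\log 0=-\infty$, since then $a_n(\mathcal{F})=-\infty$ for all sufficiently large $n$ and the right-hand side above is $-\infty$.
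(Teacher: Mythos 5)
Your proof is correct and follows essentially the same route as the paper: bound $P(T,\mathcal{G}^{(k)})$ by the level-$n$ partition sum via the infimum characterization coming from subadditivity, use the uniform bound $g_n^{(k)}\le f_n+\delta_k$ at fixed $n$ to let $k\to\infty$ (your explicit factor $\#X^*_n$ is what the paper absorbs into its $\epsilon_k$), and only then let $n\to\infty$. Nothing is missing; your remarks on the order of limits and the $\log 0=-\infty$ convention only make explicit what the paper leaves implicit.
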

\begin{proof}
Let $n\in \N$. Then there exists a sequence of positive numbers $(\epsilon_k)\searrow 0$  such that
$$
\sum_{\mathbf{i}\in X^*_n} \sup_{y\in [\mathbf{i}]} g_n^{(k)}(y)\leq \epsilon_k+\sum_{\mathbf{i}\in X^*_n} \sup_{y\in [\mathbf{i}]} f_n(y).
$$
 Since the $\limsup$ in the definition of the pressure is an infimum, we have $$P(T,\mathcal{G}^{(k)})\le  (1/n)\log \left(\epsilon_k+\sum_{\mathbf{i}\in X^*_n} \sup_{y\in [\mathbf{i}]} f_n(y)\right).$$
Hence $\limsup_{k\to\infty} P(T, \mathcal{G}^{(k)})\leq (1/n)\log \sum_{\mathbf{i}\in X^*_n} \sup_{y\in [\mathbf{i}]} f_n(y)$. Letting $n\to \infty$, we obtain the desired inequality.
\end{proof}

%\begin{note}
%{\crr In Theorem 1.3, it is possible that $f_n(x)=0$ but $g_n^{(k)}(x)\neq 0$ (for the case that $A$ is non-invertible); hence we don't have $g_n^{(k)}/f_n\to 1$ as $k\to %\infty$.}
%\end{note}

 For $\mu \in\cE$, let $h_\mu$ denote the measure-theoretic entropy of $\mu$ (cf. \cite{Bowen75}).  Our proof of Theorem \ref{thm:general-result} depends on the following general variational principle for sub-additive potentials.
\begin{theorem}[\cite{CFH08}, Theorem 1.1] \label{thm:sub-additive-VP}
Let $(X,T)$ be a subshift of finite type and  $\mathcal{F}=\{\log f_n\}$  a sub-additive potential on $X$. Assume that  $f_n$ is continuous on $X$ for each $n$. Then
\begin{equation}
\label{variational-principle}
P(T, \mathcal{F}) = \sup\left\{ h_\mu + \lim_{n\to\infty} \frac{1}{n} \int \log(f_n(x)) d\mu(x)  : \mu\in\cE \right\}.
\end{equation}
\end{theorem}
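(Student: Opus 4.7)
My plan is to prove the two inequalities of the variational principle separately, following the standard program for sub-additive thermodynamic formalism. Write $\chi_*(\mu,\mathcal{F}) := \lim_{n\to\infty} \tfrac{1}{n}\int \log f_n\, d\mu$; the limit exists in $[-\infty,\infty)$ by sub-additivity of $n\mapsto \int \log f_n\, d\mu$ (Fekete's lemma). The easy direction, namely $P(T,\mathcal{F}) \geq h_\mu + \chi_*(\mu,\mathcal{F})$ for every $\mu\in\cE$, follows from a counting argument combined with Shannon--McMillan--Breiman and Kingman's sub-additive ergodic theorem: for any $\eps>0$ and all sufficiently large $n$, there is a set of $\mu$-measure at least $1/2$ on which simultaneously $\mu([x|n]) \leq \exp(-n(h_\mu-\eps))$ and $f_n(x) \geq \exp(n(\chi_*(\mu,\mathcal{F})-\eps))$. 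Summing over the at most $\exp(n(h_\mu+\eps))$ cylinders that carry this set, and bounding each $\sup_{y\in [\mathbf{i}]} f_n(y) \geq f_n(x)$, yields $\sum_{\mathbf{i}\in X_n^*} \sup_{y\in[\mathbf{i}]} f_n(y) \geq \tfrac{1}{2}\exp(n(h_\mu+\chi_*(\mu,\mathcal{F})-2\eps))$; taking $\log$, dividing by $n$, and letting $n\to\infty$ and $\eps\to 0$ gives the bound. (A minor modification is required when $\chi_*(\mu,\mathcal{F})=-\infty$, which is handled by truncating $\log f_n$ from below.)

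For the reverse inequality, I would use the classical weighted-measure construction. For each $n$, let $Z_n = \sum_{\mathbf{i}\in X_n^*} \sup_{y\in[\mathbf{i}]} f_n(y)$, pick $y_{\mathbf{i}}\in[\mathbf{i}]$ (nearly) attaining the supremum, and form the probability measure $\sigma_n = Z_n^{-1} \sum_{\mathbf{i}} \sup_{y\in[\mathbf{i}]} f_n(y)\, \delta_{y_{\mathbf{i}}}$, together with its $T$-invariant average $\mu_n = \tfrac{1}{n}\sum_{j=0}^{n-1} T^j_* \sigma_n$. Choose a subsequence along which $\tfrac{1}{n_k}\log Z_{n_k}\to P(T,\mathcal{F})$ and $\mu_{n_k}\weakly \mu$ for some $T$-invariant probability measure $\mu$. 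A direct partition-entropy computation (using the convention $0\log 0 = 0$) gives $\mathcal{H}_{\sigma_{n_k}}(\alpha_{n_k}) \geq \log Z_{n_k} - \int \log f_{n_k}\, d\sigma_{n_k}$, where $\alpha_n$ denotes the partition of $X$ into $n$-cylinders. Standard manipulations using Abramov's formula (choosing an auxiliary divisor $m \ll n_k$ and splitting $[0,n_k)$ into blocks of length $m$) together with the upper semi-continuity of the entropy map $\nu\mapsto h_\nu$ on the expansive system $(X,T)$ yield $h_\mu \geq \limsup_k \tfrac{1}{n_k}\mathcal{H}_{\sigma_{n_k}}(\alpha_{n_k}) - \tfrac{1}{m}\int \log f_m\, d\mu + \tfrac{1}{m}\limsup_k \int\log f_m\, d\mu_{n_k}$, up to error terms that vanish as $m\to\infty$.

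The main obstacle is the sub-additive integral: the functional $\nu\mapsto \chi_*(\nu,\mathcal{F})$ is generally not upper semi-continuous, so one cannot simply pass to the limit $\int \log f_{n_k}\, d\sigma_{n_k}$. The key trick is to use sub-additivity to bound, for each fixed $m$, $\log f_{n_k}(x) \leq \sum_{j=0}^{\lfloor n_k/m\rfloor - 1} \log f_m(T^{jm}x) + R_{k,m}(x)$ where $R_{k,m}$ is controlled by at most $m$ extra factors and is $o(n_k)$ uniformly. Integrating against $\sigma_{n_k}$ and rewriting the sum in terms of $\mu_{n_k}$ gives $\tfrac{1}{n_k}\int \log f_{n_k}\, d\sigma_{n_k} \leq \tfrac{1}{m}\int \log f_m\, d\mu_{n_k} + o(1)$. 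Here I use the continuity of $f_m$ (which is the hypothesis) together with boundedness from above by $\|f_m\|_\infty$, plus a truncation $\log f_m \vee (-M)$ to handle possible zeros of $f_m$, to pass to the weak-$*$ limit: $\limsup_k \tfrac{1}{n_k}\int \log f_{n_k}\, d\sigma_{n_k} \leq \tfrac{1}{m}\int \log f_m\, d\mu$, and then $m\to\infty$ gives $\chi_*(\mu,\mathcal{F})$ on the right. Combining this with the entropy estimate produces an invariant $\mu$ with $h_\mu + \chi_*(\mu,\mathcal{F})\geq P(T,\mathcal{F})$; finally the ergodic decomposition theorem, together with the affinity of both $\nu\mapsto h_\nu$ and $\nu\mapsto \chi_*(\nu,\mathcal{F})$ on the simplex of invariant measures, guarantees that the supremum in \eqref{variational-principle} is already attained over $\cE$.
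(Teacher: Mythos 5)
A point of reference first: the paper does not prove this statement at all --- it is quoted from \cite{CFH08} (where it is proved for arbitrary continuous maps on compact metric spaces, not only subshifts of finite type), so the comparison is with the proof in the cited source. Your outline follows the same standard two-step strategy as that proof: the inequality $P(T,\mathcal{F})\ge h_\mu+\chi_*(\mu,\mathcal{F})$ for ergodic $\mu$ via Shannon--McMillan--Breiman, Kingman and Egorov (the phrase ``at most $\exp(n(h_\mu+\eps))$ cylinders'' should read ``at least $\tfrac12\exp(n(h_\mu-\eps))$ cylinders'', but your displayed conclusion is the correct one), and the reverse inequality via the Misiurewicz-type construction $\sigma_n\mapsto\mu_n$, the identity $\mathcal{H}_{\sigma_n}(\alpha_n)=\log Z_n-\int\log f_n\,d\sigma_n$, sub-additive block splitting, and passage to the weak-$*$ limit after truncating $\log f_m\vee(-M)$.

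The one step you state too quickly is exactly the point that makes the sub-additive case with zeros delicate --- and it matters for this paper, since $f_n(x)=\exp(S_ng(x))\phi^s(A(x,n))$ can vanish when the matrices are singular: the claim that ``rewriting the sum in terms of $\mu_{n_k}$'' yields $\tfrac1{n_k}\int\log f_{n_k}\,d\sigma_{n_k}\le\tfrac1m\int\log f_m\,d\mu_{n_k}+o(1)$. If you use only the offset-$0$ blocks, the sum $\sum_j\log f_m(T^{jm}x)$ corresponds to the measure $\tfrac1q\sum_j T^{jm}_*\sigma_{n_k}$, not to $\mu_{n_k}$; and if you average over all offsets $l=0,\dots,m-1$ to produce the full Birkhoff average, the incomplete blocks at the end contribute terms $\log f_m(T^jx)$ with $j>n_k-m$, which may be $-\infty$ even though $f_{n_k}(x)>0$, so the displayed inequality can fail as written (its right-hand side can be $-\infty$ while the left-hand side is finite). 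The standard repair --- effectively what the cited proof does --- is to compare with the average over complete blocks only, i.e.\ with $\tfrac1{n_k-m+1}\sum_{j=0}^{n_k-m}T^j_*\sigma_{n_k}$, which has the same weak-$*$ limit as $\mu_{n_k}$, and only then truncate and pass to the limit. A second, smaller point: to reduce the supremum to ergodic measures you need the integral form $\chi_*(\mu,\mathcal{F})=\int\chi_*(\nu,\mathcal{F})\,dm(\nu)$ over the ergodic decomposition (obtained from Kingman's theorem, using that $\log f_n$ is bounded above), not merely affinity under finite convex combinations. Both are routine fixes, so your plan is sound, but these are precisely the places where the general sub-additive setting requires care beyond the additive or almost-additive arguments.
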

Although in \cite{CFH08} this is proved for potentials on an arbitrary continuous dynamical system on a compact space, we state it only for subshifts of finite type. Particular cases of the above result, under stronger assumptions on the potentials, were previously obtained by many authors, see for example \cite{Falconer88b, Kaenmaki04, Mummert06, Barreira10} and references therein.

Measures that achieve the supremum in \eqref{variational-principle}  are called \emdef{ergodic equilibrium measures} for the potential $\cF$. The existence of ergodic equilibrium measures was proved by K\"{a}enm\"{a}ki \cite{Kaenmaki04} under fairly general conditions (but more restrictive than the above theorem). Nevertheless this existence result still holds  under the general setting of Theorem \ref{thm:sub-additive-VP}, thanks to the semicontinuity of entropy (see, e.g., \cite[Proposition 3.5]{Feng11} and the remark there). In this setting, equilibrium measures do not need to be unique.

\subsection{Proof of Theorem \ref{thm:general-result}} We first give a simple lemma.

\begin{lemma}
\label{SVF-inequality}
Let $B\in \R^{d\times d}$. Let $m, n$ be two integers with $0\leq m<n\leq d$. Then for any
$s\in [m,n]$, we have
$$
\phi^s(B)\geq \left(\phi^m(B)\right)^{\frac{n-s}{n-m}} \left(\phi^n(B)\right)^{\frac{s-m}{n-m}}.
$$
In particular, $\phi^s(B)\geq \left(\phi^n(B)\right)^{\frac{s}{n}}$.
\end{lemma}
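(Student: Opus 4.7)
The plan is to recognize the inequality as an expression of concavity of the map $s \mapsto \log \phi^s(B)$ on $[0,d]$.

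First I would fix $B$ and set $a_k = \log \alpha_k(B) \in [-\infty,\infty)$ for $k=1,\ldots,d$, recalling that $a_1 \ge a_2 \ge \cdots \ge a_d$. Writing $s = j + t$ with $j = \lfloor s \rfloor$ and $t \in [0,1)$, the definition of $\phi^s$ rewrites as
\[
\log \phi^s(B) \;=\; \sum_{i=1}^{j} a_i \;+\; t\, a_{j+1}.
\]
Thus $s \mapsto \log \phi^s(B)$ is the continuous piecewise affine function on $[0,d]$ that takes the value $\sum_{i=1}^{k} a_i$ at each integer $s=k$, with slope $a_{k+1}$ on each interval $[k,k+1]$.

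Next I would invoke the fact that since the singular values are arranged in decreasing order, the sequence of slopes $(a_{k+1})_{k=0}^{d-1}$ is non-increasing, so the piecewise affine function above is concave on $[0,d]$. Concavity on any subinterval $[m,n]$ with integer endpoints then gives
\[
\log \phi^s(B) \;\ge\; \frac{n-s}{n-m} \log \phi^m(B) \;+\; \frac{s-m}{n-m} \log \phi^n(B),
\]
which exponentiates to the stated inequality. The ``in particular'' claim falls out by setting $m=0$, since $\phi^0(B)=1$ by convention.

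The main thing to watch for is the case where some $\alpha_i(B)=0$, so that some $a_i = -\infty$. This is benign: if $\phi^n(B)=0$ and $s>m$, the right-hand side vanishes (with the convention $0^\alpha = 0$ for $\alpha>0$ and $0^0=1$ as in the paper) and there is nothing to prove; if additionally $\phi^m(B)=0$, which is automatic once $\phi^n(B)=0$ since $k \mapsto \phi^k$ is non-increasing on integers, the right-hand side still vanishes. So there is no real obstacle — the whole lemma reduces to the concavity of a piecewise affine function whose slopes decrease, a completely elementary fact.
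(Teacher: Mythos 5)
Your proof is correct, but it takes a different (more conceptual) route than the paper. The paper argues directly with singular values: writing $p=\lfloor s\rfloor$, it uses $\phi^n(B)=\alpha_1\cdots\alpha_n\le \alpha_1\cdots\alpha_p\,(\alpha_{p+1})^{n-p}$ and then checks by a short computation that the ratio $\left(\phi^s(B)\right)^{n-m}\big/\bigl(\left(\phi^m(B)\right)^{n-s}\left(\phi^n(B)\right)^{s-m}\bigr)$ reduces to $(\alpha_{m+1}\cdots\alpha_p)^{n-s}/(\alpha_{p+1})^{(n-s)(p-m)}\ge 1$, using only that $\alpha_{m+1},\ldots,\alpha_p\ge\alpha_{p+1}$. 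Your argument packages the same monotonicity of singular values into the statement that $s\mapsto\log\phi^s(B)$ is piecewise affine with non-increasing slopes $\log\alpha_{k+1}(B)$, hence concave, and then reads off the inequality from concavity on $[m,n]$. Interestingly, the paper itself remarks right after its proof that concavity of $s\mapsto\log\phi^s(B)$ would extend the lemma to non-integer $m,n$ but that this is not needed; so your approach proves this stronger statement essentially for free, whereas the paper's bare-hands ratio computation is tailored to integer endpoints. You are also more careful than the paper about the degenerate case of vanishing singular values (where some $\log\alpha_i=-\infty$, or where the paper's ratio would formally be $0/0$); your case split with the conventions $0^0=1$, $0^\alpha=0$ for $\alpha>0$ disposes of this correctly (and the remaining boundary case $s=m$ with $\phi^n(B)=0$ is a trivial equality). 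Both proofs are complete and elementary; yours trades a three-line computation for a reusable structural observation.
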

\begin{proof}
Let $\alpha_1\geq \cdots\geq  \alpha_d$ be the square roots of the eigenvalues of $B^*B$, and let $p=\lfloor s\rfloor$. Then
$\phi^n(B)=\alpha_1\ldots \alpha_n\leq \alpha_1\ldots\alpha_p (\alpha_{p+1})^{n-p}$. It follows that
\begin{align*}
\frac{\left(\phi^s(B)\right)^{n-m}}{\left(\phi^m(B)\right)^{{n-s}} \left(\phi^n(B)\right)^{{s-m}}}&\geq \frac{(\alpha_1\cdots \alpha_p(\alpha_{p+1})^{s-p})^{n-m}}{\left(\alpha_1\cdots \alpha_m\right)^{n-s}(\alpha_1\cdots \alpha_p(\alpha_{p+1})^{n-p})^{s-m}}\\
 &=\frac{(\alpha_{m+1}\cdots \alpha_p)^{n-s}}{(\alpha_{p+1})^{(n-s)(p-m)}}\geq 1.
\end{align*}
That is, $
\phi^s(B)\geq \left(\phi^m(B)\right)^{\frac{n-s}{n-m}} \left(\phi^n(B)\right)^{\frac{s-m}{n-m}}
$. Taking $m=0$, we obtain $\phi^s(B)\geq \left(\phi^n(B)\right)^{\frac{s}{n}}$.
\end{proof}

By the concavity of $s\to \log \phi^s(B)$ over $[0,d]$, the above lemma can be extended to non-integer $m, n$, but we will not require this.

\begin{proof}[Proof of Theorem \ref{thm:general-result}(1)]
If $s\geq d$,  $P_{g}(A,s)$ can be viewed as the classical topological pressure (in the additive setting) of the potential
\[
g(x)\log|\det(A(x))|^{s/d}.
\]
Since this potential is uniformly continuous in $A$ (in the $L^\infty$ topology) the continuity of the map $A\to P_{g}(A,s)$ follows by a standard argument. Hence in the following we assume that $0<s< d$.

It follows from an application of Lemma \ref{lem:general-semicontinuity} that $P_g(\cdot,s)$ is upper semi-continuous at $A$, so we only need to prove the lower semi-continuity. Let $A:X\to\R^{d\times d}$ be locally constant and $s>0$. If $P_{g}(A,s)=-\infty$, there is nothing to prove, so assume that $P_{g}(A,s)>-\infty$.

Fix $\e>0$. By the variational principle (Theorem \ref{thm:sub-additive-VP}) and subadditivity, there is $\mu\in\cE$ such that
\begin{equation}
P_{g}(A,s) -\e \le h_\mu + \int g d\mu+E_\mu(A,s), \label{eq:variational-principle}
\end{equation}
where
\[
E_\mu(A,s) = \lim_{n\to\infty}\frac{1}{n}\int \log \phi^s(A(x,n))\, d\mu(x).
\]

 Let $\lambda_i,d_i, t_i$ ($1\leq i\leq p$) be as in Theorem \ref{thm:sub-cocycle-cone-condition}. For convenience set $t_0=0$.  There is a unique $r\in \{0,1,\ldots, p-1\}$ such that $t_r < s\le t_{r+1}$. Note that $\lambda_{r+1}>-\infty$, for otherwise we would have $E_\mu(A,s)=-\infty$ and hence $P_{g}(A,s)=-\infty$, contrary to our assumption.

 Let $\eta, S$ and $\{\Sigma_n:n\in S\}$ be as in Theorem \ref{thm:sub-cocycle-cone-condition}. Write $\Gamma_m = \sum_{i=1}^m d_i\lambda_i$ for $m=1,\ldots,  p$ and set $\Gamma_0=0$ for   convention.
Recall from Lemma \ref{lem:oseledets-gives-Lyapunov-for-SV} that
\begin{equation} \label{eq:lyapunov-exponents-relation}
E_\mu(A,s)= \Gamma_r + (s-t_r)\lambda_{r+1}.
\end{equation}

By Egorov's Theorem, the Shannon-McMillan-Breiman Theorem, and the sub-additive ergodic Theorem,  there exist $n\in S$ and a set $\Delta_n\subset \Sigma_n\subset X^*_n$, such that
\begin{equation}\label{eq:large-size}
\sum_{\mathbf{i}\in\Delta_n} \mu[\mathbf{i}] > \eta/2
\end{equation}
and, if $x|n\in\Delta_n$, then
\begin{align}
S_ng(x) &> n(\mu(g)-\e),\label{eq:g-close-to-lyapunov-exp}\\
\mu[x|n] &< \exp(n (\e-h_\mu)),\label{eq:small-measure}\\
  \exp(n(\Gamma_m-\e)) &< \phi^{t_m}(A(x,n)) < \exp(n(\Gamma_m+\e))\quad \mbox{ for  }m=1, \ldots, p-1,\label{eq:large-SV-1}\\
\phi^{t_{p}}(A(x,n)) &>  \exp(n(\Gamma_{p}-\e)).\label{eq:large-SV-2}
\end{align}

It follows from \eqref{eq:large-size} and \eqref{eq:small-measure} that
\begin{equation} \label{eq:lower-bound-size}
\#\Delta_n \ge \frac{\eta}{2} \exp(n(h_\mu-\e)).
\end{equation}
Denote by $\Delta_n^\ell\subset X^*_{\ell n}$ the family of juxtapositions of $\ell$ words in $\Delta_n$.

First assume that  $p=1$. In this case, the cocycle $A$ has a single Lyapunov exponent $\lambda=\lambda_1$ with respect to $\mu$.   It follows from Lemma \ref{lem:oseledets-gives-Lyapunov-for-SV} that $E_\mu(A,s)=s\lambda$. Meanwhile, since $t_1=d$ in this case, we have  $\phi^{t_1}(M)=|\det(M)|$ for any $M\in \R^{d\times d}$.  As
 $\Delta_n$ is finite, we can find a $L^\infty$ neighborhood $\mathcal{U}$ of $A$ in $\cM(X, d)$ such that if $B\in\mathcal{U}$, then
\[
\phi^d(B(x, n)) \ge e^{-n\e} \phi^d(A(x,n))\geq e^{nd(\lambda-2\e) }, \mbox{ if } x|n\in \Delta_n.
\]
Therefore
\[
\phi^d(B(x,kn)) \ge e^{\ell nd(\lambda-2\e)}, \mbox{ if }x|kn\in \Delta_n^\ell.
\]
By Lemma \ref{SVF-inequality}, we have
\[
\phi^s(B(x, \ell n)) \ge \left(\phi^d(B(x,kn))\right)^{\frac{s}{d}}\geq e^{\ell ns(\lambda-2\e)}\quad\text{if }x|\ell n\in \Delta_n^\ell.
\]
Given $x\in X^*$, let $\overline{x}\in X$ be any infinite word starting with $x$. With this information, and also using \eqref{eq:variational-principle}, \eqref{eq:g-close-to-lyapunov-exp} and \eqref{eq:lower-bound-size}, we can conclude:
\begin{align}
P_{g}(B,s) &\ge \limsup_{\ell\to\infty} \frac{1}{n\ell} \log\left(\sum_{x\in \Delta_n^\ell}\exp(S_{\ell n}g(\overline{x})) \phi^s(B(\overline{x},\ell n))\right)\nonumber \\
&\geq \mu(g)+h_\mu+s\lambda-((2+2s)\e+C/n) \nonumber\\
&\ge  P_g(A,s) -((3+2s)\e+C/n)  \nonumber
\end{align}
for some constant $C>0$ independent of $B$, $\mathcal{U}$, and $n$.  Since $(3+2s)\e+C/n$ can be made arbitrarily small, this establishes the lower semicontinuity for the case $p=1$.

Next we assume that $p>1$.
 It follows from Theorem \ref{thm:sub-cocycle-cone-condition} (and the fact that $\Delta_n\subset\Sigma_n$) that the families $\{ A^{\w t_m}(y,n): y|n \in \Delta_n\}$ satisfy the cone condition with some cones $\overline{K_m},\overline{K_m}'$ for $m=1,\ldots, p-1$. By the robustness of the cone condition, there is an $L^\infty$ neighborhood $\cU$ of $A$ such that if $B:X\to\R^{d\times d}\in\cU$, then $\{ B^{\w t_m}(y,n) :y|n \in \Delta_n\}$ satisfy the cone condition with cones $K_m, K'_m$ (obtained from perturbing $\overline{K_m},\overline{K_m}'$ slightly), which do not depend on $B$, $n$, or the particular neighborhood (as long as it is small enough). Since the conditions \eqref{eq:large-SV-1} (for $m=1,\ldots, p$)  are $L^\infty$-open, by making $\mathcal{U}$ smaller we can ensure that they continue to hold for all $B\in\mathcal{U}$ in place of $A$.

We know from \eqref{eq:large-SV-1} and Lemma \ref{lem:cone-implies-almost-multiplicative} that, if $B\in\mathcal{U}$ and $x|\ell n\in\Delta_n^\ell$, then
\begin{align}
 c^\ell \exp\left(n\ell(\Gamma_m-\e)\right) &\le \phi^{t_m}(B(x,{ \ell }n)) \le \exp\left(n\ell(\Gamma_m+\e)\right)\mbox { for }m=1,\ldots, p-1,\label{eq:large-SV-B-1}
 \end{align}
where $c>0$ is independent of $n$ and $B$.
  As $t_p=d$ and $\phi^d(M)=|\det(M)|$ for $M\in \R^{d\times d}$, by \eqref{eq:large-SV-2} we also have
 \begin{equation}
\phi^{t_{p}}(B(x,{ \ell } n)) \ge \exp\left(n\ell(\Gamma_{p}-\e)\right)\label{eq:large-SV-B-1'}
\end{equation}
for $B\in\mathcal{U}$ and $x|\ell n\in\Delta_n^\ell$. Since $t_r<s\leq t_{r+1}$, by Lemma \ref{SVF-inequality}, \eqref{eq:large-SV-B-1} and \eqref{eq:large-SV-B-1'}
\begin{align} \label{eq:large-SV-B-2}
\phi^s (B(x,\ell n)) &\ge \left(\phi^{t_{r}}(B(x,{\ell }n)) \right)^{\frac{t_{r+1}-s}{d_{r+1}} }  \left( \phi^{t_{r+1}}(B(x,{\ell }n))\right)^{\frac{s-t_r}{d_{r+1}}}\nonumber\\
&\ge c^\ell \exp(n\ell [\Gamma_r+(s-t_r)\lambda_{r+1}-2\e]).
\end{align}
Putting together \eqref{eq:variational-principle}, \eqref{eq:g-close-to-lyapunov-exp},  and \eqref{eq:large-SV-B-2},  we estimate (always assuming $B\in\mathcal{U}$, $x|n\ell\in \Delta_n^\ell$)
\begin{equation}\label{eq:large-SV-B-3}
\exp(S_{\ell n}g(x))\phi^s(B(x,{\ell }n)) \ge c^{2\ell}\exp\left( n\ell\left( P_g(A,s) - h_\mu - 4\e\right) \right).
\end{equation}

%\begin{note}
%{\crr It looks that in the above equality  we have to use an additional assumption that the potential $\cF=(f_n)$ is ``nearly'' multiplicative. }
%\end{note}

Given $x\in X^*$, let $\overline{x}\in X$ be any infinite word starting with $x$. With this information, and also using \eqref{eq:lower-bound-size}, we can conclude:
\begin{align}
P_{g}(B,s) &\ge \limsup_{\ell\to\infty} \frac{1}{n\ell} \log\left(\sum_{x\in\Delta_n^\ell}\exp(S_{\ell n}g(\overline{x})) \phi^s(B(\overline{x},\ell n))\right)\nonumber \\
&\ge \limsup_{\ell\to\infty} \frac{1}{n\ell} \log\left((\eta/2)^\ell \exp(n\ell(h_\mu-\e)) \cdot c^{2\ell} \exp\left( n\ell\left( P_g(A,s) -h_\mu-4\e\right) \right)\right) \nonumber\\
&\ge P_{g}(A,s) - \left(5\e+C/n\right)  \label{eq:large-SV-B-4},
\end{align}
for some constant $C>0$ independent of $B$, $\mathcal{U}$, and $n$. Since $5\e+C/n$ can be made arbitrarily small, this establishes the lower semicontinuity for the case $p>1$.
 This completes the proof of  Theorem \ref{thm:general-result}(1).
\end{proof}

\begin{proof}[Proof of Theorem \ref{thm:general-result}(2)]
We first prove that the map $P(A, s)$ is  jointly continuous at $(A,s)$ when $A$ is locally constant and $s\not\in  \{0, 1,\ldots, d-1\}$.
%Again let  $r\in \{0,1,\ldots, p-1\}$ be the unique number such that $t_r < s\le t_{r+1}$.
Here we only do it in the case $1\leq r<p-1$ (the cases $r=0$ and $r=p-1$ can be dealt with in a similar way). Notice that  for $B\in\mathcal{U}$,  $x|\ell n\in\Delta_n^\ell$ and $t_r<s'\leq t_{r+1}$, instead of \eqref{eq:large-SV-B-2}-\eqref{eq:large-SV-B-4}, we can prove similarly
\begin{align*}
&\phi^{s'}(B(x,{ \ell }n))
\ge c^\ell \exp(n\ell [\Gamma_r+(s'-t_r)\lambda_{r+1}-2\e]),\\
&\exp(S_{\ell n}g(x))\phi^{s'}(B(x,{\ell }n)) \ge c^{2\ell}\exp\left( n\ell\left( P_g(A,s) +(s'-s)\lambda_{r+1}- h_\mu - 4\e\right) \right),\\
&P_{g}(B,s')\ge P_{g}(A,s) + (s'-s)\lambda_{r+1}- \left(5\e+C/n\right).
\end{align*}
This proves the lower semi-continuity (and hence the continuity) of the pressure map at the point $(A,s)$ if  $s\not\in \{0, 1,\ldots, d-1\}$ (since in such case $t_r<s< t_{r+1}$).

 To complete the proof of part (2), assume that  $A$ is a locally constant matrix cocycle taking values in $GL_d(\R)$ and $s\geq 0$. When the neighborhood $\mathcal{U}$ of $A$ is taken small enough, there exist two positive constant $C, D>0$ such that for any $B\in \mathcal{U}$, $x\in X$,
    $$
 D<\frac{1}{\|B(x)^{-1}\|}\leq \|B(x)\|<C.
 $$
Hence for $n\in \N$,
   $$
 D^n<\frac{1}{\|B(x,n)^{-1}\|}\leq \|B(x,n)\|<C^n.
 $$
 Therefore all the singular values of $B(x,n)$ are between $D^n$ and $C^n$, from which we can easily deduce that
\[
D^{(s'-s)n}\leq \frac{\phi^{s'}(B(x,n))}{\phi^{s}(B(x,n))}\leq C^{(s'-s)n}
\]
if $s'>s$; and
\[
C^{(s'-s)n}\leq \frac{\phi^{s'}(B(x,n))}{\phi^{s}(B(x,n))}\leq D^{(s'-s)n}
\]
if $s'\leq s$. This implies that
\begin{equation}
\label{eq-close}
 |P_{g}(B,s')-P_g(B,s)|\leq |s-s'|\cdot \max\{|\log C|, |\log D|\}.
 \end{equation}
Recall that we have shown in part (1) that $P_g(B,s')$ tends to  $P(A,s')$ if $B\to A$ and $s'>0$. Due to this fact and \eqref{eq-close}, we see that $P_g(B,s')$ tends to  $P(A,s)$ if $(B,s')\to (A,s)$.
 This finishes the proof of the theorem.
 \end{proof}

\subsection{ Proofs of Theorems \ref{thm:continuity-pressure} and \ref{thm:matrix-pressure-continuity}}

\begin{proof}[Proof of Theorem \ref{thm:continuity-pressure}]  Statements (1) and (3)  follow directly from Theorem \ref{thm:general-result}  by letting $X$ be a full shift, and $g\equiv 0$.

To prove (2), let $\bA\in \cA_{d, k}^C$ and $\epsilon>0$ (the case $\e=0$ can be handled easily). Then $P(\bA, s(\bA)+\epsilon)<0$. By (1), when $\bB$ is close enough to $\bA$, we have $P(\bB,  s(\bA)+\epsilon)<0$ and hence $s(\bB)\leq   s(\bA)+\epsilon$. Since $\epsilon$ can be taken arbitrary small, this proves $\limsup_{\bB\to \bA}s(\bB)\leq s(\bA)$. Next we prove
$\liminf_{\bB\to \bA}s(\bB)\geq s(\bA)$. For this purpose, we can assume that $s(\bA)>0$; otherwise we have nothing left to prove. Then  $P(\bA, s(\bA)-\epsilon)>0$ for $0<\epsilon<s(\bA)$. Applying (1) again, we see that when $\bB$ is close enough to $\bA$,  $P(\bB, s(\bA)-\epsilon)>0$ and thus $s(\bB)>s(\bA)-\epsilon$. This proves $\liminf_{\bB\to \bA}s(\bB)\geq s(\bA)$, and we are done.
\end{proof}

\begin{proof}[Proof of Theorem \ref{thm:matrix-pressure-continuity}]We follow essentially the same idea used in the proof of Theorem \ref{thm:general-result}. Again, upper semi-continuity follows from Lemma  \ref{lem:general-semicontinuity}, so we only need to prove the lower semi-continuity of $M(\cdot,\cdot)$.

Let $s\geq 0$ and $\bA\in \cA_{d,k}$.
Let $(X, T)$ be the full shift  over the alphabet $\{1,\ldots, k\}$ and $A: X\to \R^{d\times d}$ be the matrix cocycle generated by $\bA$.  Fix $\e>0$. By the variational principle (Theorem \ref{thm:sub-additive-VP}), there is $\mu\in\cE$ such that
\begin{align}
M(\bA,s) -\e &\le h_\mu + s\lim_{n\to\infty}\frac{1}{n}\int \log \|A(x,n)\|\, d\mu(x)\nonumber \\
&=: h_\mu  + s\lambda_1(\mu). \label{eq:variational-principle'}
\end{align}
Let $\lambda_i,d_i, t_i$ ($1\leq i\leq p$), $\eta, S$ and $\{\Sigma_n:n\in S\}$ be as in Theorem \ref{thm:sub-cocycle-cone-condition}.  Write $\Gamma_m = \sum_{i=1}^m d_i\lambda_i$ for $m=1,\ldots,  p$ and set $\Gamma_0=0$ for   convenience. Clearly $\lambda_1(\mu)=\lambda_1$. We may assume  $\lambda_{1}>-\infty$, for otherwise we would have $M(\bA,s)=-\infty$ and the lower semi-continuity of $M(\cdot, \cdot)$ at $(\bA,s)$ follows automatically.

As proved in  Theorem \ref{thm:sub-cocycle-cone-condition},
there is $n\in S$ and a set $\Delta_n\subset \Sigma_n$, such that \eqref{eq:large-size}, \eqref{eq:small-measure}, \eqref{eq:large-SV-1} and \eqref{eq:lower-bound-size} hold; in particular, there is an open neighborhood ${\mathcal U}$ of $A$ in $\cM(X,d)$ such that \eqref{eq:large-SV-B-1} and \eqref{eq:large-SV-B-1'} hold for   $B\in\mathcal{U}$ and $x|\ell n\in\Delta_n^\ell$, where
 $\Delta_n^\ell\subset \{1,\ldots, k\}^{\ell n}$ denotes the family of juxtapositions of $\ell$ words in $\Delta_n$. By Lemma \ref{SVF-inequality}, \eqref{eq:large-SV-B-1} and \eqref{eq:large-SV-B-1'}, we have for   $B\in\mathcal{U}$ and $x|\ell n\in\Delta_n^\ell$,
 $$
 \|B(x,\ell n)\|=\phi^{ 1}(B(x,\ell n))\geq \left(\phi^{ t_1} (B(x,\ell n))\right)^{\frac{s}{t_1}}\geq c^\ell \exp(\ell n(\lambda_1-\epsilon)),
 $$
and hence for $s'\geq 0$,
 $$
 \|B(x,\ell n)\|^{s'}\geq c^{\ell s'} \exp(s'\ell n(\lambda_1-\epsilon)).
 $$
In particular,  this holds for all locally constant cocycles $B\in \mathcal{U}$, which we identify with elements $\bB\in\cA_{d,k}$.  Given $x\in X^*$, let $\overline{x}\in X$ be any infinite word starting with $x$.   With this information, and also using \eqref{eq:lower-bound-size}, we can conclude:
 \begin{align*}
M(\bB,s') &\ge \limsup_{\ell\to\infty} \frac{1}{n\ell} \log\left(\sum_{x\in\Delta_n^\ell}\|B(\overline{x},\ell n)\|^{s'}\right)\nonumber \\
&\ge \limsup_{\ell\to\infty} \frac{1}{n\ell} \log\left((\eta/2)^\ell \exp(n\ell(h_\mu-\e)) \cdot c^{\ell s'} \exp\left( n\ell s'(\lambda_1-\e) \right)\right) \nonumber\\
&\ge M(\bA,s) +(s'-s)\lambda_1+(s'+1) \left(2\e+C/n\right)  %\label{eq:large-SV-B-4},
\end{align*}
for some constant $C>0$ independent of $\bB$, $\mathcal{U}$, $s'$ and  $n$. Since $2\e+C/n$ can be made arbitrarily small, this establishes the lower semicontinuity.
\end{proof}

\subsection{An alternative approach in the two-dimensional case}

A result of Bocker and Viana \cite{BockerViana10} on continuity of Lyapunov exponents for IID $2\times 2$ invertible matrix cocycles can be
used to give a short alternative proof of Theorems \ref{thm:continuity-pressure} and \ref{thm:matrix-pressure-continuity} in the case that $d = 2$ and $\bA\in \cG_{d,k}$.

To state the result of Bocker and Viana, let $k\geq 2$ be a positive integer and  $\Omega_k$ denote the collection of strictly positive probability vectors in $\R^k$. Let $X$ denote the full shift space over $k$ symbols. For ${\bf p}=(p_1,\ldots, p_k)\in \Omega_k$, let $\mu_{\bf p}$ denote the Bernoulli product measure $\prod_{n=1}^\infty (p_1,\ldots, p_k)$ on $X$. The main result of Bocker
and Viana in \cite{BockerViana10} can be formulated as follows. Recall that for $\bA\in\cG_{d,k}$ and $s\ge 0$ we write
\[
 E_\mu(\bA,s) = \lim_{n\to\infty}\frac1n \int \phi^s(A(n,x)) \, d\mu(x),
\]
where $A$ is the associated cocycle.

\begin{thm}
\label{thm-1} The map $(\bA, {\bf p})\to E_{\mu_{\bf p}}(\bA, 1)$ is continuous over $\cG_{d,k}\times \Omega_k$.
\end{thm}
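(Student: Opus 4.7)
The plan rests on the observation that in dimension $d=2$, the quantity $E_{\mu_{\mathbf{p}}}(\bA,1)$ is the top Lyapunov exponent $\lambda_1(\bA,\mathbf{p})$ (since $\phi^1(B)=\|B\|$), and the two Lyapunov exponents satisfy the continuous identity
\begin{equation*}
\lambda_1(\bA, \mathbf{p}) + \lambda_2(\bA, \mathbf{p}) \;=\; \sum_{i=1}^k p_i \log |\det A_i|.
\end{equation*}
Upper semi-continuity of $\lambda_1$ is automatic from sub-additivity (it is the infimum over $n$ of continuous functions), so I would focus on lower semi-continuity and split into two regimes depending on whether the limit point has a spectral gap.

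If $\lambda_1(\bA, \mathbf{p}) = \lambda_2(\bA, \mathbf{p})$, then $\lambda_1(\bA, \mathbf{p}) = \tfrac{1}{2}\sum_i p_i\log|\det A_i|$, and the universal inequality $\lambda_1 \ge \tfrac{1}{2}(\lambda_1+\lambda_2)$ applied along any sequence $(\bA^{(n)},\mathbf{p}^{(n)}) \to (\bA,\mathbf{p})$, combined with continuity of the right-hand side of the identity above, immediately yields $\liminf_n \lambda_1(\bA^{(n)},\mathbf{p}^{(n)}) \ge \lambda_1(\bA,\mathbf{p})$.

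If $\lambda_1(\bA,\mathbf{p}) > \lambda_2(\bA,\mathbf{p})$, I would invoke the Furstenberg representation: there is a $\mu_{\mathbf{p}}$-stationary probability measure $\nu$ on $\mathbb{P}^1$ with
\begin{equation*}
\lambda_1(\bA,\mathbf{p}) \;=\; \int \log\frac{\|Av\|}{\|v\|}\,d\mu_{\mathbf{p}}(A)\,d\nu(\bar v),
\end{equation*}
and analogously for each perturbation $(\bA^{(n)},\mathbf{p}^{(n)})$. Passing to a weak-$*$ limit $\nu_\infty$ of these stationary measures along a subsequence (possible since probabilities on $\mathbb{P}^1$ are compact), continuity of the convolution action on measures shows that $\nu_\infty$ is $\mu_{\mathbf{p}}$-stationary for $\bA$, and continuity and local boundedness of $\log(\|Av\|/\|v\|)$ on $\cG_{2,k}\times\mathbb{P}^1$ yields
\begin{equation*}
\liminf_{n\to\infty}\lambda_1(\bA^{(n)},\mathbf{p}^{(n)}) \;\ge\; \int \log\frac{\|Av\|}{\|v\|}\,d\mu_{\mathbf{p}}(A)\,d\nu_\infty(\bar v).
\end{equation*}

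The hard part will be showing that this right-hand side equals $\lambda_1(\bA,\mathbf{p})$ rather than some strictly smaller quantity, and this is the core technical input of Bocker--Viana. The obstruction is reducibility: if the matrices $A_i$ share a common invariant line $L\in\mathbb{P}^1$, the atom $\delta_L$ is $\mu_{\mathbf{p}}$-stationary and integrates to an exponent generally below $\lambda_1(\bA,\mathbf{p})$, so one must rule out that $\nu_n$ escapes to such ``parasitic'' atoms under perturbation. The resolution in \cite{BockerViana10} is a quantitative non-concentration (``invariance principle'') estimate which bounds the mass $\nu_n$ can place near any single point of $\mathbb{P}^1$ in terms of the spectral gap $\lambda_1(\bA^{(n)},\mathbf{p}^{(n)})-\lambda_2(\bA^{(n)},\mathbf{p}^{(n)})$, combined with a dichotomy: along any sequence one extracts a subsequence along which either the spectral gap stays bounded away from zero (so the $\nu_n$ remain uniformly non-atomic and the Furstenberg integral passes cleanly to the limit) or the gap collapses to zero (which forces the limit into the degenerate regime already handled via the determinant identity). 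Patching the two cases together completes the proof of lower semi-continuity, and hence the theorem.
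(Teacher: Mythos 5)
You should first note that the paper itself contains no proof of this statement: Theorem \ref{thm-1} is exactly the main result of Bocker and Viana, imported from \cite{BockerViana10} (for products of invertible $2\times 2$ matrices; the printed $\cG_{d,k}$ notwithstanding, it is stated and used in the section on the two-dimensional case, so your restriction to $d=2$ matches the intended scope). The authors use it as a black box to give an alternative proof of Theorems \ref{thm:continuity-pressure} and \ref{thm:matrix-pressure-continuity} when $d=2$, so there is no internal argument to compare with; your proposal is in effect an exposition of the cited proof. Your preliminary reductions are fine: $E_{\mu_{\mathbf{p}}}(\bA,1)$ is the top Lyapunov exponent $\lambda_1(\bA,\mathbf{p})$, upper semi-continuity follows from sub-additivity, the case $\lambda_1=\lambda_2$ follows from $\lambda_1+\lambda_2=\sum_i p_i\log|\det A_i|$, and in the gapped case the Furstenberg--Kifer formula together with weak-$*$ compactness of stationary measures gives $\liminf_n\lambda_1(\bA^{(n)},\mathbf{p}^{(n)})\ge \int \log\bigl(\|Av\|/\|v\|\bigr)\,d\mu_{\mathbf{p}}\,d\nu_\infty$ for some $\mu_{\mathbf{p}}$-stationary $\nu_\infty$.

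The genuine gap is that the entire content of the theorem --- showing this limiting integral cannot fall below $\lambda_1(\bA,\mathbf{p})$, i.e.\ that mass cannot escape to invariant directions carrying the smaller exponent --- is precisely the step you defer to \cite{BockerViana10}; as written, the proposal reduces the theorem to itself. Moreover, the dichotomy you propose for patching the cases is incorrect as stated. If the perturbed gaps $\lambda_1^{(n)}-\lambda_2^{(n)}$ collapse to zero while the limit point has a gap, then $\lambda_1^{(n)}\to\tfrac12(\lambda_1+\lambda_2)<\lambda_1$: this is exactly the discontinuity scenario that must be excluded, not a case ``already handled via the determinant identity'' (that identity only settles limit points with $\lambda_1=\lambda_2$). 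Conversely, a gap bounded away from zero along the sequence does not make the stationary measures $\nu_n$ ``uniformly non-atomic,'' nor does it let the Furstenberg integrals pass cleanly to the limit: each $\nu_n$ can be non-atomic and yet concentrate, as $n\to\infty$, near a line that is invariant for the limiting cocycle and has exponent $\lambda_2$, so the weak-$*$ limit can still be a parasitic atom. Ruling out both scenarios is where Bocker--Viana's quantitative non-concentration/martingale estimates and the invariance principle actually enter, and nothing in your outline substitutes for that work.
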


Without loss of generality and for brevity, in the following we only show that the above theorem can be used to prove the continuity of  the map $(\bA,s)\to P(\bA,s)$  over $\cG_{2,k}\times (1,2)$. The full statements of Theorems 1.2 and 1.3 (for $d = 2$ and $\bA\in \cG_{d,k}$) can be proved by using nearly identical arguments.

Due to a result of Falconer and Sloan \cite[Corollary 1.3]{FalconerSloan09}, we only need to prove that $(\bA, s)$ is a continuity point of $P(\cdot, \cdot)$ if $\bA=(A_1,\ldots, A_k) \in \cG_{2,k}$ is reducible (in the sense that the $A_i$ have a common eigenvector in $\R^2$).

Now fix a reducible cocycle $\bA=(A_1,\ldots, A_k) \in \cG_{2,k}$, and $s\in (1,2)$. Assume that $v\in \R^2$ is a common eigenvector of $A_i$. Pick $T\in GL_2(\R)$ so that
$T(1,0)^*=v$. Then $B_i:=T^{-1}A_iT$ are upper triangular matrices, say $\left(\begin{array}{cc}
a_i & c_i\\
0& b_i
\end{array}
\right)
$.  Denote $\bB=(B_1,\ldots, B_k)$. Let $f, g: X\to \R$ be locally constant functions, defined respectively by
$$
f(x)=\log a_{x_1}+(s-1)\log b_{x_1},\quad g(x)=\log b_{x_1}+(s-1)\log a_{x_1}.
$$
Following the same arguments as those in the proof of \cite[Theorem 1.7(i)]{FengKaenmaki11}, we can prove that  for any $\mu\in \cE$,

$$
E_\mu(\bB, s)=\max\{\mu(f), \mu(g)\}.
$$
Noticing that $E_\mu(\bA,s)=E_\mu(\bB, s)$,
by the variational principle (Theorem \ref{variational-principle}), we have
$$
P(\bA,s)=\max\left\{\sup_{\mu\in \cE} (h_\mu+\mu(f)),\quad \sup_{\mu\in \cE} (h_\mu+\mu(g))\right \}.
$$
Since $f$ and $g$ are locally constant, we see that that the set $\cE_0(\bA, s)$ of ergodic equilibrium measures consists of one or two Bernoulli product measures on $X$. Take $\mu_{\bf p}\in \cE_0(\bA, s)$. For any $\bA'\in \cG_{2,k}$ and $s'\in (1,2)$, using  the variational principle again we have
\begin{align}
P(\bA',s')&\geq h_{\mu_{\bf p}}+E_{\mu_{\bf p}}(\bA', s')\nonumber\\
&=h_{\mu_{\bf p}}+(2-s')E_{\mu_{\bf p}}(\bA', 1)+(s'-1)E_{\mu_{\bf p}}(\bA', 2). \label{e-1}
\end{align}
Notice that $E_{\mu_{\bf p}}(\bA', 2)=\sum_{i=1}^k p_i \log |\det(A_i')|$. Hence when $\bA'$ tends to $\bA$, we have
$$
E_{\mu_{\bf p}}(\bA', 2)\to E_{\mu_{\bf p}}(\bA, 2) \
\mbox{ and }E_{\mu_{\bf p}}(\bA', 1)\to E_{\mu_{\bf p}}(\bA, 1).
$$
where the second convergence follows from Theorem \ref{thm-1}.  Hence by \eqref{e-1}, we have
$$
\liminf_{(\bA',s')\to (\bA,s)}P(\bA',s')\geq h_{\mu_{\bf p}}+E_{\mu_{\bf p}}(\bA, s)=P(\bA,s).
$$
This proves the lower semi-continuity (and hence the continuity) of $P(\cdot,\cdot)$ at $(\bA,s)$.

\section{Further generalizations and applications}
\label{sec:generalizations}
\subsection{A further generalization of Theorems \ref{thm:matrix-pressure-continuity}-\ref{thm:general-result}} For $s_1\ge \cdots \ge s_d\geq 0$, we define the {\em  generalized singular value function} $\phi^{s_1,\ldots, s_d}:\R^{d\times d}\to [0,\infty)$ as
\begin{equation}
\label{eq: generalizedSVF}
\phi^{s_1,\ldots, s_d} (A) = \alpha_1(A)^{s_1}\cdots\alpha_d(A)^{s_d}=\left(\prod_{m=1}^{d-1}\|A^{\w m}\|^{s_m-s_{m+1}}\right)\|A^{\w d}\|^{s_d}.
\end{equation}
When $s\in [0,d]$, the singular value function $\phi^s(\cdot)$ coincides with the generalized singular value function $\phi^{s_1,\ldots, s_d}(\cdot)$, where
$$(s_1,\ldots,  s_d)=(\underbrace{1,\ldots, 1}_{m \text{ times}}, s-m, 0,\ldots, 0),$$ with $m=\lfloor s\rfloor$. The generalized singular value function also arises in connection with the $L^q$ spectrum of measures on self-affine sets, see \cite{Falconer99} and Section~\ref{subsec:further-applications} below.

From the second equality in \eqref{eq: generalizedSVF},  we see that the generalized singular value function is sub-multiplicative. Let $(X, T)$ be a subshift of finite type. For  $g\in C(X)$ and $A\in \cM(X,d)$,  let $P_g(A, (s_1,\ldots, s_d))$ denote the topological pressure of the sub-additive potential
 $\cF=\{\log f_n\}$ with
\begin{equation} \label{eq:generalized-potential}
f_n(x)=\exp(S_ng(x)) \phi^{s_1,\ldots, s_d}(A(x,n)).
\end{equation}

Denote $\Lambda_d:=\{(s_1,\ldots, s_d): s_1\geq \cdots\geq s_d\geq 0\}$.
As a generalization of Theorems \ref{thm:matrix-pressure-continuity}-\ref{thm:general-result}, we have
\begin{thm}
\label{thm:generalization}
 \begin{itemize}
\item[(1)]For fixed  $(s_1,\ldots, s_d)\in \Lambda_d$, any  locally constant cocycle $A$ is a continuity point of the pressure map $B\to P_g(B,(s_1,\ldots, s_d))$ on $\cM(X, d)$.
 \item[(2)]  For each locally constant cocycle $A\in \cM(X, d)$ taking values in $GL_d(\R)$ and for each $(s_1,\ldots, s_d)\in \Lambda_d$,  the pair $(A,(s_1,\ldots, s_d))$ is a continuity point of the pressure map $P_g(\cdot,\cdot)$ on $\cM(X, d)\times \Lambda_d$.
 \end{itemize}
\end{thm}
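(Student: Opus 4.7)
My plan is to adapt the argument of Theorem~\ref{thm:general-result} almost verbatim, the only new ingredient being an interpolation step that handles the exterior-power norms at non-Lyapunov indices. The structural observation that drives the adaptation is the factorization
\[
\phi^{s_1,\ldots,s_d}(B) = \prod_{m=1}^{d-1}\|B^{\w m}\|^{s_m - s_{m+1}}\cdot\|B^{\w d}\|^{s_d},
\]
which expresses $\phi^{s_1,\ldots,s_d}$ entirely in terms of the exterior-power norms $\|B^{\w m}\|$, $1\le m\le d$. The cocycles $B^{\w m}$ at the Lyapunov jumps $m=t_r$ are precisely those for which Theorem~\ref{thm:sub-cocycle-cone-condition} supplies the cone condition, while intermediate indices will be handled through an interpolation inequality.

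For part~(1), upper semi-continuity will be immediate from Lemma~\ref{lem:general-semicontinuity}. For lower semi-continuity at a locally constant $A$, I would first invoke the variational principle (Theorem~\ref{thm:sub-additive-VP}) to select an ergodic $\mu$ satisfying
\[
P_g(A,(s_1,\ldots,s_d)) - \e \le h_\mu + \mu(g) + E_\mu(A,(s_1,\ldots,s_d)),
\]
where, by a straightforward analogue of Lemma~\ref{lem:oseledets-gives-Lyapunov-for-SV} applied to the factorization above,
\[
E_\mu(A,(s_1,\ldots,s_d)) = \sum_{i=1}^p \lambda_i \sum_{j=t_{i-1}+1}^{t_i} s_j
\]
in terms of the Lyapunov data $\lambda_i, d_i, t_i$ of $(A,\mu)$ (with $t_0 = 0$). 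The degenerate case $\lambda_p=-\infty$ either forces $s_d=0$, in which case the formula is still meaningful, or $P_g=-\infty$, in which case there is nothing to prove; I would dispose of these cases separately. Next, Theorem~\ref{thm:sub-cocycle-cone-condition} yields $n\in S$ and a family $\Delta_n\subset X_n^*$ for which the cone condition holds for $A^{\w t_r}$, $1\le r\le p-1$, and the estimates \eqref{eq:large-size}--\eqref{eq:large-SV-2} hold for $x|n\in\Delta_n$.

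The new step is the interpolation at intermediate exterior powers. By robustness of the cone condition and Lemma~\ref{lem:cone-implies-almost-multiplicative}, on a suitable $L^\infty$-neighborhood $\cU$ of $A$ the bounds \eqref{eq:large-SV-B-1}--\eqref{eq:large-SV-B-1'} remain valid for all $B\in\cU$ and $x|\ell n\in\Delta_n^\ell$. For an intermediate index $t_r < m < t_{r+1}$ the cone condition is unavailable, but applying Lemma~\ref{SVF-inequality} with the integer endpoints $t_r$ and $t_{r+1}$ to $B(x,\ell n)$ gives
\[
\|B^{\w m}(x,\ell n)\| \ge \|B^{\w t_r}(x,\ell n)\|^{(t_{r+1}-m)/d_{r+1}}\,\|B^{\w t_{r+1}}(x,\ell n)\|^{(m-t_r)/d_{r+1}},
\]
which interpolates the available bounds into a lower bound of the shape
\[
\|B^{\w m}(x,\ell n)\| \ge c_m^\ell \exp\bigl(n\ell(\Gamma_r + (m - t_r)\lambda_{r+1} - 2\e)\bigr).
\]
Substituting these into the factorization of $\phi^{s_1,\ldots,s_d}$, weighted by $s_m - s_{m+1}$ and $s_d$, and then summing over $\ell$-fold concatenations of words in $\Delta_n$ in the usual way, would yield
\[
P_g(B,(s_1,\ldots,s_d)) \ge P_g(A,(s_1,\ldots,s_d)) - O(\e + 1/n)
\]
uniformly for $B\in\cU$, establishing the desired lower semi-continuity. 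For part~(2), when $A\in\cM(X,d)$ is locally constant with values in $GL_d(\R)$, I would pick a small neighborhood $\cU$ of $A$ on which uniform bounds $0<D^n\le \alpha_d(B(x,n)) \le \alpha_1(B(x,n)) \le C^n$ hold for $B\in\cU$. This forces $(s_1,\ldots,s_d)\mapsto \log\phi^{s_1,\ldots,s_d}(B(x,n))$ to be Lipschitz with constant $n\max\{|\log C|, |\log D|\}$ on $\Lambda_d$, and hence
\[
|P_g(B,(s'_1,\ldots,s'_d)) - P_g(B,(s_1,\ldots,s_d))| \le \max\{|\log C|, |\log D|\}\sum_{j=1}^d |s'_j - s_j|,
\]
which combined with the continuity in the $B$-variable from part~(1) gives joint continuity at $(A,(s_1,\ldots,s_d))$.

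I expect the main technical obstacle to be the interpolation step. At non-Lyapunov indices the cone condition is unavailable, so one must verify that Lemma~\ref{SVF-inequality} applied at the adjacent jumps $t_r,t_{r+1}$ truly produces a lower bound with the correct exponential rate $\Gamma_r + (m - t_r)\lambda_{r+1}$ (the rate predicted by Oseledets' theorem for $\|A^{\w m}\|$) rather than something strictly smaller. The convexity underlying Lemma~\ref{SVF-inequality} guarantees the sharpness of the interpolation, but the bookkeeping—tracking the multiplicative constants $c_m^\ell$ and controlling the accumulated error $O(\e)$ across all $d$ exterior powers simultaneously—has to be carried out carefully, particularly since the weights $s_m - s_{m+1}$ in the factorization can be of arbitrary (nonnegative) size.
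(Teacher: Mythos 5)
Your proposal is correct and is essentially the proof the paper intends: the paper omits it as ``nearly identical'' to Theorems \ref{thm:matrix-pressure-continuity}--\ref{thm:general-result}, and your adaptation---cone condition at the Oseledets indices $t_r$ via Theorem \ref{thm:sub-cocycle-cone-condition}, Lemma \ref{SVF-inequality} applied at the adjacent integers $t_r,t_{r+1}$ to control $\|B^{\w m}(x,\ell n)\|$ at intermediate $m$ (the interpolation is sharp because $\Gamma_r+(m-t_r)\lambda_{r+1}$ is exactly linear in $m$ within each block, as $\Gamma_{r+1}-\Gamma_r=d_{r+1}\lambda_{r+1}$), the nonnegativity of the weights $s_m-s_{m+1}$, $s_d$ to assemble the lower bound, and the uniform Lipschitz estimate in $(s_1,\ldots,s_d)$ for the $GL_d(\R)$ case---is precisely that argument. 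One small refinement: in the degenerate case $\lambda_p=-\infty$ with $P_g>-\infty$, finiteness of $E_\mu$ for the near-maximizing $\mu$ forces $s_j=0$ for \emph{every} $j>t_{p-1}$ (not merely $s_d=0$), which is exactly what makes the last Oseledets block drop out of the factorization.
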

The proof is nearly identical to that of Theorems \ref{thm:matrix-pressure-continuity}-\ref{thm:general-result} so is omitted.

\begin{remark}
It is also possible to generalize this theorem further as follows: in \eqref{eq:generalized-potential}, $\exp(S_n g(x))$ can be replaced by $g_n(x)$, where $\mathcal{G}=\{ \log g_n \}$ is an \emph{almost-additive} sequence of potentials, that is, $C^{-1} g_{n+m}(x) \le g_n(x) g_m(T^n x) \le C g_{n+m}(x)$ for all $x\in X$ and $n,m\in\N$, where $C>0$ is independent of $n,m,x$. Again, the proof is the same with routine changes.
\end{remark}

\subsection{An application to the continuity of equilibrium states}

Let $\mathcal{F}=\{ \log f_\ell\}$ be a sub-additive potential on a subshift of finite type $X$. For any $T$-invariant measure $\mu$ (not necessarily ergodic),
\[
P(T, \mathcal{F}) \ge h_\mu + \lim_{\ell\to\infty} \frac{1}{\ell} \int \log(f_\ell(x)) d\mu(x).
\]
For the proof, see e.g. \cite{CFH08}. Invariant measures for which there is equality above are called \emph{equilibrium measures} (or equilibrium states) for the potential $\mathcal{F}$. We recall from our discussion in Section~\ref{subsec:TF} that equilibrium measures always exist, but they do not need to be unique. It is natural to ask if the equilibrium measures vary continuously with the potential; we give a partial answer for the potentials considered in this article. Let $X$ be a subshift of finite type and fix $g\in C(X)$.  For a cocycle $A\in\cM(X,d)$ and $(s_1,\ldots, s_d)\in\Lambda_d$, let $\mathcal{E}_g(A,(s_1,\ldots,s_d))$ be the collection of equilibrium measures for the potential given by \eqref{eq:generalized-potential}. Furthermore, set
\begin{equation} \label{eq:def-generalized-energy}
E_\mu(A,(s_1,\ldots,s_d)) =  \lim_{n\to\infty} \frac{1}{n} \int \log \phi^{s_1,\ldots,s_d}(A(x,n))\, d\mu(x).
\end{equation}

\begin{proposition} \label{prop:continuity-eq-state}

\begin{itemize}
 \item[(1)] Let $(s_1,\ldots,s_d)\in\Lambda_d$, and $A_\ell, A\in\cM(X,d)$ be continuous cocycles such that $A_\ell\to A$ uniformly and $A$ is locally constant. If $\mu_\ell\to\mu$ weakly and $\mu_\ell\in \mathcal{E}_g(A_\ell,(s_1,\ldots,s_d))$, then $\mu\in\mathcal{E}_g(A,(s_1,\ldots,s_d))$. Moreover, $h_{\mu_\ell}\to h_\mu$ and $E_{\mu_\ell}(A_\ell,(s_1,\ldots,s_d))\to E_\mu(A,(s_1,\ldots,s_d))$.
 \item[(2)] Let $(s_1^{(\ell)},\ldots,s_d^{(\ell)})$ be a sequence in $\Lambda$ converging to $(s_1,\ldots,s_d)\in\Lambda$. Also, let $A_\ell, A\in\cM(X,d)$ be continuous cocycles such that $A_\ell\to A$ uniformly, and $A$ is locally constant and takes values in $GL_d(\R)$.  If $\mu_\ell\to\mu$ weakly and $\mu_\ell\in \mathcal{E}_g(A_\ell,(s_1^{(\ell)},\ldots,s_d^{(\ell)}))$, then $\mu\in\mathcal{E}_g(A,(s_1,\ldots,s_d))$. Moreover, $h_{\mu_\ell}\to h_\mu$ and $E_{\mu_\ell}(A_\ell,(s_1^{(\ell)},\ldots,s_d^{(\ell)}))\to E_\mu(A,(s_1,\ldots,s_d))$.
\end{itemize}
\end{proposition}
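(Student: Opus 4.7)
The plan is to combine three ingredients: continuity of the pressure (Theorem~\ref{thm:generalization}), upper semi-continuity of the metric entropy on a subshift of finite type, and an upper bound on the $\limsup$ of the energy term; a squeezing argument against the variational principle then delivers both the equilibrium property and the convergence of entropies and energies simultaneously.

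Set $\mathbf{s}^{(\ell)}=(s_1^{(\ell)},\ldots,s_d^{(\ell)})$, $\mathbf{s}=(s_1,\ldots,s_d)$, and let $\mathcal{F}_\ell,\mathcal{F}$ be the associated sub-additive potentials $\log f_n^{(\ell)}=S_n g+\log\phi^{\mathbf{s}^{(\ell)}}(A_\ell(\cdot,n))$, and similarly for $\mathcal{F}$. Since $\mu_\ell$ is an equilibrium measure,
\[
P(T,\mathcal{F}_\ell)=h_{\mu_\ell}+\mu_\ell(g)+E_{\mu_\ell}(A_\ell,\mathbf{s}^{(\ell)}).
\]
Theorem~\ref{thm:generalization} yields $P(T,\mathcal{F}_\ell)\to P(T,\mathcal{F})$; weak convergence gives $\mu_\ell(g)\to\mu(g)$; and the classical upper semi-continuity of $\mu\mapsto h_\mu$ on a subshift of finite type gives $\limsup_\ell h_{\mu_\ell}\le h_\mu$. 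The case $P(T,\mathcal{F})=-\infty$ is trivial (both sides of the variational principle collapse), so I assume $P(T,\mathcal{F})>-\infty$, which makes $h_{\mu_\ell}$ and $b_\ell:=\mu_\ell(g)+E_{\mu_\ell}(A_\ell,\mathbf{s}^{(\ell)})$ bounded sequences.

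The key step is to show
\[
\limsup_{\ell\to\infty} E_{\mu_\ell}(A_\ell,\mathbf{s}^{(\ell)})\le E_\mu(A,\mathbf{s}).
\]
Sub-multiplicativity of $\phi^{\mathbf{s}^{(\ell)}}$ implies that the sequence $n\mapsto \tfrac{1}{n}\int\log\phi^{\mathbf{s}^{(\ell)}}(A_\ell(x,n))\,d\mu_\ell(x)$ has infimum equal to its limit, so for every fixed $n\in\N$,
\[
E_{\mu_\ell}(A_\ell,\mathbf{s}^{(\ell)})\le \tfrac{1}{n}\int \log\phi^{\mathbf{s}^{(\ell)}}(A_\ell(x,n))\,d\mu_\ell(x).
\]
In case (2), where $A$ is locally constant with values in $GL_d(\R)$, the map $(B,\mathbf{t})\mapsto\log\phi^{\mathbf{t}}(B(x,n))$ is uniformly continuous on an $L^\infty$-neighbourhood of $A$ crossed with a neighbourhood of $\mathbf{s}$, and $x\mapsto\log\phi^{\mathbf{s}}(A(x,n))$ is continuous and bounded, so $A_\ell\to A$ uniformly and $\mu_\ell\to\mu$ weakly give $\lim_\ell \tfrac{1}{n}\int\log\phi^{\mathbf{s}^{(\ell)}}(A_\ell(\cdot,n))\,d\mu_\ell=\tfrac{1}{n}\int\log\phi^{\mathbf{s}}(A(\cdot,n))\,d\mu$, after which $\inf_n$ yields the claim. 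In case (1), where the integrand may equal $-\infty$ on some cylinders, I would truncate: define $u_{n,M}(B,x)=\max\bigl(\tfrac{1}{n}\log\phi^{\mathbf{s}}(B(x,n)),-M\bigr)$, which is jointly continuous in $(B,x)$ (the cutoff kills the logarithmic singularity at $\phi^{\mathbf{s}}=0$) and dominates the untruncated integrand. Uniform convergence of $A_\ell$ combined with weak convergence of $\mu_\ell$ gives $\lim_\ell \int u_{n,M}(A_\ell,\cdot)\,d\mu_\ell=\int u_{n,M}(A,\cdot)\,d\mu$; monotone convergence as $M\to\infty$ recovers $\tfrac{1}{n}\int\log\phi^{\mathbf{s}}(A(x,n))\,d\mu(x)\in[-\infty,\infty)$, and $\inf_n$ closes the step.

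The proof concludes by a squeeze. Write $a_\ell=h_{\mu_\ell}$ and $b_\ell=\mu_\ell(g)+E_{\mu_\ell}(A_\ell,\mathbf{s}^{(\ell)})$; the variational principle applied to $\mathcal{F}$ forces $h_\mu+\mu(g)+E_\mu(A,\mathbf{s})\le P(T,\mathcal{F})$, whence
\[
P(T,\mathcal{F})=\lim_\ell(a_\ell+b_\ell)\le \limsup_\ell a_\ell+\limsup_\ell b_\ell\le h_\mu+\mu(g)+E_\mu(A,\mathbf{s})\le P(T,\mathcal{F}),
\]
so all inequalities are equalities. In particular $\mu$ is an equilibrium measure, and $\limsup_\ell a_\ell=h_\mu$, $\limsup_\ell b_\ell=\mu(g)+E_\mu(A,\mathbf{s})$. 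Boundedness of $a_\ell,b_\ell$ together with $a_\ell+b_\ell\to P(T,\mathcal{F})$ then forces every subsequential limit of $a_\ell$ (resp.\ $b_\ell$) to coincide with $h_\mu$ (resp.\ $\mu(g)+E_\mu(A,\mathbf{s})$), yielding $h_{\mu_\ell}\to h_\mu$ and $E_{\mu_\ell}(A_\ell,\mathbf{s}^{(\ell)})\to E_\mu(A,\mathbf{s})$. I expect the main obstacle to be the truncation-plus-monotone-convergence passage in case (1), required precisely because $\log\phi^{\mathbf{s}}$ may be $-\infty$ on cylinders where $A$ is degenerate; the invertibility hypothesis in case (2) circumvents this entirely.
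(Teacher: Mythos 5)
Your argument is correct and is essentially the paper's own proof: the equilibrium identity for each $\mu_\ell$, continuity of the pressure from Theorem~\ref{thm:generalization}, upper semi-continuity of entropy, upper semi-continuity of the energy via the infimum-over-$n$ characterization, and a squeeze against the variational inequality, which yields the equilibrium property and the separate convergence of entropies and energies exactly as in the paper. Your truncation device in case (1) is simply a more careful justification of the energy upper semi-continuity that the paper asserts directly as ``an infimum of continuous functions,'' and your treatment of case (2) via invertibility matches the paper's appeal to the second part of Theorem~\ref{thm:generalization}.
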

\begin{proof}
 By definition of equilibrium measure,
 \[
  P_g(A, (s_1,\ldots, s_d)) = h_{\mu} + \int g \,d\mu  +E_\mu(A,(s_1,\ldots,s_d)).
 \]
 Since the limit in \eqref{eq:def-generalized-energy} is in fact an infimum by sub-additivity, the function $(B,\nu)\to E_\nu(B,(s_1,\ldots,s_d))$ is upper semicontinuous, as it is an infimum of continuous functions (here we are considering the uniform topology on $C(X,\R^{d\times d})$ and the weak topology on the probability measures on $X$). Also, the entropy map $\nu\to h_\nu$ is upper-semicontinuous on any subshift (see e.g. \cite[Theorem 8.2]{Wal82}). Therefore, if $\mu_\ell\to\mu$ weakly, we conclude from these observations and the first part of Theorem \ref{thm:generalization} that
 \begin{align*}
 P_g(A, (s_1,\ldots, s_d)) &=  \lim_{\ell\to\infty} P_g(A_\ell, (s_1,\ldots, s_d)) \\
 &= \lim_{\ell\to\infty} h_{\mu_\ell} + \int g \,d\mu_\ell  +E_{\mu_\ell}(A_\ell,(s_1,\ldots,s_d))\\
 &\le h_\mu + \int g\,d\mu+ E_{\mu}(A,(s_1,\ldots,s_d)).
 \end{align*}
 This shows that $\mu\in \mathcal{E}_g(A,(s_1,\ldots,s_d))$ and there is equality throughout, giving the first claim. The second follows in the same way, using the second part of Theorem \ref{thm:generalization}.
\end{proof}

\begin{remark}
One could ask if the entire set of equilibrium measures varies continuously with the cocycle. However, there are simple counterexamples. For example, let $X$ be the full shift on two symbols, let
\[
A(0) = \left(
        \begin{array}{cc}
          2 & 0 \\
          0 & 1 \\
        \end{array}
      \right), \quad
A_\e(1) = \left(
        \begin{array}{cc}
          1+\e & 0 \\
          0 & 2 \\
        \end{array}
      \right),
\]
and let $A_\e$ be the locally constant cocycle taking values $A(0), A_\e(1)$. Then it follows either from a direct analysis (that we skip) or from \cite[Theorem 1.7]{FengKaenmaki11} that $\mathcal{E}_0(A_\e,(1,0,\ldots,0))$ is a singleton if $\e>0$, but $\mathcal{E}_0(A_0,(1,0,\ldots,0))$ contains two ergodic measures (and therefore also the segment joining them).
\end{remark}

\subsection{Continuity of the Lyapunov spectrum}

Let $X$ be a subshift of finite type over the alphabet $\{1,\ldots, k\}$. Let $\bA=(A_1,\ldots, A_k)\in \cG_{d, k}$. For $\alpha\in \R$, define
\[
\Delta_\bA(\alpha)=\left\{x\in X:\; \lim_{n\to \infty}\frac{1}{n}\log \|A(x,n)\|=\alpha \right\}.
\]
The map $\alpha\to h_{{\rm top}}(\Delta_\bA(\alpha))$, where $h_{{\rm top}}$ denotes the Bowen topological entropy for non-compact sets \cite{Bowen73}, is called the \emph{(upper) Lyapunov spectrum} of $\bA$, and a natural question is how it varies with $\bA$. The Lyapunov spectrum is closely related to the pressure function $M(\bA,q)$. Indeed, it follows from a more general result of Feng and Huang \cite[Theorem 1.3]{FengHuang10} that, for fixed $\bA\in\cG_{d,k}$, if $q>0$ and $\alpha$ is either the left or right derivative of $M(\bA,\cdot)$ at $q$ (which exist since $M(\bA,\cdot)$ is convex), then
\begin{equation} \label{eq:MF-Lyapunov}
h_{{\rm top}}(\Delta_\bA(\alpha)) = M(\bA,q)-\alpha q.
\end{equation}
From this we can deduce the following continuity result for the Lyapunov spectrum:
\begin{proposition} \label{prop:continuity-Lyap-spec}
Suppose $\bA_\ell,\bA\in\cG_{d,k}$ with $\bA_\ell\to \bA$ and $q_\ell,q >0$ such that $q_\ell\to q$. Moreover, assume that $P(\bA,\cdot)$ is differentiable at $q$, and let $\alpha$ be the derivative.

If $\alpha_\ell$ equals either the left or right derivative of $P(\bA_\ell,\cdot)$ at $q_\ell$, then
\[
\lim_{\ell\to\infty} h_{{\rm top}}(\Delta_{\bA_\ell}(\alpha_\ell)) =  h_{{\rm top}}(\Delta_{\bA}(\alpha)).
\]
\end{proposition}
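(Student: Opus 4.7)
My plan is to combine the identity \eqref{eq:MF-Lyapunov} with the joint continuity of $M(\cdot,\cdot)$ from Theorem \ref{thm:matrix-pressure-continuity} and a standard convex-analytic fact about convergence of subgradients. Applying \eqref{eq:MF-Lyapunov} to both sides of the proposed limit gives
\begin{equation*}
h_{\rm top}(\Delta_{\bA_\ell}(\alpha_\ell)) = M(\bA_\ell, q_\ell) - \alpha_\ell q_\ell, \qquad h_{\rm top}(\Delta_{\bA}(\alpha)) = M(\bA, q) - \alpha q.
\end{equation*}
Since $q_\ell \to q$ and $M(\bA_\ell, q_\ell) \to M(\bA, q)$ by Theorem \ref{thm:matrix-pressure-continuity}, the problem reduces to proving $\alpha_\ell \to \alpha$.

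For the convergence of the $\alpha_\ell$, the crucial structural fact is that for every $\bB \in \cG_{d,k}$ the function $q \mapsto M(\bB, q)$ is convex on $[0,\infty)$; this follows from H\"older's inequality applied to the finite sums $\sum_{|\i|=n}\|B_\i\|^q$ that define $M$. In particular each $\alpha_\ell$, being a one-sided derivative of $M(\bA_\ell,\cdot)$ at $q_\ell$, is an element of the subdifferential $\partial M(\bA_\ell,\cdot)(q_\ell)$. I would then invoke the following classical fact: if $f_\ell$ are convex functions on an open interval $I$ converging pointwise to a convex function $f$, $x_\ell \to x \in I$, $f$ is differentiable at $x$, and $s_\ell \in \partial f_\ell(x_\ell)$, then $s_\ell \to f'(x)$. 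Applied with $f_\ell = M(\bA_\ell,\cdot)$, $f=M(\bA,\cdot)$, $x_\ell = q_\ell$, $x=q$, and $s_\ell = \alpha_\ell$, this gives $\alpha_\ell \to \alpha$, and combining with $M(\bA_\ell, q_\ell) \to M(\bA, q)$ finishes the argument.

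The main obstacle, though routine, is the convex-analytic step. Its proof goes as follows: pointwise convergence of convex functions to a finite limit on an open interval is automatically uniform on compact sub-intervals, whence the subgradients $s_\ell$ remain uniformly bounded in a neighbourhood of $x$. Any limit point $s^\ast$ of $(s_\ell)$ then satisfies $f(y) \geq f(x) + s^\ast(y-x)$ for every $y \in I$ by passing to the limit in the subgradient inequality $f_\ell(y) \geq f_\ell(x_\ell) + s_\ell(y - x_\ell)$, and differentiability of $f$ at $x$ forces $s^\ast = f'(x)$. Here the hypothesis $q > 0$ ensures we are in the interior of the domain where this convex analysis applies, and no new information about matrix cocycles is needed beyond what is already encoded in Theorem \ref{thm:matrix-pressure-continuity}.
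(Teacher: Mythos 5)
Your proposal is correct, but it reaches the key step $\alpha_\ell\to\alpha$ by a genuinely different route than the paper. Both arguments reduce the proposition, via \eqref{eq:MF-Lyapunov} and the joint continuity of $M$ from Theorem \ref{thm:matrix-pressure-continuity}, to showing $\alpha_\ell\to\alpha$. You get this by pure convex analysis: $M(\bB,\cdot)$ is convex in $q$ (H\"older applied to the defining sums), each $\alpha_\ell$ is a subgradient of $M(\bA_\ell,\cdot)$ at $q_\ell$, and pointwise convergence of finite convex functions on an open interval (supplied again by Theorem \ref{thm:matrix-pressure-continuity}, with finiteness guaranteed since the matrices lie in $\cG_{d,k}$) forces the subgradients to converge to the derivative of $M(\bA,\cdot)$ at the point $q$ of differentiability; your sketch of this classical fact (local uniform convergence, uniform boundedness of the subgradients, passage to the limit in the subgradient inequality) is complete and the hypothesis $q>0$ indeed keeps everything in the interior. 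The paper instead argues dynamically: by \cite[Theorem 3.3]{FengHuang10}, each $\alpha_\ell$ is the Lyapunov exponent $\lim_n\frac1n\int\log\|A_\ell(x,n)\|\,d\mu_\ell$ of an equilibrium measure $\mu_\ell$ for the potential $\{q_\ell\log\|A_\ell(x,n)\|\}_n$, and Proposition \ref{prop:continuity-eq-state}(2) shows that any weak accumulation point of the $\mu_\ell$ is an equilibrium measure for the limiting potential, whose Lyapunov exponent must equal $\alpha$ by the differentiability of $M(\bA,\cdot)$ at $q$. Your approach is more elementary and self-contained, bypassing Proposition \ref{prop:continuity-eq-state} and the equilibrium-measure characterization of the derivative, and using \cite{FengHuang10} only through \eqref{eq:MF-Lyapunov}; the paper's approach additionally exhibits a limiting equilibrium measure realizing $\alpha$ (illustrating the use of Proposition \ref{prop:continuity-eq-state}) and is the template reused for the subsequent proposition on the joint spectrum of all Lyapunov exponents, where one would otherwise need a multivariate version of your subgradient argument. (One cosmetic point, common to the statement and both proofs: the derivative in the hypothesis should be read as that of $M(\bA,\cdot)$, not $P(\bA,\cdot)$.)
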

\begin{proof}
By \eqref{eq:MF-Lyapunov}, we only need to prove that
\[
\lim_{\ell\to\infty} M(\bA_\ell,q_\ell)-\alpha_\ell q_\ell = M(\bA,q)-\alpha q.
\]
In light of Theorem \ref{thm:matrix-pressure-continuity}, all we need to show is that $\alpha_\ell\to\alpha$. By \cite[Theorem 3.3]{FengHuang10}, for any $\ell$ there exists an equilibrium measure $\mu_\ell$ for the potential $\mathcal{F}_\ell=\{ q_\ell\log\|A_\ell(x,n)\|\}_n$ such that
\[
\alpha_\ell = \lim_{n\to\infty} \frac1n \int  \log\| A_\ell(x,n)\|\, d\mu_\ell.
\]
Given any sub-sequence $\ell_j$, let $\mu$ be an accumulation point of $\mu_{\ell_j}$. Applying the second part of Proposition \ref{prop:continuity-eq-state} (with $g\equiv 0$, $(s_1,\ldots,s_d)=(q,0,\ldots,0)$ and $(s_1^{(\ell)}, \ldots, s_d^{(\ell)})=(q_\ell, 0,\ldots, 0)$), we deduce that $\mu$ is an equilibrium measure (for $\mathcal{F}=\{ q\log\|A(x,n)\|\}_n$), and moreover $(\alpha_{\ell_j})$ accumulates to
\[
\lim_{n\to\infty} \frac1n \int \log\| A(x,n)\|\, d\mu.
\]
However, this expression equals $\alpha$ for any equilibrium measure $\mu$, thanks to the differentiability of $M(\bA,\cdot)$ at $q$ (see \cite[Theorem 3.3]{FengHuang10}). This concludes the proof.
\end{proof}

\begin{remark}
The function $M(\bA,\cdot)$ is differentiable whenever there is just one equilibrium measure, and this is known to hold under several conditions, such as irreducibility or strict positivity, see \cite{FengLau02, FengKaenmaki11}.
\end{remark}

Using \cite[Theorem 4.8]{FengHuang10}, which is a higher dimensional version of \eqref{eq:MF-Lyapunov}, it is possible to obtain the following generalization of Proposition \ref{prop:continuity-Lyap-spec} to the joint spectrum of all Lyapunov exponents. Given $\bA\in\cG_{d,k}$ and $\mathbf{a}=(a_1,\ldots,a_d)\in\R^d$, let
\[
\Delta_{\bA}(\mathbf{a}) = \left\{ x\in X: \lim_{n\to\infty} \frac1n \log(\alpha_i(A(x,n)) = a_i-a_{i-1} \text{ for } i=1,\ldots, d \right\},
\]
where we define $a_0=0$.
\begin{proposition}
 Let $\bA_\ell$, $\bA\in \cG_{d,k}$ with $\lim_{\ell\to \infty}\bA_\ell=\bA$.  Assume that
 ${\mathbf a}:=\nabla P(\bA,\mathbf{t})$ exists at  some ${\mathbf t} \in \R^{d}_+$. Then for any ${\bf t}_\ell\to {\bf t}$,
 $$
 \lim_{\ell\to \infty}h_{\rm top}\left( \Delta_{\bA_\ell}({\bf a}_\ell)\right)=h_{\rm top}( \Delta_{\bA}(\bf a)),
 $$
 where ${\bf a}_\ell$ is  any extreme point in the set of the subdifferential of $M(\bA^{(\ell)},\cdot)$ at ${\bf t}_\ell$.
\end{proposition}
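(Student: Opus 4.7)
The strategy is a higher-dimensional adaptation of the argument used for Proposition \ref{prop:continuity-Lyap-spec}, with the one-parameter pressure $M(\bA,q)$ replaced by the multi-parameter pressure $P(\bA,\mathbf{t})$ associated with the generalized singular value function $\phi^{t_1,\ldots,t_d}$ (Theorem \ref{thm:generalization}). First, by the multifractal formalism established in \cite[Theorem 4.8]{FengHuang10}, whenever $\mathbf{a}'$ is an extreme point of the subdifferential of $P(\bB,\cdot)$ at $\mathbf{t}'$, the level set satisfies the conditional variational principle
\[
h_{\rm top}\!\left(\Delta_\bB(\mathbf{a}')\right) = P(\bB,\mathbf{t}') - \mathbf{a}'\cdot \mathbf{t}'.
\]
Applied to both $(\bA_\ell,\mathbf{t}_\ell,\mathbf{a}_\ell)$ and $(\bA,\mathbf{t},\mathbf{a})$, this reduces the claim to showing
\[
P(\bA_\ell,\mathbf{t}_\ell) - \mathbf{a}_\ell\cdot \mathbf{t}_\ell \longrightarrow P(\bA,\mathbf{t}) - \mathbf{a}\cdot \mathbf{t}.
\]

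The first term converges by the continuity statement in Theorem \ref{thm:generalization}(2), since the limit cocycle $\bA$ is locally constant and takes values in $GL_d(\R)$. Since $\mathbf{t}_\ell\to\mathbf{t}$, the remaining task is to prove $\mathbf{a}_\ell\to \mathbf{a}$. For this, the extreme point characterization from \cite[Theorem 4.8]{FengHuang10} produces, for each $\ell$, an ergodic equilibrium measure $\mu_\ell\in \mathcal{E}_0(\bA_\ell,\mathbf{t}_\ell)$ whose Lyapunov vector realizes $\mathbf{a}_\ell$, i.e.\
\[
(\mathbf{a}_\ell)_i = \lim_{n\to\infty}\frac{1}{n}\int \log \alpha_i(A_\ell(x,n))\, d\mu_\ell(x),\qquad i=1,\ldots,d.
\]
By weak-* compactness, any subsequence of $\mu_\ell$ has a further subsequence converging to some $\mu$. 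Proposition \ref{prop:continuity-eq-state}(2) then guarantees that $\mu$ is an equilibrium measure for the generalized singular value potential of $\bA$ at $\mathbf{t}$, and moreover the relevant integrals converge, so each subsequential limit of $\mathbf{a}_\ell$ equals the corresponding Lyapunov vector of $\mu$.

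Finally, the differentiability hypothesis that $\mathbf{a}=\nabla P(\bA,\mathbf{t})$ exists forces the subdifferential of $P(\bA,\cdot)$ at $\mathbf{t}$ to be the singleton $\{\mathbf{a}\}$, and hence every equilibrium measure at $(\bA,\mathbf{t})$ has the same Lyapunov vector $\mathbf{a}$ (again by \cite[Theorem 4.8]{FengHuang10}). Combining this with the previous step shows $\mathbf{a}_\ell\to \mathbf{a}$, completing the proof. The main obstacle is the middle step: justifying the representation of an extreme point $\mathbf{a}_\ell$ as a Lyapunov vector of an actual ergodic equilibrium measure (as opposed to a convex combination), and then invoking the upper-semicontinuity of entropy and the infimum structure of the generalized singular value pressure to pass Proposition \ref{prop:continuity-eq-state}(2) through to the multi-dimensional setting. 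Once this is in place, the rest is a compactness-and-uniqueness argument powered by the differentiability assumption at $\mathbf{t}$.
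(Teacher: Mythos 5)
Your argument is exactly the one the paper intends: it omits the proof as ``analogous to Proposition \ref{prop:continuity-Lyap-spec}'', and you carry out precisely that analogy, replacing \eqref{eq:MF-Lyapunov} by the conditional variational principle of \cite[Theorem 4.8]{FengHuang10}, the continuity of $M(\cdot,\cdot)$ by Theorem \ref{thm:generalization}(2), and the single-exponent equilibrium-measure compactness step by Proposition \ref{prop:continuity-eq-state}(2), with the differentiability hypothesis at $\mathbf{t}$ forcing uniqueness of the limiting Lyapunov vector. This matches the paper's intended proof, so no further comment is needed.
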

The proof is analogous to that of Proposition \ref{prop:continuity-Lyap-spec} and is omitted.

\subsection{Further applications and remarks}
\label{subsec:further-applications}

Theorem \ref{Falconer-thm} has been generalized in many different directions. Several of these generalizations involve a notion of pressure which fits into the framework of Section~\ref{subsec:generalpressure}; we make a short summary.

 In \cite[Theorem 5.3]{Falconer94},  Falconer showed that the dimension of  a mixing repeller of a non-conformal $C^2$ mapping $g$, under a certain distortion condition, is bounded above by the zero point of the pressure function $P(A_g,s)$, where $A_g$ is a matrix cocycle (defined on a subshift of finite type) generated by the derivative of $g$. He also showed that the box counting dimension of the mixing repeller equals the zero of this pressure under certain additional assumptions. As proved by Zhang \cite{Zhang97},  the upper bound remains  valid for  any $C^1$ mapping without additional assumptions.  Theorem \ref{thm:general-result} shows that functions $g$ which are piecewise affine on the attractor are continuity points of $g\to P(A_g,s)$. We do not know what happens for general $C^1$ maps.

 Later, K{\"a}enm{\"a}ki and Vilppolainen obtained a dimension formula  for  typical sub-self-affine sets \cite{KaenmakiVilppolainen10} via the zero point of $P(A, s)$, where $A$ is a locally constant matrix cocycle defined on a subshift space. When this subshift is of finite type our results are again applicable.

 Similar dimensional results (but in a random version) have been obtained for  random subsets of typical self-affine sets \cite{FalconerMiao10}. Here again there is a natural pressure which fits into the framework of Theorem \ref{thm:general-result}, with $g$ a locally constant function related to the random probabilities in the model. More recently, a dimension formula was obtained for random  affine code tree fractals \cite{JJKKSS12}.

 In \cite{Falconer99},  Falconer also introduced a family of pressure functions $P_q(\mathbf{A},s)$ with $q\in\R$ and showed their relation to the $L^q$ spectrum of self-affine measures. In particular, for $1<q<2$, Falconer showed that the $L^q$ dimension of typical self-affine measures equals the zero of this pressure. Later on \cite{Falconer10}, Falconer proved that this formula remains valid  for almost self-affine measures (which are a kind of random perturbation of self-affine measures) on the full range $q>1$.  For $q\in (0,1)$, the pressure function $P_q(\mathbf{A},s)$ fits into the framework of Theorem \ref{thm:generalization}, and we deduce joint continuity in $(\mathbf{A},s,q)$. Unfortunately we do not know if continuity of the pressure holds for $q>1$ which is the more interesting range in light of Falconer's results.

\bigskip

\textbf{Acknowledgement}. We thank the referee for helpful suggestions and remarks.

%\bibliographystyle{plain}
%\bibliography{continuity}

\begin{thebibliography}{10}

\bibitem{AvilaBochi02}
Artur Avila and Jairo Bochi.
\newblock A formula with some applications to the theory of {L}yapunov
  exponents.
\newblock {\em Israel J. Math.}, 131:125--137, 2002.

\bibitem{Barreira10}
Luis Barreira.
\newblock Almost additive thermodynamic formalism: some recent developments.
\newblock {\em Rev. Math. Phys.}, 22(10):1147--1179, 2010.

\bibitem{Barreira96}
Luis~M. Barreira.
\newblock A non-additive thermodynamic formalism and applications to dimension
  theory of hyperbolic dynamical systems.
\newblock {\em Ergodic Theory Dynam. Systems}, 16(5):871--927, 1996.

\bibitem{BockerViana10}
Carlos Bocker-Neto and Marcelo Viana.
\newblock Continuity of lyapunov exponents for random 2d matrices.
\newblock {\em Preprint}, 2010.
\newblock Available at \texttt{http://arxiv.org/abs/1012.0872}.

\bibitem{BougerolLacroix85}
Philippe Bougerol and Jean Lacroix.
\newblock {\em Products of random matrices with applications to {S}chr\"odinger
  operators}, volume~8 of {\em Progress in Probability and Statistics}.
\newblock Birkh\"auser Boston Inc., Boston, MA, 1985.

\bibitem{Bowen73}
Rufus Bowen.
\newblock Topological entropy for noncompact sets.
\newblock {\em Trans. Amer. Math. Soc.}, 184:125--136, 1973.

\bibitem{Bowen75}
Rufus Bowen.
\newblock {\em Equilibrium states and the ergodic theory of {A}nosov
  diffeomorphisms}.
\newblock Lecture Notes in Mathematics, Vol. 470. Springer-Verlag, Berlin,
  1975.

\bibitem{Bowen79}
Rufus Bowen.
\newblock Hausdorff dimension of quasicircles.
\newblock {\em Inst. Hautes \'Etudes Sci. Publ. Math.}, (50):11--25, 1979.

\bibitem{CFH08}
Yong-Luo Cao, De-Jun Feng, and Wen Huang.
\newblock The thermodynamic formalism for sub-additive potentials.
\newblock {\em Discrete Contin. Dyn. Syst.}, 20(3):639--657, 2008.

\bibitem{Edg92}
Gerald~A. Edgar.
\newblock Fractal dimension of self-similar sets : some examples.
\newblock {\em Supplemento ai Rendiconti del Circolo Matematico di Palermo,
  Serie II}, 28:341--358, 1992.

\bibitem{Falconer88}
Kenneth~J. Falconer.
\newblock The {H}ausdorff dimension of self-affine fractals.
\newblock {\em Math. Proc. Cambridge Philos. Soc.}, 103(2):339--350, 1988.

\bibitem{Falconer92}
Kenneth~J. Falconer.
\newblock The dimension of self-affine fractals. {II}.
\newblock {\em Math. Proc. Cambridge Philos. Soc.}, 111(1):169--179, 1992.

\bibitem{Falconer03}
Kenneth J. Falconer.
\newblock {\em Fractal geometry}.
\newblock John Wiley \& Sons Inc., Hoboken, NJ, second edition, 2003.
\newblock Mathematical foundations and applications.

\bibitem{Falconer88b}
Kenneth~J. Falconer.
\newblock A subadditive thermodynamic formalism for mixing repellers.
\newblock {\em J. Phys. A}, 21(14):L737--L742, 1988.

\bibitem{Falconer94}
Kenneth~J. Falconer.
\newblock Bounded distortion and dimension for nonconformal repellers.
\newblock {\em Math. Proc. Cambridge Philos. Soc.}, 115(2):315--334, 1994.

\bibitem{Falconer99}
Kenneth~J. Falconer.
\newblock Generalized dimensions of measures on self-affine sets.
\newblock {\em Nonlinearity}, 12(4):877--891, 1999.

\bibitem{Falconer10}
Kenneth~J. Falconer.
\newblock Generalized dimensions of measures on almost self-affine sets.
\newblock {\em Nonlinearity}, 23(5):1047--1069, 2010.

\bibitem{FalconerMiao10}
Kenneth~J. Falconer and Jun Miao.
\newblock Random subsets of self-affine fractals.
\newblock {\em Mathematika}, 56(1):61--76, 2010.

\bibitem{FalconerSloan09}
Kenneth~J. Falconer and Arron Sloan.
\newblock Continuity of subadditive pressure for self-affine sets.
\newblock {\em Real Anal. Exchange}, 34(2):413--427, 2009.

\bibitem{Feng11}
De-Jun Feng.
\newblock Equilibrium states for factor maps between subshifts.
\newblock {\em Adv. Math.}, 226(3):2470--2502, 2011.

\bibitem{FengHuang10}
De-Jun Feng and Wen Huang.
\newblock Lyapunov spectrum of asymptotically sub-additive potentials.
\newblock {\em Comm. Math. Phys.}, 297(1):1--43, 2010.

\bibitem{FengKaenmaki11}
De-Jun Feng and Antti K{\"a}enm{\"a}ki.
\newblock Equilibrium states of the pressure function for products of matrices.
\newblock {\em Discrete Contin. Dyn. Syst.}, 30(3):699--708, 2011.

\bibitem{FengLau02}
De-Jun Feng and Ka-Sing Lau.
\newblock The pressure function for products of non-negative matrices.
\newblock {\em Math. Res. Lett.}, 9(2-3):363--378, 2002.

\bibitem{FLQ10}
Gary Froyland, Simon Lloyd, and Anthony Quas.
\newblock Coherent structures and isolated spectrum for {P}erron-{F}robenius
  cocycles.
\newblock {\em Ergodic Theory Dynam. Systems}, 30(3):729--756, 2010.

\bibitem{HMST11}
Kevin~G. Hare, Ian~D. Morris, Nikita Sidorov, and Jacques Theys.
\newblock An explicit counterexample to the {L}agarias-{W}ang finiteness
  conjecture.
\newblock {\em Adv. Math.}, 226(6):4667--4701, 2011.

\bibitem{HueterLalley95}
Irene Hueter and Steven~P. Lalley.
\newblock Falconer's formula for the {H}ausdorff dimension of a self-affine set
  in {${\bf R}\sp 2$}.
\newblock {\em Ergodic Theory Dynam. Systems}, 15(1):77--97, 1995.

\bibitem{JJKKSS12}
Esa J\"{a}rvenp\"{a}\"{a}, Maarit J\"{a}rvenp\"{a}\"{a}, Antti
  K\"{a}enm\"{a}ki, Henna Koivusalo, \"{O}rjan Stenflo, and Ville Suomala.
\newblock Dimensions of random affine code tree fractals.
\newblock {\em  Ergodic Theory Dynam. System}, to appear.
\newblock Available at {\tt http://arxiv.org/abs/1202.0140}.

\bibitem{Kaenmaki04}
Antti K{\"a}enm{\"a}ki.
\newblock On natural invariant measures on generalised iterated function
  systems.
\newblock {\em Ann. Acad. Sci. Fenn. Math.}, 29(2):419--458, 2004.

\bibitem{KaenmakiShmerkin09}
Antti K{\"a}enm{\"a}ki and Pablo Shmerkin.
\newblock Overlapping self-affine sets of {K}akeya type.
\newblock {\em Ergodic Theory Dynam. Systems}, 29(3):941--965, 2009.

\bibitem{KaenmakiVilppolainen10}
Antti K{\"a}enm{\"a}ki and Markku Vilppolainen.
\newblock Dimension and measures on sub-self-affine sets.
\newblock {\em Monatsh. Math.}, 161(3):271--293, 2010.

\bibitem{Kalinin11}
Boris Kalinin.
\newblock Liv\v sic theorem for matrix cocycles.
\newblock {\em Ann. of Math. (2)}, 173(2):1025--1042, 2011.

\bibitem{Mummert06}
Anna Mummert.
\newblock The thermodynamic formalism for almost-additive sequences.
\newblock {\em Discrete Contin. Dyn. Syst.}, 16(2):435--454, 2006.

\bibitem{Petersen83}
Karl Petersen.
\newblock {\em Ergodic theory}, volume~2 of {\em Cambridge Studies in Advanced
  Mathematics}.
\newblock Cambridge University Press, Cambridge, 1983.

\bibitem{Ruelle79}
David Ruelle.
\newblock Ergodic theory of differentiable dynamical systems.
\newblock {\em Inst. Hautes \'Etudes Sci. Publ. Math.}, (50):27--58, 1979.

\bibitem{Ruelle82}
David Ruelle.
\newblock Repellers for real analytic maps.
\newblock {\em Ergodic Theory Dynamical Systems}, 2(1):99--107, 1982.

\bibitem{Sol98}
Boris Solomyak.
\newblock Measure and dimension for some fractal families.
\newblock {\em Math. Proc. Cambridge Philos. Soc.}, 124(3):531--546, 1998.

\bibitem{Wal82}
Peter Walters.
\newblock {\em An introduction to ergodic theory}, volume~79 of {\em Graduate
  Texts in Mathematics}.
\newblock Springer-Verlag, New York, 1982.

\bibitem{Zhang97}
Yingjie Zhang.
\newblock Dynamical upper bounds for {H}ausdorff dimension of invariant sets.
\newblock {\em Ergodic Theory Dynam. Systems}, 17(3):739--756, 1997.

\end{thebibliography}

\end{document}